\newtheorem{theorem}{Theorem}[section]
\newtheorem{lemma}[theorem]{Lemma}
\newtheorem{definition}[theorem]{Definition}
\newtheorem{remark}[theorem]{Remark}
\numberwithin{equation}{section}
\newcommand{\1}{\text{\bf 1}}    
\newcommand{\R}{\mathbb R}
\newcommand{\dis}{\displaystyle}
\newcommand{\mmmintone}[1]{{\dis{\int\kern -.38cm
-}}_{\kern-.21cm\substack{#1}}\;\;}
\newcommand{\mmmintwo}[2]{{\dis{\int\kern -.43cm
-}}_{\kern-.21cm\substack{#1}}^{\substack{#2}}\;\;}
\newcommand{\submint}{{\scriptstyle{\int\kern -.66em -}}}
\newcommand{\submintone}[1]{{\scriptstyle{\int\kern -.66em
-}}_{\scriptscriptstyle{\kern-.21em\substack{#1}}}}
\newcommand{\fracmint}{{\textstyle{\int\kern -.88em -}}}
\newcommand{\fracmintone}[1]{{\textstyle{\int\kern -.88em
-}}_{\scriptscriptstyle{\kern-.21em\substack{#1}}}\;}
\newcommand{\eps}{\varepsilon}
\newcommand{\om}{\omega}
\newcommand{\E}{\mathbb E}
\newcommand{\nada}[1]{}
\def\be{\begin{equation}}
\def\ee{\end{equation}}
\newcommand{\T}{\mathbb{T}}
\newcommand{\Z}{\mathbb{Z}}
\def\nn{\textnormal{nn}}
\def\bad{\textnormal{bad}}
\def\good{\textnormal{good}}
\newcommand{\llav}[1]{  \left\{#1\right\} }
\newcommand{\pic}[1]{  \left\langle #1\right\rangle }
\newcommand{\norm}[1]{  \left\|#1\right\| }
\newcommand{\pare}[1]{  \left(#1\right) }
\newcommand{\corch}[1]{  \left[#1\right] }
\newcommand{\abs}[1]{  \left|#1\right| }
\DeclarePairedDelimiter\ceil{\lceil}{\rceil}
\let\originalleft\left
\let\originalright\right
\renewcommand{\left}{\mathopen{}\mathclose\bgroup\originalleft}
\renewcommand{\right}{\aftergroup\egroup\originalright}
\def\defi{\mathrel{\mathop:}=}
\def\fide{=\mathrel{\mathop:}}
\begin{document}
\today

\vskip.5cm

\title[Thermodynamics and Young-Gibbs measures]
{Thermodynamics for
spatially inhomogeneous magnetization and Young-Gibbs measures}

\author{Alessandro Montino}
\address{Alessandro Montino, Gran Sasso Science Institute\newline
\indent L' Aquila, Italy}%\newline \indent
\email{alessandro.montino@gssi.infn.it}

\author{Nahuel Soprano-Loto}
\address{Nahuel Soprano-Loto, Gran Sasso Science Institute\newline
\indent L' Aquila, Italy}%\newline \indent
\email{nahuel.sopranoloto@gssi.infn.it}

\author{Dimitrios Tsagkarogiannis}
\address{Dimitrios Tsagkarogiannis,
University of Sussex \newline
\indent Brighton, U.K.}%\newline \indent
\email{D.Tsagkarogiannis@sussex.ac.uk}

\begin{abstract}
We derive thermodynamic functionals for
spatially inhomogeneous magnetization on a torus in the context of an Ising spin lattice model.
We calculate the corresponding free energy and pressure
(by applying an appropriate external field using a quadratic Kac potential)
and show that they are related via a modified Legendre transform.
The local properties of the infinite volume Gibbs measure, related to whether a macroscopic configuration is
realized as a homogeneous state or as a mixture of pure states, are also studied by constructing
the corresponding Young-Gibbs measures.
\end{abstract}

\maketitle

%%%%%%%%%%%%%%%%%%%%%%%
% INTRODUCTION
%%%%%%%%%%%%%%%%%%%%%%%

\section{Introduction}

In continuum mechanics, in order to describe the properties of a material, one studies a minimization
problem of a given free energy functional with respect to an appropriate order parameter.
The physical properties of the system are encoded in this functional which, in accordance with
the second law of thermodynamics, is a convex function.
Of particular interest is the case
when we are in the regime of phase transition between pure states, which corresponds to a linear
segment in the graph of the above functional with respect to the order parameter.
In such a case,
the solution of the minimization problem
can be realized as a fine mixture of the two pure phases of the system. 
This is the case of occurrence of {\it microstructures}, a phenomenon observed in materials with significant technological
implications.
The percentage of each phase in this mixture has been successfully 
described by the use of Young measures. 
For an overview, one can look at \cite{M99} and the references therein. 
On the other hand, from an atomistic viewpoint and at finite temperature, 
there is a well-developed rigorous theory of phase transitions. For example, in the case of the Ising model, each pure phase is described via an extremal Gibbs measure and mixtures via convex combinations of the extremal ones. 
In this paper, we connect the two descriptions and derive a macroscopic continuum mechanics theory for scalar order parameter starting from statistical mechanics.
In this context, we study the appearance of microstructures in our model by constructing {\it Young-Gibbs measures}, as they were introduced by Koteck\'y and Luckhaus in \cite{KL14} for the case of elasticity.
For more
analogues of Young measures in the analysis of the collective
behaviour in interacting particle systems,
see \cite{P12}.

To fix ideas, we consider the Ising model with nearest-neighbour ferromagnetic interaction as reference Hamiltonian. 
To allow spatially macroscopic inhomogeneous magnetization profiles, we have to patch together 
such
Ising models for each given macroscopic magnetization.
To obtain the desired profile, we can either do it by directly imposing a canonical constraint
or by adding an external magnetic field to the Hamiltonian. 
We follow the second strategy and implement it by using a Kac potential acting at an intermediate scale and
penalizing deviations out of an associated average magnetization
(in other words, fixing the magnetization in a weaker sense than the canonical constraint).
We study the Lebowitz-Penrose limit of the corresponding free energy and pressure and show equivalence of
ensembles. 
As a result, for every macroscopic magnetization, there is a unique external field that can produce it.
Note that this fact is not true for the nearest-neighbour Ising model in the phase transition regime at zero external field.
Indeed, thanks to the Kac term, we are able to fix a given value of the magnetization at large scales,
but this is still not possible at smaller ones.
In fact, what we observe in these smaller scales is the persistence of the two pure states of the Ising model with a percentage
determined by the overall macroscopic magnetization.

It is worth mentioning that, for the case of the canonical ensemble with homogeneous magnetization, 
the actual geometry of the location of the pure states has been investigated in the celebrated result of the construction of the Wulff shape for the Ising model, \cite{DKS92}. In an inhomogeneous set-up, the equivalent problem would be to further investigate
how such shapes corresponding to two neighbouring macroscopic points are connected, but this is a challenging question beyond the scope of
our paper.

To summarize, the presence of the Kac term in the Hamiltonian produces the phenomenon of microstructure
as a competition between the Ising factor, which prefers the spins aligned, and the long range averages, which tend to
keep the average fixed as
induced by the Kac term.
As a consequence, modulated patterns made out of the pure states are created and macroscopic values of the magnetization
are realized in this manner.
The percentage of each pure state in such a mixture is captured by the Young measure.
However, it would be desirable to study more detailed properties such as the
geometric shape of such structures. 
At zero temperature, there have been several studies at both the mesoscopic-macroscopic scale
(without claim of being exhaustive, we refer to \cite{M93, CO, ACO} for a rigorous analysis)
and the microscopic scale for lattice models, as in a recent series of works by Giuliani, Lebowitz and Lieb, see \cite{GLL09} and the references therein.
It would be of fundamental importance 
to develop such a theory in finite temperature 
as one would like to incorporate
fluctuation-driven phenomena. However, this is still beyond the available techniques.

The paper is organized as follows: in Section \ref{notation}, we present the model and the main theorems. The proof 
of the limiting free energy and pressure is given in Section \ref{sec3}.
This is a standard result that essentially follows after putting 
together the results for
the homogeneous case, which is also recalled in Appendix \ref{hom}.
In Section \ref{see}, we prove equivalence of ensembles. 
In Section \ref{sld}, as a corollary of the large deviations, 
we show that spin averages in domains larger than the Kac scale converge in probability to the fixed macroscopic configuration.
The second part of the paper deals with investigating what happens when we take such averages in domains smaller than the
Kac scale. 
We see that, in the phase transition regime, local averages converge in probability to averages with respect to a mixture of the pure states
which, in accordance to the theory of the deterministic case, we call Young-Gibbs measure.
The relevant proofs are given in Section \ref{sYG} with some details left for Appendix \ref{appB}. 

\section{Notation and  results}\label{notation}

Let $\T \defi  \left[-\frac{1}{2}, \frac{1}{2} \right)^d$ be the $d$-dimensional unit torus. 
For $q\in \mathbb N$, we consider a small scaling parameter $\eps$ of the form $2^{-q}$. In this case, 
$\lim_{\eps\rightarrow 0}$ stands for $\lim_{q\rightarrow \infty}$.
The microscopic version of ${\T}$ is the lattice $\Lambda_{\eps} \defi  \pare{\eps^{-1}{\T}}\cap{\mathbb Z}^d$.
For a non-empty subset $A\subset {\mathbb Z}^d$, let $\Omega_{A} \defi \llav{-1,1}^{A}$ be the set of configurations $\sigma$ in $A$ that gives the value of the spin $\sigma(x)\in\{-1,1\}$ in each lattice point $x\in A$. 
Whenever needed, we also use the notation $\sigma_A$.

Given a scalar function $\alpha\in{C}({\T},{\mathbb R})$, which plays the role of an inhomogeneous external field, we define the
Hamiltonian  $H_{\Lambda_{\eps},\gamma, \alpha}:\Omega_{\Lambda_{\eps}}\rightarrow {\mathbb R}$ as follows:\begin{align}\label{hamil}
H_{\Lambda_{\eps},\gamma, \alpha}(\sigma) \defi  H_{\Lambda_{\eps}}^{\nn}(\sigma)+
K_{\Lambda_{\eps},\gamma, \alpha}(\sigma).
\end{align}
The first part
$H_{\Lambda_\eps}^{\textnormal{nn}}:\Omega_{\Lambda_\eps}\rightarrow \R$ is defined by
\begin{align}\label{Hnn}
H_{\Lambda_\eps}^{\nn}(\sigma) \defi 
-\sum_{\substack{x,y\in\Lambda_{\eps} \\[0.05cm] x\sim y }}\sigma(x)\sigma(y),
\end{align}
where $x\sim y$ means that $x$ and $y$ are nearest-neighbour sites, assuming {\it periodic boundary conditions} in the box $\Lambda_\eps$.
The second part is
\begin{equation}\label{K}
K_{\Lambda_{\eps},\gamma,\alpha}(\sigma) \defi 
\sum_{x\in\Lambda_{\eps}}
(I^{\gamma}_x(\sigma)-
\alpha(\eps x))^2,
\end{equation}
where
\begin{align}\label{Igamma}
I^{\gamma}_x(\sigma) \defi \sum_{y\in\Lambda_\eps}J_{\gamma}(x,y)\sigma(y)
\end{align}
is an average of the configuration $\sigma$ around a vertex $x\in\Lambda_\eps$.
We introduce
another small parameter $\gamma>0$ 
and
the Kac interaction $J_{\gamma}:\Lambda_\eps\times \Lambda_\eps\rightarrow {\mathbb R}$ defined by
\begin{align}\label{JJJ}						
J_{\gamma}(x,y) \defi  \gamma^d\phi(\gamma(x-y)).
\end{align}
Here $\phi\in C^2({\mathbb R}^d, [0,\infty))$ is an even function
that vanishes outside the unit ball $\{ r\in\mathbb R^d:|r|< 1 \}$ 
and  integrates to $1$.
The difference $x-y$ appearing in the right-hand side of \eqref{JJJ} is a difference modulo $\Lambda_\eps$.
Hence, the second term enforces the averages of spin configurations to follow $\alpha$. 
Given \eqref{hamil}, the associated finite volume Gibbs measure is defined by
\begin{align}
\mu_{\Lambda_{\eps}, \gamma, \alpha}(\sigma) \defi  \frac{1}{Z_{\Lambda_{\eps},  \gamma, \alpha}}e^{-\beta H_{\Lambda_{\eps},\gamma, \alpha}(\sigma)},
\end{align}
where $Z_{\Lambda_{\eps},  \gamma, \alpha}$ is the normalizing constant.
Note that throughout this paper, we neglect  from the notation the dependence on $\beta$.

To study inhomogeneous magnetizations, we assume that locally in the macroscopic scale 
(i.e. the scale of the torus $\T$) we have obtained a given value of the
magnetization,
which can however vary slowly as we move from one point to another.
To describe what ``locally'' and ``varying slowly'' mean, we introduce an intermediate scale $l$ of the form $2^{-p}$, $p\in\mathbb N$.
Again, $\lim_{l\rightarrow 0}$ stands for $\lim_{p\rightarrow\infty}$.
Let $\{C_{l,1},\dots,C_{l,N_l}\}$ be the natural partition $\mathscr C_l$ of $\T$ into $N_l=l^{-d}$ cubes  of side-length $l$,
and let $\{\Delta_{\eps^{-1}l,1},\ldots,\Delta_{\eps^{-1}l,N_l}\}$ be its microscopic version, denoted by $\mathscr D_{\eps^{-1}l}$.
Its elements are given by
$\Delta_{\eps^{-1}l,i} \defi (\eps^{-1}C_{l,i})\cap \Z^d$ for every $i=1,\ldots,N_l$.
Making an abuse of notation, for every $i$, we identify the set $\Delta_{\eps^{-1}l,i}\subset \Z^d$ with the set $\cup_{x\in \Delta_{\eps^{-1}l,i}}
\Delta_{1}(x)$ in $\mathbb R^d$, where $\Delta_1(x)$ is the cube of size $1$ centered in $x$.
Note that $|\Delta_{\eps^{-1}l}|$ is the volume of the set $\Delta_{\eps^{-1}l}$, 
but also the cardinality of points in $\mathbb Z^{d}$ within the set $\Delta_{\eps^{-1}l}$.
Given $u\in{C}({\T},\pare{-1,1})$, let $u^{(l)}:\T\rightarrow(-1,1)$ be the piece-wise constant approximation of $u$ at scale $l$:
for all $r\in C_{l,i}$,
\begin{align}\label{ul}
u^{(l)}(r)=\bar u^{(l)}_i \defi \frac{1}{|C_{l}|}\int_{C_{l,i}}u(r')dr'.
\end{align}
Here $|C_{l}|=l^d$ denotes the volume of any of the cubes $C_{l,1},\ldots,C_{l,N_l}$.
For $A$ and $B$   non-empty subsets of ${\mathbb Z}^d$ such that $A\subset B$, and for $\sigma\in\Omega_B$, we define the average magnetization of $\sigma$ in $A$ by 
\begin{equation}\label{magn}
m_A(\sigma) \defi \frac{1}{\abs{A}}\sum_{x\in A}\sigma(x).
\end{equation}
For $n\in\mathbb N$, we define the set
\begin{align}
 I_n \defi \llav{\frac{2i-n}{n}:i\in{\mathbb Z}\cap\corch{0,n}}.
\end{align}
Observe that, under this definition, $I_{\abs{A}}$ is the set of all possible (discrete) values that $m_A$ can assume.
For $t\in [-1,1]$, let $\lceil t \rceil_n$ be the value in $I_{n}$ corresponding to the best approximation of 
$t$ from above: 
\begin{align}
\lceil t\rceil_n  \defi \min\llav{t'\in I_{n}:t'\geq t}.
\end{align}
Furthermore, we consider the set 
\begin{align}\label{set}
\Omega_{\Lambda_{\eps},l}(u) \defi \{\sigma\in\Omega_{\Lambda_{\eps}}:
m_{\Delta_{\eps^{-1}l,i}}(\sigma)=
\lceil u^{(l)}_i\rceil_{|\Delta_{\eps^{-1}l}|}, \ \ \forall i=1,\ldots,N_l\}
\end{align}
of all configurations whose locally averaged magnetization $m_{\Delta_{\eps^{-1}l,i}}$ is close to the average of
$u$ in the corresponding macroscopic
coarse grained box $C_{l,i}$ (see \eqref{ul}), for every $i=1\ldots,N_l$.
We have:

\begin{theorem}[Free energy and pressure]\label{ttfree}
For $u\in{C}({\T},\pare{-1,1})$ and $\alpha\in{C}({\T},{\mathbb R})$, we have
\begin{align}\label{free_energy}
\lim_{l \rightarrow 0}
\lim_{\gamma\to 0}
\lim_{\eps \rightarrow 0}-\frac{1}{\beta |\Lambda_{\eps}|} \log  {\sum_{\sigma \in \Omega_{\Lambda_\eps ,l}(u)} e^{-\beta H_{\Lambda_{\eps},\gamma, \alpha}(\sigma)}}=\int_{\T} \corch{ f_\beta (u(r)) +(u(r)-\alpha(r))^2}dr
\fide F_{\alpha}(u).
\end{align}
This limit gives the infinite volume free energy associated to the Hamiltonian \eqref{hamil}.
Here $f_\beta$ is the infinite volume free energy associated to the Hamiltonian \eqref{Hnn} (see Theorem \ref{tfree}).
Similarly, we obtain the infinite volume pressure
\begin{align}\label{pressure}
\lim_{\gamma \rightarrow 0} \lim_{\eps \rightarrow 0} \frac{1}{\beta |\Lambda_{\eps}| } \log Z_{\Lambda_{\eps},  \gamma, \alpha}=
-\min_{u \in {C}({\T},(-1,1))} \, F_{\alpha}(u)\fide P(\alpha).
\end{align}
Moreover, given $I_{ \alpha}: {C}({\T},\pare{-1,1})\rightarrow {\mathbb R}$ 
defined by
\begin{align}\label{I}
I_{\alpha}(u) \defi  F_{\alpha}(u)-\min_{v\in{C}({\T},(-1,1))}F_{\alpha}(v),
\end{align}
we obtain the following Large Deviations limit:
\begin{align}\label{LDlimit}
\lim_{l \rightarrow 0} \, \lim_{\gamma \rightarrow 0} \, \lim_{\eps \rightarrow 0} \, \frac{1}{\beta\abs{\Lambda_{\eps}}} \log \mu_{\Lambda_{\eps},  \gamma, \alpha}(\Omega_{\Lambda_\eps,l}(u))=-I_{\alpha}(u),
\end{align}
where the set $\Omega_{\Lambda_\eps,l}(u)$ is defined in \eqref{set}.
\end{theorem}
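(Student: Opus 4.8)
The plan is to prove the three displays in the order \eqref{free_energy}, \eqref{pressure}, \eqref{LDlimit}. The free energy identity is the real content; the pressure will follow from it by summing the free‑energy asymptotics over all coarse‑grained magnetization profiles, and \eqref{LDlimit} will be immediate once one writes $\mu_{\Lambda_\eps,\gamma,\alpha}(\Omega_{\Lambda_\eps,l}(u))$ as the ratio $\bigl(\sum_{\sigma\in\Omega_{\Lambda_\eps,l}(u)}e^{-\beta H_{\Lambda_\eps,\gamma,\alpha}(\sigma)}\bigr)/Z_{\Lambda_\eps,\gamma,\alpha}$. The single external input used throughout is the homogeneous free energy of Theorem \ref{tfree}, applied cube by cube.

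For \eqref{free_energy}, fix $l,\gamma$, take $\eps$ small enough that $\eps^{-1}l\in\mathbb N$ and $\gamma^{-1}\ll\eps^{-1}l$, and write $\bar m_i:=\lceil u^{(l)}_i\rceil_{|\Delta_{\eps^{-1}l}|}$ and $\bar\alpha^{(l)}_i:=|C_l|^{-1}\int_{C_{l,i}}\alpha$. I would first split $H^{\nn}_{\Lambda_\eps}$ into $\sum_i H^{\nn}_{\Delta_{\eps^{-1}l,i}}$ (the bonds inside each coarse cube) plus the bonds joining distinct cubes; the latter involve $O(\eps^{-(d-1)}l^{-1})=o(|\Lambda_\eps|)$ bonds and drop out after division by $\beta|\Lambda_\eps|$. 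For the lower bound on the limit, Jensen's inequality in each cube gives, for every $\sigma\in\Omega_{\Lambda_\eps,l}(u)$, $K_{\Lambda_\eps,\gamma,\alpha}(\sigma)\ge\sum_i|\Delta_{\eps^{-1}l,i}|\bigl(|\Delta_{\eps^{-1}l,i}|^{-1}\textstyle\sum_{x\in\Delta_{\eps^{-1}l,i}}(I^\gamma_x(\sigma)-\alpha(\eps x))\bigr)^2$; using that $\sum_{x\in\Delta_{\eps^{-1}l,i}}J_\gamma(x,y)=1+o_\gamma(1)$ away from $\partial\Delta_{\eps^{-1}l,i}$ (a Riemann‑sum estimate; only $O(\gamma^{-1}(\eps^{-1}l)^{d-1})$ sites $y$ lie near the boundary) together with the constraint $m_{\Delta_{\eps^{-1}l,i}}(\sigma)=\bar m_i$, the inner average equals $\bar m_i-\bar\alpha^{(l)}_i+o_\eps(1)+o_\gamma(1)$. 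Hence $K_{\Lambda_\eps,\gamma,\alpha}(\sigma)\ge|\Lambda_\eps|\sum_i l^d(\bar m_i-\bar\alpha^{(l)}_i)^2-(o_\eps(1)+o_\gamma(1))|\Lambda_\eps|$ uniformly in $\sigma$; combined with the decoupling of $H^{\nn}$ and Theorem \ref{tfree} with magnetization $\bar m_i$ in each cube, and then letting $\eps\to0$, $\gamma\to0$, $l\to0$ (using continuity of $f_\beta,u,\alpha$ and $\lceil t\rceil_n\to t$), this bounds the limit in \eqref{free_energy} below by $F_\alpha(u)$.

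The matching upper bound is the part I expect to be delicate, since it is where the geometry of phase coexistence enters. One must exhibit, for each cube $i$, enough configurations with $m_{\Delta_{\eps^{-1}l,i}}=\bar m_i$ whose energy is $\le|\Delta_{\eps^{-1}l,i}|\bigl(f_\beta(\bar m_i)+(\bar m_i-\bar\alpha^{(l)}_i)^2\bigr)(1+o(1))$. When $|\bar m_i|$ exceeds the spontaneous magnetization $m^\ast_\beta$ this is straightforward: in the one‑phase regime a fraction $1-o_\eps(1)$ (weighted by $e^{-\beta H^{\nn}}$) of the configurations of magnetization $\bar m_i$ satisfy $|\Delta_{\eps^{-1}l,i}|^{-1}\sum_x(I^\gamma_x-\bar m_i)^2=O(\gamma^d)$, so on these $K_{\Lambda_\eps,\gamma,\alpha}\le|\Delta_{\eps^{-1}l,i}|\bigl((\bar m_i-\bar\alpha^{(l)}_i)^2+o_\gamma(1)+o_\eps(1)\bigr)$, and Theorem \ref{tfree} counts them. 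When $|\bar m_i|<m^\ast_\beta$, $f_\beta(\bar m_i)$ (affine on $[-m^\ast_\beta,m^\ast_\beta]$) is realized by a separation into the two pure phases, but a \emph{macroscopic} separation would make $I^\gamma_x\approx\pm m^\ast_\beta$ and cost an extra $m^{\ast2}_\beta-\bar m_i^2$ per site in the Kac term. The way around this is to tile $\Delta_{\eps^{-1}l,i}$ with sub‑cubes of side $\delta\gamma^{-1}$, assign them magnetizations $+m^\ast_\beta$ and $-m^\ast_\beta$ (with $\pm$ boundary layers) in the proportions $\tfrac12(1\pm\bar m_i/m^\ast_\beta)$ in a periodic pattern, and count the interior configurations of each sub‑cube by Theorem \ref{tfree}. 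Since $\delta\gamma^{-1}$ is large the inter‑sub‑cube bonds cost only $O(|\Lambda_\eps|\gamma/\delta)$ and, by affinity and symmetry of $f_\beta$ on $[-m^\ast_\beta,m^\ast_\beta]$, the product of sub‑cube counts is $\exp\bigl(-\beta|\Delta_{\eps^{-1}l,i}|f_\beta(\bar m_i)(1+o(1))\bigr)$; moreover, since $\delta\gamma^{-1}$ is small compared with $\eps^{-1}l$, every Kac average sees the prescribed mixture and equals $\bar m_i+O(\delta)+o_\eps(1)$, so the Kac term is $|\Lambda_\eps|\bigl(\sum_i l^d(\bar m_i-\bar\alpha^{(l)}_i)^2+O(\delta)+o_\eps(1)\bigr)$. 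Sending $\eps\to0$, then $\gamma\to0$ (so $\delta\gamma^{-1}\to\infty$), then $\delta\to0$, then $l\to0$, the upper bound also tends to $F_\alpha(u)$; that the limits in each variable exist, and not just the corresponding bounds, follows from the same subadditivity as in the homogeneous case.

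For \eqref{pressure}, the lower bound is free: $Z_{\Lambda_\eps,\gamma,\alpha}\ge\sum_{\sigma\in\Omega_{\Lambda_\eps,l}(u)}e^{-\beta H_{\Lambda_\eps,\gamma,\alpha}(\sigma)}$ for every $l,u$, so \eqref{free_energy} gives $\lim_\gamma\lim_\eps\frac1{\beta|\Lambda_\eps|}\log Z_{\Lambda_\eps,\gamma,\alpha}\ge-F_\alpha(u)$, hence $\ge-\min_u F_\alpha(u)$. For the upper bound I would partition $\Omega_{\Lambda_\eps}$ according to the vector of cube magnetizations into $(|\Delta_{\eps^{-1}l}|+1)^{N_l}=e^{o(|\Lambda_\eps|)}$ sets, each of the form $\Omega_{\Lambda_\eps,l}(u)$; the uniform version of the upper bound above then gives $\frac1{\beta|\Lambda_\eps|}\log Z_{\Lambda_\eps,\gamma,\alpha}\le o(1)-\min_{\vec m}\sum_i l^d\bigl(f_\beta(m_i)+(m_i-\bar\alpha^{(l)}_i)^2\bigr)$, and as $\eps\to0$, $\gamma\to0$, $l\to0$ the right‑hand side tends to $-\int_\T\min_m\bigl(f_\beta(m)+(m-\alpha(r))^2\bigr)\,dr=-\min_{u\in C(\T,(-1,1))}F_\alpha(u)$, the last equality because $m\mapsto f_\beta(m)+(m-\alpha(r))^2$ is strictly convex with an interior minimizer depending continuously on $r$. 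Finally \eqref{LDlimit} follows by taking the iterated limit of $\frac1{\beta|\Lambda_\eps|}\log\mu_{\Lambda_\eps,\gamma,\alpha}(\Omega_{\Lambda_\eps,l}(u))=\frac1{\beta|\Lambda_\eps|}\log\sum_{\sigma\in\Omega_{\Lambda_\eps,l}(u)}e^{-\beta H_{\Lambda_\eps,\gamma,\alpha}(\sigma)}-\frac1{\beta|\Lambda_\eps|}\log Z_{\Lambda_\eps,\gamma,\alpha}$ and invoking \eqref{free_energy}, \eqref{pressure} (the second term carries no $l$): the limit is $-F_\alpha(u)+P(\alpha)=-\bigl(F_\alpha(u)-\min_v F_\alpha(v)\bigr)=-I_\alpha(u)$. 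The genuinely non‑routine point is the phase‑coexistence case of the upper bound in \eqref{free_energy}: realizing simultaneously the affine value $f_\beta$ on the coexistence interval and the minimal Kac penalty $(u-\alpha)^2$ — exactly the mechanism behind the microstructures studied later in the paper — forces the auxiliary phase separation at the intermediate scale $\delta\gamma^{-1}$, coarse enough that interfaces are cheap yet fine enough that the Kac averages still track the prescribed magnetization.
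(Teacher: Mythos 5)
Your overall plan agrees with the paper (decoupling of the Hamiltonian over coarse cubes, Jensen plus convexity of $f_\beta$ for the lower bound, $Z\ge\sum_{\Omega_{\Lambda_\eps,l}(u)}$ and a partition over profiles for \eqref{pressure}, and \eqref{LDlimit} as a ratio). Your upper bound in the phase-coexistence regime, however, takes a genuinely different route from the paper. You build an explicit $\pm m_\beta^*$ checkerboard at a scale $\delta\gamma^{-1}$, fine enough that the Kac average sees the mixture, coarse enough that interfaces are cheap, and you let $\delta\to0$ at the end. The paper instead introduces a block scale $L=L_\gamma$ with $\gamma L_\gamma\to 0$ (so $L\ll\gamma^{-1}$, not $\asymp\gamma^{-1}$), replaces $J_\gamma$ by a block-averaged kernel $J^{(L)}_\gamma$ with vanishing error \eqref{Kacerror1}, and then takes the \emph{constant} block profile $\tilde u_1\approx\cdots\approx\tilde u_N\approx u$ as the test configuration; by the normalization \eqref{normalized} the coarse-grained Kac cost is then exactly $(u-\alpha)^2$ and, crucially, the block partition functions are already $e^{-\beta L^d f_\beta(u)}$ by Theorem~\ref{tfree}, so the microstructure inside each block of side $L$ is never made explicit and never seen by the coarse-grained Kac kernel. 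Your construction has the virtue of exposing the microstructure mechanism directly, but pays the price of an extra parameter $\delta$, an explicit bookkeeping of the $\pm$ proportions, and a separate argument in the single-phase regime $|\bar m_i|>m_\beta^*$, where you invoke an (unproved) concentration of $\frac{1}{|\Delta|}\sum_x(I^\gamma_x-\bar m_i)^2$ under the canonical Ising measure; the paper's coarse-graining makes this concentration step unnecessary, since the block-averaged Kac term only ever evaluates block magnetizations, which are fixed by the constraint.

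One point you dismiss too quickly is the existence of the inner limit $\lim_{\eps\to0}$ for fixed $\gamma$ and $l$, which the theorem asserts as an honest iterated limit. Your bounds give matching $\liminf$ and $\limsup$ only after all outer limits (and your auxiliary $\delta\to0$) are taken; for a fixed $\gamma$ the discrepancy between your upper and lower bounds is $O(\delta)+O(\gamma/\delta)$, which cannot be made to vanish. Saying this ``follows from the same subadditivity as in the homogeneous case'' papers over a real issue: the Kac interaction is not additive over sub-boxes, and the paper devotes Step~1 and Lemma~\ref{AA} to controlling precisely the $O(\gamma^{-d}2^{-q})$ corrections coming from the long-range term (both in the doubling argument and in the $\lceil\cdot\rceil$-discretization step). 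Without that, you have established $\lim_{l}\lim_{\gamma}\limsup_\eps=\lim_{l}\lim_{\gamma}\liminf_\eps=F_\alpha(u)$ but not the stated iterated limit. This is a fillable gap, but it is the one ingredient of the paper's proof that you cannot inherit verbatim from the homogeneous case.
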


The proof is given in Section \ref{sec3}. 

\begin{remark}\label{fixing_u}
The minimization problem in
Theorem \eqref{ttfree} can be easily solved;
indeed, since $f_\beta$ is convex, symmetric with respect to the origin and $\lim_{t \rightarrow \pm 1} f_\beta'(t)=\pm \infty$,
the associated Euler-Lagrange equation
\begin{equation}\label{EL}
f_\beta'(u)+2(u-\alpha)=0
\end{equation}
has a unique solution
$u \defi \tilde u(\alpha)$ for every number $\alpha\in\mathbb R$.
On the other hand, for a given $u\in (-1,1)$, if we choose 
$\tilde\alpha(u) \defi u+\frac 12 f_\beta'(u)$, then we can say that
the Hamiltonian $H_{\Lambda_\eps,\gamma, \alpha}$ 
with $\alpha=\tilde\alpha(u)$ fixes the magnetization profile $u$ in the sense of large deviations.
The same is true point-wisely for functions, namely, $x\mapsto \tilde u(\alpha(x))$ is the minimizer of $F_{\alpha}$ in $C(\T,(-1,1))$.
\end{remark}

In Remark \ref{fixing_u}, we have established a 
relation between a fixed macroscopic magnetization $u$ and the way to obtain it by imposing an appropriate external field
$\tilde\alpha(u)$ via a grand canonical ensemble with Hamiltonian $H_{\Lambda_\eps,\gamma, \tilde \alpha (u)}$. 
There is, however, an important difference with respect to the case of the Ising model with homogeneous external magnetic field:
in the case of homogeneous magnetization, the correspondence between values of the external field and values of the 
magnetization is not one-to-one due to the fact that $f_\beta$ is constant on the interval $[-m_\beta,m_\beta]$ (to be specified later). 
On the contrary, in our model, we obtain such an one-to-one correspondence because of the presence of the Kac term 
which, acting at an intermediate scale,
assigns a value to the magnetization according to the external field.
This is manifested by a new quadratic term appearing in the free energy.

In the following theorem, 
we prove a duality relation between the free energy that corresponds to the Ising part of the Hamiltonian
\eqref{Hnn}
and the pressure $P(\alpha)$, obtained 
through a modified Legendre transform with the external field action given by \eqref{K}. 

\begin{theorem}[Equivalence of ensembles] \label{equivalence}
For $\alpha \in {C}\pare{{\T}, \mathbb{R}}$, the following identity holds:
\begin{align} \label{equivalence_1}
P(\alpha)=\max_{u \in C({\T},\pare{-1,1})} { -F_{\alpha}(u) }.
\end{align}
Conversely, for $u\in{C}\pare{{\T},\pare{-1,1}}$,
\begin{align} \label{equivalence_2}
\int_{\T} f_\beta(u(r))dr = \max_{\alpha \in {C}\pare{{\T},{\mathbb R}}} \bigg\{-P(\alpha)  -\int_{\T} (u(r)-\alpha(r))^2 dr\bigg\}.
\end{align}
\end{theorem}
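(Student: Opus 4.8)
The plan is to derive both identities directly from Theorem~\ref{ttfree} --- concretely from the formula \eqref{pressure} for $P(\alpha)$ --- together with the explicit minimizer of $F_\alpha$ exhibited in Remark~\ref{fixing_u}; no new large-deviation or statistical-mechanics input is needed, the statement being essentially convex-duality bookkeeping once Theorem~\ref{ttfree} is in hand. The first identity \eqref{equivalence_1} is just a rewriting of \eqref{pressure}: there $P(\alpha)=-\min_{u}F_\alpha(u)=\max_{u}\{-F_\alpha(u)\}$, so there is nothing further to prove. For \eqref{equivalence_2} I would substitute $-P(\alpha)=\min_{v\in C(\T,(-1,1))}F_\alpha(v)=\min_{v}\int_\T[f_\beta(v(r))+(v(r)-\alpha(r))^2]\,dr$ into the right-hand side, recasting it as
\[
\max_{\alpha\in C(\T,\R)}\Big\{\min_{v\in C(\T,(-1,1))}\int_\T\big(f_\beta(v(r))+(v(r)-\alpha(r))^2\big)\,dr-\int_\T(u(r)-\alpha(r))^2\,dr\Big\},
\]
and then establish the two inequalities separately.

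For the upper bound, I would fix $\alpha\in C(\T,\R)$ and use $v=u$ as a competitor in the inner minimization, which is legitimate since $u\in C(\T,(-1,1))$; the two quadratic integrals cancel and the bracket is $\le\int_\T f_\beta(u(r))\,dr$. Taking the supremum over $\alpha$ gives ``$\le$''.

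For the lower bound, I would plug in the specific field $\alpha=\tilde\alpha(u):=u+\tfrac12 f_\beta'(u)$ from Remark~\ref{fixing_u}. One first checks admissibility, $\tilde\alpha(u)\in C(\T,\R)$: since $\T$ is compact, $u(\T)$ is a compact subset of $(-1,1)$, on which $f_\beta'$ is continuous, so $\tilde\alpha(u)$ is continuous. By Remark~\ref{fixing_u} the pointwise map $x\mapsto\tilde u\big(\tilde\alpha(u)(x)\big)$ equals $u$ --- this is exactly what the Euler--Lagrange equation \eqref{EL} gives when $\alpha=\tilde\alpha(u)$ --- so $u$ itself minimizes $F_{\tilde\alpha(u)}$; hence the inner minimum equals $F_{\tilde\alpha(u)}(u)=\int_\T\big(f_\beta(u)+(u-\tilde\alpha(u))^2\big)$, and subtracting $\int_\T(u-\tilde\alpha(u))^2$ leaves precisely $\int_\T f_\beta(u(r))\,dr$. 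This yields ``$\ge$'' and, together with the upper bound, shows the supremum is attained (so ``$\max$'' is justified) and equals $\int_\T f_\beta(u(r))\,dr$.

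There is no genuinely hard step here: \eqref{equivalence_2} says that $\int_\T f_\beta(\cdot)$ and $P$ are conjugate with respect to the quadratic penalty $\int_\T(u-\alpha)^2$ rather than the usual linear pairing --- a ``modified Legendre transform'' --- and all the analytic content is already packaged into Theorem~\ref{ttfree}. The only two points deserving an explicit line are (i) that the minimum defining $P(\alpha)$ is actually attained, and attained by the pointwise map $\tilde u\circ\alpha$: this rests on $f_\beta$ being convex and the extra quadratic term rendering $F_\alpha$ strictly convex, so that \eqref{EL} is uniquely solvable even on the flat segment $[-m_\beta,m_\beta]$ of $f_\beta$ (precisely the one-to-one correspondence emphasized after Remark~\ref{fixing_u}); and (ii) the admissibility check $\tilde\alpha(u)\in C(\T,\R)$ carried out above.
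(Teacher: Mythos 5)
Your proof is correct, and for the key inequality $\int_\T f_\beta(u(r))\,dr \ge -P(\alpha)-\int_\T(u(r)-\alpha(r))^2\,dr$ it takes a genuinely different (and shorter) route than the paper. You derive it purely from the variational formula \eqref{pressure}: since $-P(\alpha)=\min_{v}F_\alpha(v)\le F_\alpha(u)$, subtracting $\int_\T(u-\alpha)^2$ leaves exactly $\int_\T f_\beta(u)$. The paper instead returns to the partition functions, writing $\sum_{\sigma\in\Omega_{\Lambda_\eps,l}(u)}e^{-\beta H^{\nn}_{\Lambda_\eps}(\sigma)}\le Z_{\Lambda_\eps,\gamma,\alpha}\sum_{\sigma\in\Omega_{\Lambda_\eps,l}(u)}e^{\beta K_{\Lambda_\eps,\gamma,\alpha}(\sigma)}$, taking $-\tfrac{1}{\beta|\Lambda_\eps|}\log$ and passing to the limit, which requires establishing two auxiliary limits (the analogues of \eqref{free_energy} with $K$ removed or with $H^{\nn}$ removed) that your argument never needs. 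Your convex-duality route buys brevity and makes the ``modified Legendre transform'' structure transparent, at the price of leaning entirely on the already-proved Theorem \ref{ttfree}; the paper's argument re-exhibits the statistical-mechanical origin of the inequality by going back to the ensembles themselves, which is more in the spirit of the theorem's name but strictly more work. Your lower bound (choosing $\alpha=\tilde\alpha(u)$ so that $u$ minimizes $F_{\tilde\alpha(u)}$, whence the quadratic terms cancel) coincides with the paper's argument for that direction.
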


\medskip

The proof is given in Section \ref{see}.
As we mentioned before, the Kac potential acts at an intermediate scale $\gamma^{-1}$ and tends to fix
the average of the spin values in any box larger than $\gamma^{-1}$.
To state this result properly, we recall the empirical magnetization defined in \eqref{magn} and, with a slight abuse of notation, we extend it to a function
from $\T$ to $[-1,1]$ given by
$r\mapsto m_{B_{R}(\eps^{-1}r)}$
in such a way that it is constant in each small cube of side-length $\eps$.
Here $B_R(x)$ is the ball of radius $R$ with center $x$, taking into consideration the periodicity in  $\Lambda_\eps$.
The first result asserts that, for $\alpha\in C(\T,\R)$ and $R_\gamma\gg \gamma^{-1}$, empirical averages 
converge in probability to the magnetization profile $u=\tilde u(\alpha)$.
Formally, we define the test operator $L_{\om,g}:L^1(\T,[-1,1])\rightarrow \R$, depending on a function $\om\in C(\T,\R)$ and on a Lipschitz function $g:[-1,1]\rightarrow \R$, by
\begin{align}\label{okm}
L_{\om,g}(u) \defi \int_{\T}\om(r)g(u(r))dr.
\end{align}
Under this definition, the following theorem asserts that the operator applied to the empirical average
\begin{equation}\label{okm1}
L_{\om,g}(m_{B_{R}(\eps^{-1}\cdot)})=
\eps^{d}\sum_{x\in\Lambda_{\eps}}\omega(\eps x)g(m_{B_{R}(x)}(\sigma))
\end{equation}
converges to $L_{\om,g}(u)$ 
in $\mu_{\Lambda_\eps,\gamma,\alpha}$-probability.
Note that this convergence is a bit different than the usual convergence in probability, 
since the measure $\mu_{\Lambda_\eps,\gamma,\alpha}$ changes as $\eps\to 0$.

\begin{theorem}\label{thmcor}
Let $u \in{C}\pare{{\T},(-1,1)}$
and choose $\alpha \defi \tilde\alpha(u)$ as in Remark \ref{fixing_u}.
Then, for $L_{\om,g}$ given in \eqref{okm}, $R_\gamma\gg \gamma^{-1}$
and $\delta>0$, we have
\begin{align}\label{inmeasure}
\lim_{\gamma\rightarrow 0}\lim_{\eps\rightarrow 0}\mu_{\Lambda_{\eps},  \gamma, \alpha}\Big(  |
L_{\om,g}(m_{B_R(\eps^{-1}\cdot)})-L_{\om,g}(u)
 |>\delta\bigg)=0.
\end{align}
\end{theorem}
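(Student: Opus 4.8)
The plan is to replace the empirical ball averages $m_{B_R(x)}(\si)$ by local averages of the Kac field $I^\ga_\cdot(\si)$ --- at a cost that is uniformly small precisely because $R_\ga\gg\ga^{-1}$ --- and then to prove that, under $\mu_{\La_\eps,\ga,\a}$, the Kac field concentrates in $L^2$ on the profile $u=\ti u(\a)$ of Remark~\ref{fixing_u}. For the first reduction, observe that $J_\ga$ is a probability kernel of range $\ga^{-1}$, so that summation by parts gives $|B_R(x)|^{-1}\sum_{y\in B_R(x)}I^\ga_y(\si)=\sum_{z\in\La_\eps}w_x(z)\,\si(z)$ with $\sum_z w_x(z)=1$ and with $w_x$ equal to $|B_R|^{-1}\1_{B_R(x)}$ off a shell around $\pa B_R(x)$ of relative volume $O(\ga^{-1}/R_\ga)$; since $|\si|\le 1$, this yields $\big|m_{B_R(x)}(\si)-|B_R(x)|^{-1}\sum_{y\in B_R(x)}I^\ga_y(\si)\big|\le C\ga^{-1}/R_\ga$, uniformly in $x$, $\si$ and $\eps$. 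Combining this with the Lipschitz bound for $g$, with the continuity of $u$ and $\eps R_\ga\to 0$ (which lets one replace $|B_R(x)|^{-1}\sum_{y\in B_R(x)}u(\eps y)$ by $u(\eps x)$), with the Riemann--sum convergence $\eps^d\sum_{x\in\La_\eps}\om(\eps x)g(u(\eps x))\to L_{\om,g}(u)$, and with the elementary estimate $|B_R|^{-1}\sum_{x\in B_R(y)}|\om(\eps x)|\le\|\om\|_\infty$ obtained after interchanging the two sums, I would arrive at
\[
\big|L_{\om,g}(m_{B_R(\eps^{-1}\cdot)})-L_{\om,g}(u)\big|\ \le\ C(\om,g)\Big(\tfrac{\ga^{-1}}{R_\ga}+o_\eps(1)\Big)+C(\om,g)\Big(\eps^d\!\!\sum_{y\in\La_\eps}\big(I^\ga_y(\si)-u(\eps y)\big)^2\Big)^{1/2}.
\]
Hence, by Markov's inequality, the theorem will follow from the key estimate
\[
\lim_{\ga\to0}\ \limsup_{\eps\to0}\ \E_{\mu_{\La_\eps,\ga,\a}}\Big[\eps^d\!\!\sum_{y\in\La_\eps}\big(I^\ga_y(\si)-u(\eps y)\big)^2\Big]=0 .
\]

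To prove the key estimate I would use that $u=\ti u(\a)$ is the unique minimizer of $F_\a$ and that $\a-u=\tfrac12 f_\b'(u)$ (Remark~\ref{fixing_u}). Expanding the square, $\eps^d\sum_y(I^\ga_y-u(\eps y))^2$ equals $\eps^d K_{\La_\eps,\ga,\a}(\si)+2\eps^d\sum_y(\a(\eps y)-u(\eps y))\,I^\ga_y(\si)$ plus a deterministic term converging to $\int_\T(u^2-\a^2)$, so it remains to identify the $\mu$-expectations of the first two pieces. For these I would differentiate the free energy of Theorem~\ref{ttfree}: inserting a coupling constant $\la$ in front of the Kac term gives the finite--volume identity $\pa_\la\big|_{\la=1}\big(-\tfrac1{\b|\La_\eps|}\log Z^{(\la)}_{\La_\eps,\ga,\a}\big)=\eps^d\,\E_{\mu_{\La_\eps,\ga,\a}}[K_{\La_\eps,\ga,\a}]$, and replacing $\a$ by $\a+t\psi$ gives $\tfrac{d}{dt}\big|_{0}\big(-\tfrac1{\b|\La_\eps|}\log Z_{\La_\eps,\ga,\a+t\psi}\big)=-2\eps^d\sum_y\psi(\eps y)\big(\E_{\mu_{\La_\eps,\ga,\a}}[I^\ga_y]-\a(\eps y)\big)$. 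The map $\la\mapsto-\tfrac1{\b|\La_\eps|}\log Z^{(\la)}$ is concave, and $t\mapsto-\tfrac1{\b|\La_\eps|}\log Z_{\La_\eps,\ga,\a+t\psi}$ is concave after subtracting the explicit quadratic $t^2\eps^d\sum_y\psi(\eps y)^2$; by Theorem~\ref{ttfree} these families converge, as $\eps\to0$ and then $\ga\to0$, to $\la\mapsto-\min_v\int_\T[f_\b(v)+\la(v-\a)^2]$ and to $t\mapsto-P(\a+t\psi)-t^2\int_\T\psi^2$ respectively, both of which are differentiable at $\la=1$, resp.\ $t=0$, by the envelope theorem together with uniqueness of the minimizer. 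Passing difference quotients to the limit (legitimate by concavity) then yields, in the iterated limit, $\eps^d\,\E_\mu[K_{\La_\eps,\ga,\a}]\to\int_\T(u-\a)^2$ and $\eps^d\sum_y\psi(\eps y)\,\E_\mu[I^\ga_y]\to\int_\T u\psi$ for every $\psi\in C(\T,\R)$. Taking $\psi=\a-u$ and substituting, the expectation of $\eps^d\sum_y(I^\ga_y-u(\eps y))^2$ tends to $\int_\T(u-\a)^2+2\int_\T(\a-u)u+\int_\T(u^2-\a^2)=0$, which is the key estimate. (Alternatively, the same concentration of the Kac field on $u$ should be extractable directly from the mesoscopic coarse--graining at the Kac scale underlying the proofs of Theorem~\ref{ttfree} and of the large--deviations bound~\eqref{LDlimit}, which is the route suggested by the placement of this result as a corollary of the large deviations.)

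Finally, putting the two pieces together: for $\d>0$ and $\ga$ small enough that $C(\om,g)\ga^{-1}/R_\ga<\d/4$, Markov's inequality applied to the reduction estimate gives that $\limsup_{\eps\to0}\mu_{\La_\eps,\ga,\a}\big(|L_{\om,g}(m_{B_R(\eps^{-1}\cdot)})-L_{\om,g}(u)|>\d\big)$ is bounded by a constant depending only on $\om,g,\d$ times $\limsup_{\eps\to0}\E_{\mu_{\La_\eps,\ga,\a}}[\eps^d\sum_y(I^\ga_y-u(\eps y))^2]$, whose right-hand side vanishes as $\ga\to0$ by the key estimate; this proves~\eqref{inmeasure}. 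The main obstacle is precisely the key estimate: on scales between $\ga^{-1}$ and $R_\ga$ a Gibbs configuration could a priori look like a coarse mixture of the two pure phases $\pm m_\b$, in which case the ball averages, and hence $g$ applied to them, would not converge to $g(u)$; what rescues the statement is the hypothesis $R_\ga\gg\ga^{-1}$ together with the fact that the Kac penalty forces any phase separation in a typical configuration below the Kac scale, so that the $I^\ga$-smoothing averages it out. The delicate technical point is the interchange of the limits $\eps\to0$, $\ga\to0$ with the $\la$- and $t$-derivatives of the log-partition function; this relies on the concavity of the finite--volume functionals in these linear parameters and on the differentiability of the limiting thermodynamic functions, and is of the kind I would relegate to the appendix.
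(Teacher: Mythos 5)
Your proposal is correct, but it takes a genuinely different route from the paper's. The paper proves the constant case first and then handles the inhomogeneous case by a coarse-graining argument at an intermediate scale $l$ (Section 5.2); the heart of the constant case is an \emph{exponential} concentration bound, Lemma~\ref{pedro}, obtained by a direct Hamiltonian comparison: on the bad set $\{|\{x:|I^\gamma_x-u|>\zeta\}|>c|\Lambda_\eps|\}$ the Kac penalty contributes at least $c\zeta^2|\Lambda_\eps|$, and the partition function is controlled via Theorem~\ref{tfree} and \eqref{pressure}. You instead prove an $L^2$ expectation bound $\E_{\mu_{\Lambda_\eps,\gamma,\alpha}}\big[\eps^d\sum_y(I^\gamma_y-u(\eps y))^2\big]\to 0$ by differentiating the finite-volume log-partition function in a Kac coupling constant $\lambda$ and in a linear perturbation $\alpha\to\alpha+t\psi$, then passing to the thermodynamic limit using convexity of $\log Z$ and the envelope theorem applied to the limiting variational formula, and you finish with Cauchy--Schwarz and Markov. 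Your reduction from $m_{B_{R_\gamma}}$ to the Kac averages via $R_\gamma\gg\gamma^{-1}$ matches the paper's \eqref{Iandone}--\eqref{main_choice_ofL}, though you should also keep the $s(\gamma)O(1)$ discretization error there. What the paper's route buys is the exponential decay itself, which it re-uses verbatim in the proof of Theorem~\ref{mainthm} (case $|u|>m_\beta$, via \eqref{spl}); your Markov bound suffices for Theorem~\ref{thmcor} but would not replace Lemma~\ref{pedro} downstream. What your route buys is that it treats the inhomogeneous case in one stroke, with no box-splitting or union bound. Two points you should make fully explicit if you develop this: (i) the $\lambda$-modified pressure formula $-\min_v\int[f_\beta(v)+\lambda(v-\alpha)^2]$ is not literally in Theorem~\ref{ttfree}, though the coarse-graining proof goes through for any fixed $\lambda>0$ with only notational changes, and you should say so; (ii) the interchange of $\eps\to0$, $\gamma\to0$ with the $\lambda$- and $t$-derivatives relies on concavity in those parameters together with differentiability of the limits, and on uniqueness of the minimizer $\tilde u(\alpha)$ from Remark~\ref{fixing_u}, as you note.
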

The proof is given in Section \ref{sld}.
As it will be evident in the proof, in the above case $R_{\gamma}\gg \gamma^{-1}$,
the test function $g$ is not relevant.

A different scenario is observed when considering a smaller scale $R$: the value of
the random sequence $m_{B_{R}(x)}(\sigma)$ may  oscillate and, as a consequence, 
its limiting value may not be just the average.
In this case,
we study more detailed properties of
the underlying microscopic magnetizations.
We refer to these as the ``microscopic'' spin statistics of the measure 
$\mu_{\Lambda_{\eps}, \gamma, \alpha}$ (as opposed to the ``macroscopic''
statistics given by large deviations).
More precisely, we investigate how the
limiting value $u(r)$ in \eqref{inmeasure} is realized 
in intermediate scales:
as a homogeneous state or as a mixture of the pure states, and how one can retain such an information
in the limit.
This is reminiscent of the theory of Young measures as applied to describe
microstructure; see \cite{M99} for an overview.
In fact, in order to describe it in our case, we will construct the appropriate Young measure.

\begin{definition}[Young measure]\label{youngdef}
A Young measure 
is a map
\begin{align*}
\nu:\T &\to \mathcal P([-1,1]) \\
r & \mapsto \nu(r)
\end{align*}
such that, for every continuous function $g:[-1,1] \rightarrow \mathbb{R}$, the map $r \mapsto \langle \nu(r), g \rangle$ is measurable. Here 
$\mathcal P([-1,1])$ is the space of probability measures on $[-1,1]$,
and $\langle \nu(r),g \rangle$ indicates the expected value of $g$ with respect to the probability $\nu(r)$.
\end{definition}

To state the main result, we need to recall some background.
For an external field $h\in{\mathbb R}$, let $H^{\nn}_{\Lambda_\eps, h}:\Omega_{\Lambda_{\eps}}\rightarrow {\mathbb R}$ be the Hamiltonian defined by
\begin{equation}\label{guante}
H^{\nn}_{\Lambda_\eps, h}
(\sigma) \defi  H_{\Lambda_\eps}^{\nn}\pare{\sigma}-h\sum_{x\in\Lambda_{\eps}}\sigma(x),
\end{equation}
and let $\mu^{\nn}_{\Lambda_\eps,h}$ be the associated finite volume measure
\begin{align}\label{Ising}
\mu^{\nn}_{\Lambda_\eps,h}(\sigma) \defi \frac{1}{Z_{\Lambda_\eps,h}^{\text{nn}}}e^{-\beta H_{\Lambda_\eps,h}^{\text{nn}}(\sigma)}.
\end{align}
It is known that the set $\mathcal G(\beta,h)$ of 
infinite volume Gibbs measures associated to \eqref{guante}
is a non-empty, weakly compact, convex set of probability measures on $\Omega_{\mathbb Z^d}$.
More specifically, in $d=2$ and for any pair $(\beta,h)$,
the set $\mathcal G(\beta,h)$ is the convex hull of two extremal elements $\mu^{\nn}_{h,\pm}$,
the infinite volume limits of \eqref{Ising} with $\pm$ boundary conditions.
Any non-extremal Gibbs measure can be uniquely expressed as a convex combination of
these two elements: if $G\in\mathcal G(\beta,h)$, then there exists unique $\lambda_G\in [0,1]$ such that
\begin{equation}\label{decomp}
G=\lambda_G \mu^{\nn}_{h,+} +(1-\lambda_G) \mu^{\nn}_{h,-}.
\end{equation}

We define the magnetization at the origin as the expectation
\begin{align}
\varphi(h) \defi \int \sigma_0\mu_h^{\nn}(d\sigma).
\end{align}
The function $\varphi:\R\rightarrow (-1,1)$ is odd, strictly increasing, continuous in every point $h\neq 0$, and satisfies
\begin{align}
\lim_{h\rightarrow \pm\infty}\varphi(h)=\pm 1.
\end{align}
There exists a critical value $\beta_c>0$ such that the limit
\begin{align}\label{mbeta}
m_\beta \defi \lim_{h\downarrow 0}\varphi(h)
\end{align}
is positive if and only if $\beta>\beta_c$; it is the so-called spontaneous magnetization.
Note that it also coincides with the magnetization associated to $\mu^{\nn}_{0,+}$:
$m_\beta=\int\sigma_0 \mu^{\nn}_{0,+}(d\sigma)$.
For $\beta\leq\beta_c$, we have $m_\beta=0$. In this case, for every $m\in (-1,1)$, there exist a unique value $h=h(m)\in\R$ such that $\varphi(h)=m$.
If $m_\beta>0$, the same is true for values of the magnetization such that $|m|>m_\beta$.
But, how about if $|m|\le m_\beta$?
This has been investigated in \cite{G79}, where the {\it canonical} infinite volume Gibbs measure has been
constructed.
As every magnetization $u\in [-m_\beta,m_\beta]$ can be uniquely written as a convex combination 
\begin{equation}\label{conv}
u=\lambda_u m_\beta-(1-\lambda_u)m_\beta, 
\end{equation}
with $\lambda_u\in [0,1]$,
then $u$ is the magnetization associated to the probability
\begin{align}\label{p1p1}
\lambda_u\mu^{\nn}_{0,+}+(1-\lambda_u)\mu^{\nn}_{0,-}.
\end{align}
Hence, although ``macroscopically" one observes the value $u$ of the magnetization, in intermediate 
(still diverging) scales, one observes mixtures of the $m_\beta$ and $-m_\beta$ phases with a
frequency given by $\lambda_{u}$.
\bigskip

The purpose of the next theorem is to investigate the above fact for inhomogeneous magnetizations,
namely by ``imposing" a macroscopic profile $u(r)$
in a grand canonical
fashion, as it is described in Remark \ref{fixing_u}. 
For low enough temperature and for $|u(r)|<m_{\beta}$, 
at large scales (beyond $\gamma^{-1}$), the system with Hamiltonian \eqref{hamil} tends to fix $u(r)$ while,
at smaller ones, it allows (large) fluctuations once their average over areas of order $\gamma^{-1}$ is compatible with
$u(r)$. Indeed, the result states that, at boxes of scale up to $\gamma^{-1}$, one of the
two pure phases $\pm m_\beta$ is observed while, at scales larger than $\gamma^{-1}$ we see $u(r)$.
To capture this phenomenon, we use the observable $L_{\om,g}$ given by \eqref{okm}.
With a slight abuse of notation, we can also view $L_{\om,g}$ as acting over
Young measures $\nu(r)\in\mathcal P([-1,1])$ as follows:
\begin{align}\label{okm2}
L_{\omega,g}(\nu) \defi \int_\T \om(r)\langle \nu(r),g\rangle dr.
\end{align}

\begin{theorem}[Parametrization by Young measures]\label{mainthm}
Let $u\in C(\T,(-1,1))$ and $\alpha=\tilde \alpha(u)\in C(\mathbb T,\mathbb R)$ be its associated external field
(given by the solution of \eqref{EL}). We have the following cases:
\begin{itemize}
\item[] \textbf{Case $R_\gamma\gg \gamma^{-1}$.}
For every $\delta>0$,
\begin{align}\label{first}
\lim_{\gamma\rightarrow 0} \lim_{\eps\rightarrow 0}
\mu_{\Lambda_\eps,\gamma,\alpha}(|L_{\om,g}(m_{B_{R_\gamma}(\eps^{-1}\cdot)})-L_{\om,g}(\nu_u)|>\delta)=0,
\end{align}
where the functional $L_{\om,g}$ is defined in \eqref{okm} and \eqref{okm2}, and the Young measure is given by
$\nu_{u}(r) \defi  \delta_{u(r)}$ for every $r\in \T$. 
Here $\delta_{u(r)}$ is the Dirac measure concentrated in $u(r)$. 
\medskip
\item[] \textbf{Case $R=O(1)$.}
Suppose $d=2$ and let $\beta>\log \sqrt 5$.
Then, for every $\delta>0$,
\begin{align}\label{second}
\lim_{\gamma\rightarrow 0} \lim_{\eps\rightarrow 0}
\mu_{\Lambda_\eps,\gamma,\alpha}(|L_{\om,g}(m_{B_{R}(\eps^{-1}\cdot)})-L_{\om,g}(\nu_{u,R})|>\delta)=0.
\end{align}
Here, for $r\in\T$ and $E\subset [-1,1]$ a Borel subset, the Young measure $\nu_{u,R}$ is given by
\begin{align}\label{nu2}
\nu_{u,R}(r)(E) \defi 
\begin{cases}
\mu^{\nn}_{h(u(r))}[m_{B_R(0)}\in E] & \text{ if }|u(r)|>m_\beta
\\[0.2cm]
(\lambda_{u(r)}\mu^{\nn}_{0,+}+(1-\lambda_{u(r)})\mu^{\nn}_{0,-})[m_{B_R(0)}\in E]
& \text{ if }|u(r)|\leq m_\beta
\end{cases},
\end{align}
where $\lambda_{u(r)}$ and $h(u(r))$ are given in \eqref{conv} and the discussion preceding it, respectively.
\medskip
\item[] \textbf{Case $1\ll R \ll \gamma^{-1}$.} Under the same hypothesis of the previous item ($d=2$ and $\beta>\log\sqrt 5$), for every $\delta>0$,
\begin{align}\label{third}
\lim_{R\rightarrow \infty}\lim_{\gamma\rightarrow 0} \lim_{\eps\rightarrow 0}
\mu_{\Lambda_\eps,\gamma,\alpha}(|L_{\om,g}(m_{B_{R}(\eps^{-1}\cdot)})-L_{\om,g}(\nu_{u})|>\delta)=0,
\end{align}
where
\begin{align}\label{nu3}
\nu_u(r) \defi 
\begin{cases}
\delta_{u(r)}, & \text{ if }|u(r)|>m_\beta
\\[0.2cm]
\lambda_{u(r)}\delta_{m_\beta}+(1-\lambda_{u(r)})\delta_{-m_\beta},
& \text{ if }|u(r)|\leq m_\beta
\end{cases}.
\end{align}
\end{itemize}
\end{theorem}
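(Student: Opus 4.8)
The plan is to prove the three cases separately, reducing everything to a combination of the large-deviation machinery of Theorem~\ref{ttfree}/Theorem~\ref{thmcor} and the known local structure of the nearest-neighbour Ising Gibbs states. The common strategy: since $L_{\om,g}$ is bounded and ``almost continuous'' in the empirical field, it suffices to show that, with $\mu_{\Lambda_\eps,\gamma,\alpha}$-probability tending to $1$, the empirical average $m_{B_R(\eps^{-1}\cdot)}$ (viewed through the test functional) is close to the predicted Young measure. The case $R_\gamma\gg\gamma^{-1}$ is exactly Theorem~\ref{thmcor} after noting $\nu_u(r)=\delta_{u(r)}$ gives $L_{\om,g}(\nu_u)=\int_\T\om(r)g(u(r))\,dr=L_{\om,g}(u)$; nothing new is needed there.

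For the case $R=O(1)$, the key point is a \emph{local limit / finite-volume decoupling} argument. First I would use the large-deviation result \eqref{LDlimit} together with Remark~\ref{fixing_u} to conclude that, after the $\eps\to0$ and $\gamma\to0$ limits, the measure $\mu_{\Lambda_\eps,\gamma,\alpha}$ conditioned on a mesoscopic box $\Delta_{\eps^{-1}l,i}$ of side-length $l$ (with $1\ll \gamma^{-1}\ll \eps^{-1}l$) looks, in that box, like the canonical Ising measure with magnetization $\approx u^{(l)}_i\approx u(r)$ for $r\in C_{l,i}$. Then I invoke the description from \cite{G79} recalled in the excerpt: the canonical infinite-volume Gibbs measure with magnetization $m$ equals $\mu^{\nn}_{h(m)}$ if $|m|>m_\beta$, and equals $\lambda_m\mu^{\nn}_{0,+}+(1-\lambda_m)\mu^{\nn}_{0,-}$ if $|m|\le m_\beta$. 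Restricting this infinite-volume measure to $m_{B_R(0)}\in E$ gives precisely $\nu_{u,R}(r)(E)$. To turn this into convergence of $L_{\om,g}(m_{B_R(\eps^{-1}\cdot)})$, I would partition $\T$ into the cubes $C_{l,i}$, write $L_{\om,g}$ as a sum over these cubes plus an error controlled by the modulus of continuity of $\om$ and $g\circ u$, and inside each cube apply a law of large numbers for the (conditioned) Ising field: the empirical average $|C_{l,i}|^{-1}\sum_{x}g(m_{B_R(x)}(\sigma))$ concentrates around its canonical-Gibbs expectation $\langle\nu_{u,R}(r),g\rangle$ because of the mixing / ergodicity of the extremal states $\mu^{\nn}_{h,\pm}$ and the fact that shifts of $m_{B_R(\cdot)}$ are a local, bounded, stationary family under each extremal state. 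The order of limits $\eps\to0$, then $\gamma\to0$ (and, for the sup over test functions, one takes $l\to0$ at the end, which is hidden inside the statement via the $\gamma\to0$ limit producing the macroscopic profile) must be respected throughout. The role of the hypothesis $\beta>\log\sqrt5$ is to guarantee $\beta>\beta_c$ in $d=2$ (so that the two-phase regime $|u(r)|\le m_\beta$ is non-trivial) and to have the full Aizenman–Higuchi type classification \eqref{decomp} available.

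For the case $1\ll R\ll\gamma^{-1}$, I would start from the measure $\nu_{u,R}$ of the previous case and let $R\to\infty$. The point is that under each extremal Ising state the block average $m_{B_R(0)}$ converges almost surely (ergodic theorem) to the corresponding magnetization: $m_{B_R(0)}\to m_\beta$ under $\mu^{\nn}_{0,+}$, $\to-m_\beta$ under $\mu^{\nn}_{0,-}$, and $\to\varphi(h(u(r)))=u(r)$ under $\mu^{\nn}_{h(u(r))}$ when $|u(r)|>m_\beta$. Hence $\nu_{u,R}(r)$ converges weakly to $\lambda_{u(r)}\delta_{m_\beta}+(1-\lambda_{u(r)})\delta_{-m_\beta}$ (resp.\ $\delta_{u(r)}$), i.e.\ to $\nu_u(r)$ as in \eqref{nu3}, and correspondingly $L_{\om,g}(\nu_{u,R})\to L_{\om,g}(\nu_u)$ by dominated convergence. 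Combining this with \eqref{second} and a triangle inequality, splitting $\delta$ into a ``statistical'' part handled by \eqref{second} at fixed large $R$ and a ``deterministic'' part $|L_{\om,g}(\nu_{u,R})-L_{\om,g}(\nu_u)|$ made small by taking $R$ large, yields \eqref{third}.

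I expect the main obstacle to be the rigorous justification, in the case $R=O(1)$, that conditioning the Kac-perturbed measure on a mesoscopic magnetization value genuinely produces the \cite{G79} canonical state \emph{locally} and that local averages concentrate: one has to control the interaction between neighbouring mesoscopic boxes (the nearest-neighbour Ising term couples them across box boundaries, and the Kac term couples averages over scale $\gamma^{-1}$), show that boundary effects are $o(|\Delta_{\eps^{-1}l}|)$, and transfer the equilibrium description past the three nested limits $\eps\to0$, $\gamma\to0$, $l\to0$ in the correct order. This is where the bulk of the work (and the details deferred to Appendix~\ref{appB}) should lie; the remaining steps are softer ergodic-theoretic and continuity arguments.
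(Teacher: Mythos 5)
Your handling of the first and third cases matches the paper: Case $R_\gamma\gg\gamma^{-1}$ is indeed just Theorem~\ref{thmcor}, and the derivation of \eqref{third} from \eqref{second} by the ergodic theorem $\E_\mu[g(m_{B_R})]\to g(\E_\mu[\sigma(0)])$ plus a triangle inequality is exactly the paper's Subsection~\ref{subsec:third}. The passage to general $u,\alpha,\omega$ from the constant case via mesoscopic partitioning is also the paper's route (Subsection~\ref{sec5.2}).

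The gap is in Case $R=O(1)$ with $|u(r)|\le m_\beta$, which is where essentially all the work lies. You propose to (a) condition the Kac-perturbed measure on a mesoscopic magnetization, identify the resulting local state with the canonical Gibbs measure of \cite{G79}, and (b) conclude concentration of the local empirical $\frac{1}{|C_{l,i}|}\sum_x g(m_{B_R(x)})$ ``because of the mixing/ergodicity of the extremal states $\mu^{\nn}_{0,\pm}$.'' Step (b) does not follow. In the phase-transition regime the relevant state is the non-ergodic mixture $G_u=\lambda_u\mu^{\nn}_{0,+}+(1-\lambda_u)\mu^{\nn}_{0,-}$; ergodicity of each pure phase only gives that the empirical average converges to $\E_{\mu^{\nn}_{0,+}}[g(m_{B_R})]$ on one event and to $\E_{\mu^{\nn}_{0,-}}[g(m_{B_R})]$ on its complement, not to the convex combination $\langle\nu_{u,R}(r),g\rangle$. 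To get the convex combination one must show that, inside a single mesoscopic box, the two pure phases coexist spatially with relative volume fractions $\lambda_u$ and $1-\lambda_u$; this is precisely what is nontrivial and what ergodicity alone cannot deliver. The paper's proof supplies a concrete mechanism: it introduces a scale $K$ (and an intermediate $L\sim\gamma^{-1+a}$ to freeze the Kac term by conditioning on coarse-grained magnetizations), partitions into $K$-boxes, and classifies boxes as good/bad via the notion of a spin circuit (Definitions~\ref{circuit} and \ref{defbad}); Lemma~\ref{prop_lem_1} then provides exponential bounds on the density of bad boxes and on the deviation of the fraction of $(\zeta,+)$-good boxes from $\lambda_u$, the latter crucially using the Kac-imposed average magnetization through \eqref{exponential_estimate}. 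Proving the bad-box density bound itself requires the two sublemmas \ref{lem1} (a Chebyshev/decay-of-correlations argument for boxes with circuits) and \ref{lem2} (a random-cluster / Edwards–Sokal argument with stochastic domination by a Bernoulli measure for boxes without circuits). None of this machinery appears in your sketch, and the step you flag as the ``main obstacle'' is the heart of the proof rather than a detail that can be deferred.

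A secondary but real error: you attribute the hypothesis $\beta>\log\sqrt5$ to ``guaranteeing $\beta>\beta_c$'' and to making the Aizenman--Higuchi classification available. In fact $\beta_c(2)=\tfrac12\log(1+\sqrt2)\approx 0.44$, strictly smaller than $\log\sqrt5\approx 0.80$, so the hypothesis is strictly stronger than $\beta>\beta_c$; it is used in Lemma~\ref{lem2} to make $3(1-\rho)<1$ with $\rho=\tfrac{1-e^{-2\beta}}{1+e^{-2\beta}}$, i.e.\ to make a Peierls-type union bound over self-avoiding dual paths summable, which is what defeats the ``no circuit'' event under the Bernoulli-dominated random-cluster measure. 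The classification \eqref{decomp} itself holds for all $\beta>\beta_c$ and is not what forces the numerical threshold.

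Finally, for the case $|u|>m_\beta$ of $R=O(1)$ you did not sketch an argument; the paper handles it by completing the square in the Kac term to reconstruct the external field $h=f'_\beta(u)$ (identity \eqref{cor_ext}), controlling the density of ``bad Kac averages'' via Lemma~\ref{pedro}, and then invoking a standard Ising large-deviation bound for local averages under $\mu^{\nn}_{\Lambda_\eps,h}$. That reduction to a tilted nn Ising measure is also not the ``conditioning produces the canonical state'' picture you describe, though it is closer in spirit.
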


\noindent
The case $R_\gamma \gg \gamma^{-1}$ is only a restatement of Theorem \ref{thmcor}.
The proof of the case $R=O(1)$ is given in Section \ref{sYG}.
The case $1\ll R \ll \gamma^{-1}$ follows as a corollary of the previous case
and it is briefly presented in Subsection \ref{subsec:third}.

\bigskip

\section{Proof of Theorem \ref{ttfree}}\label{sec3}

In this section we prove the limits \eqref{free_energy} and \eqref{pressure}.
Then, the limit in \eqref{LDlimit} is a direct consequence.

\subsection{Proof of \eqref{free_energy}}

We first prove it for $\alpha$ and $u$ constant and then for the general case.

\subsubsection{Constant $u$ and $\alpha$.}
For $u\in I_{\abs{\Lambda_\eps}}$, we introduce the finite volume free energy associated to the Hamiltonian \eqref{hamil} by
\begin{align}\label{finvol}
F_{\Lambda_{\eps}, \gamma, \alpha}\pare{u} \defi  -\frac{1}{\beta\abs{\Lambda_{\eps}}}\log{\sum_{\substack{ \sigma\in\Omega_{\Lambda_{\eps}} \\[0.05cm] m\pare{\sigma}=u }} e^{-\beta H_{\Lambda_{\eps},\gamma, \alpha}\pare{\sigma}} }.
\end{align}
For a generic $u\in (-1,1)$ we prove that
\begin{align}\label{tfc}
\lim_{\gamma\rightarrow 0}\lim_{\eps\rightarrow 0}F_{\Lambda_{\eps}, \gamma, \alpha}\pare{\ceil*{u}_{\Lambda_\eps}}
=f_\beta(u)+\pare{u-\alpha}^2.
\end{align}
We proceed in three steps:
we first show that the limit $\lim_{\eps\rightarrow 0}F_{\Lambda_{\eps}, \gamma, \alpha}\pare{\ceil*{u}_{\Lambda_\eps}}$ exists for every $\gamma$;
we continue with a coarse-graining approximation and
conclude establishing lower and upper bounds.

\medskip

{\underline{\it Step 1: existence of the limit $\lim_{\eps\rightarrow 0}F_{\Lambda_{\eps}, \gamma, \alpha}\pare{\ceil*{u}_{\Lambda_\eps}}$ for fixed $\gamma>0$.}}
Since $\eps=2^{-q}$, with a slight abuse of notation we denote the volume by $\Lambda_q$ in order
to keep track of the dependence on $q$.
We have $|\Lambda_{q+1}|=2^d |\Lambda_q|$.
We also define the sequence of magnetizations $u_q \defi \ceil*{u}_{\Lambda_q}$.
It suffices to prove that the sequence $(F_{\Lambda_q, \gamma,\alpha}(u_q))_q$ is bounded below and that the inequality
\begin{align}\label{tt4}
F_{\Lambda_{q+1}, \gamma,\alpha}(u_{q+1})\leq F_{\Lambda_{q},\gamma,\alpha}(u_{q})
+a_q
\end{align}
holds for every $q$, where $(a_q)_q$ is 
a sequence of non-negative numbers such that $\sum_q a_q<\infty$.

The fact that the sequence $(F_{\Lambda_q, \gamma,\alpha}(u_q))_q$ is bounded from below follows from the inequalities
\begin{align}\label{tt5}
\frac{1}{\beta\abs{\Lambda_q}}\log\sum_{\substack{\sigma\in\Omega_{\Lambda_q} \\ m_{\Lambda_q}(\sigma)=u_q}}e^{-\beta H_{\Lambda_q,\gamma,\alpha}(\sigma)}
\leq \frac{1}{\beta\abs{\Lambda_q}}\log\sum_{\substack{\sigma\in\Omega_{\Lambda_q} \\ m_{\Lambda_q}(\sigma)=u_q}}e^{-\beta H_{\Lambda_q}^{\nn}(\sigma)}
\leq \frac{1}{\beta\abs{\Lambda_q}}\log\sum_{\sigma\in\Omega_{\Lambda_q} }e^{-\beta H_{\Lambda_q}^{\nn}(\sigma)}
\end{align}
and the fact that the right hand side of \eqref{tt5} converges to the pressure with zero external field;
see Theorem \ref{tfree}.
To show \eqref{tt4}, we write:
\begin{eqnarray}
F_{\Lambda_{q+1}, \gamma,\alpha}(u_{q+1})-F_{\Lambda_{q},\gamma,\alpha}(u_q)
& = &
\corch{F_{\Lambda_{q+1}, \gamma,\alpha}(u_{q+1})-F_{\Lambda_{q+1}, \gamma,\alpha}(u_q)}\nonumber\\
&&
+\corch{F_{\Lambda_{q+1}, \gamma,\alpha}(u_q)-F_{\Lambda_{q}, \gamma,\alpha}(u_q)}.
\end{eqnarray}
To find an upper bound for 
\begin{align}
F_{\Lambda_{q+1}, \gamma,\alpha}(u_q)-F_{\Lambda_{q},\gamma,\alpha}(u_q),
\end{align}
we use the same sub-additive argument leading to \eqref{salmoncita} in the proof of Theorem \ref{tfree}.
Indeed, repeating the argument appearing there, it can be proved that
\begin{align}
F_{\Lambda_{q+1}, \gamma,\alpha}(u_q)-F_{\Lambda_{q},\gamma,\alpha}(u_q)\leq C \gamma^{-d} 2^{-q},
\end{align}
where $C$ is independent of $q$.

On the other hand, to estimate
\begin{align}\label{tt6}
F_{\Lambda_{q+1}, \gamma,\alpha}(u_{q+1})-F_{\Lambda_{q+1}, \gamma,\alpha}(u_q),
\end{align}
we use the following continuity lemma whose proof is also given in Subsection \ref{add}.

\begin{lemma}\label{AA}
If $t$ and $t'$ are consecutive elements of $I_{|\Lambda_q|}$, then
\begin{align}\label{BB}
\abs{F_{\Lambda_q, \gamma,\alpha}(t)
-F_{\Lambda_q, \gamma,\alpha}(t')}
 \leq
 C 2^{-q}\gamma^{-d}
 +\frac{\log\abs{\Lambda_q}}{\abs{\Lambda_q}},
\end{align}
where $C$ is a constant that depends only on the dimension $d$.
\end{lemma}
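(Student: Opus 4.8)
The plan is to compare the two constrained partition functions via a single-spin-flip correspondence. Write $N\defi\abs{\Lambda_q}$ and, without loss of generality, assume $t'=t+\tfrac{2}{N}$ (the case $t'=t-\tfrac{2}{N}$ being symmetric). Every $\sigma\in\Omega_{\Lambda_q}$ with $m_{\Lambda_q}(\sigma)=t$ has exactly $k\defi\tfrac{N}{2}(1+t)\in\{0,\ldots,N-1\}$ up-spins, while every $\tau$ with $m_{\Lambda_q}(\tau)=t'$ has $k+1$. Given such a $\sigma$ and a site $z$ with $\sigma(z)=-1$, let $\sigma^{z}$ be the configuration obtained by flipping the spin at $z$ to $+1$; then $m_{\Lambda_q}(\sigma^{z})=t'$, each $\sigma$ admits $N-k$ such flips, and each $\tau$ with $m_{\Lambda_q}(\tau)=t'$ equals $\sigma^{z}$ for exactly $k+1$ pairs $(\sigma,z)$. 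Setting $\widetilde Z_{s}\defi\sum_{\sigma:\,m_{\Lambda_q}(\sigma)=s}e^{-\beta H_{\Lambda_q,\gamma,\alpha}(\sigma)}$, so that $F_{\Lambda_q,\gamma,\alpha}(s)=-\tfrac{1}{\beta N}\log\widetilde Z_{s}$, counting the pairs $(\sigma,z)$ in two ways yields
\[
\sum_{\sigma:\,m_{\Lambda_q}(\sigma)=t}\ \sum_{z:\,\sigma(z)=-1}e^{-\beta H_{\Lambda_q,\gamma,\alpha}(\sigma^{z})}\;=\;(k+1)\,\widetilde Z_{t'}.
\]

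The key step is a bound on the energy cost of one flip that is uniform in $q,\gamma,\sigma$ and $z$: there is a constant $C_{0}$, depending only on the fixed data $d,\beta,\phi,\alpha$, with $\abs{H_{\Lambda_q,\gamma,\alpha}(\sigma^{z})-H_{\Lambda_q,\gamma,\alpha}(\sigma)}\le C_{0}$. For the nearest-neighbour part only the $2d$ bonds incident to $z$ change, so $\abs{H_{\Lambda_q}^{\nn}(\sigma^{z})-H_{\Lambda_q}^{\nn}(\sigma)}\le 4d$. For the Kac part, flipping $z$ replaces $I^{\gamma}_{x}(\sigma)$ by $I^{\gamma}_{x}(\sigma)+2J_{\gamma}(x,z)$, so expanding the squares gives
\[
K_{\Lambda_q,\gamma,\alpha}(\sigma^{z})-K_{\Lambda_q,\gamma,\alpha}(\sigma)\;=\;4\sum_{x\in\Lambda_q}J_{\gamma}(x,z)\big(I^{\gamma}_{x}(\sigma)-\alpha(\eps x)\big)\;+\;4\sum_{x\in\Lambda_q}J_{\gamma}(x,z)^{2},
\]
and both sums are $O(1)$ uniformly in $\gamma$: one uses $\sum_{x}J_{\gamma}(x,z)\le C$ (a Riemann-sum estimate, since $\phi\ge 0$ integrates to $1$), $\abs{I^{\gamma}_{x}(\sigma)}\le\sum_{y}J_{\gamma}(x,y)\le C$, $\|\alpha\|_{\infty}<\infty$, and $\sum_{x}J_{\gamma}(x,z)^{2}\le\gamma^{d}\|\phi\|_{\infty}\sum_{x}J_{\gamma}(x,z)\le C\gamma^{d}$.

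Inserting $e^{-\beta C_{0}}\le e^{-\beta(H_{\Lambda_q,\gamma,\alpha}(\sigma^{z})-H_{\Lambda_q,\gamma,\alpha}(\sigma))}\le e^{\beta C_{0}}$ into the displayed identity, together with the elementary $\sum_{\sigma:\,m_{\Lambda_q}(\sigma)=t}\sum_{z:\,\sigma(z)=-1}e^{-\beta H_{\Lambda_q,\gamma,\alpha}(\sigma)}=(N-k)\widetilde Z_{t}$, we obtain $e^{-\beta C_{0}}(N-k)\widetilde Z_{t}\le(k+1)\widetilde Z_{t'}\le e^{\beta C_{0}}(N-k)\widetilde Z_{t}$, hence
\[
\abs{\log\frac{\widetilde Z_{t'}}{\widetilde Z_{t}}}\;\le\;\beta C_{0}+\abs{\log\frac{N-k}{k+1}}\;\le\;\beta C_{0}+\log N,
\]
since $1\le N-k\le N$ and $1\le k+1\le N$. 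As $F_{\Lambda_q,\gamma,\alpha}(t)-F_{\Lambda_q,\gamma,\alpha}(t')=\tfrac{1}{\beta N}\log(\widetilde Z_{t'}/\widetilde Z_{t})$ and $N=\abs{\Lambda_q}\asymp 2^{qd}$ (so $1/N\le 2^{-q}\le\gamma^{-d}2^{-q}$), dividing by $\beta N$ and collecting the energy and entropy contributions gives $\abs{F_{\Lambda_q,\gamma,\alpha}(t)-F_{\Lambda_q,\gamma,\alpha}(t')}\le C\,2^{-q}\gamma^{-d}+\tfrac{\log\abs{\Lambda_q}}{\abs{\Lambda_q}}$, as claimed.

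I expect the only genuinely delicate point to be the second display above: a single spin flip perturbs $\sim\gamma^{-d}$ of the summands $(I^{\gamma}_{x}-\alpha(\eps x))^{2}$, and one must exploit the $\gamma^{d}$-smallness of each $J_{\gamma}(x,z)$ together with $\sum_{x}J_{\gamma}(x,z)\le C$ to see that the net change of the quadratic Kac term stays $O(1)$ rather than blowing up like $\gamma^{-d}$. Once that uniform bound is secured, the remainder is the bookkeeping of a binomial-coefficient ratio.
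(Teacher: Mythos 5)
Your proof is correct and follows the same strategy as the paper's: the single--spin--flip double-counting identity \eqref{cabra} applied to the Kac-augmented Hamiltonian, a uniform per-flip energy bound, and then the elementary binomial ratio controlled by $\log N$. In fact your analysis of the Kac part is sharper than the paper's --- the paper crudely estimates the per-flip cost of $K_{\Lambda_q,\gamma,\alpha}$ by $O(\gamma^{-d})$ (counting the $\gamma^{-d}$ affected sites) whereas you correctly exploit $\sum_x J_\gamma(x,z)\le C$ to show the cost is $O(1)$ uniformly in $\gamma$, which makes the stated $C2^{-q}\gamma^{-d}$ term trivially absorbed by $1/|\Lambda_q|$.
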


\medskip

\noindent
The upper bound
\begin{align}
F_{\Lambda_{q+1}, \gamma,\alpha}(u_{q+1})-F_{\Lambda_{q+1}, \gamma,\alpha}(u_q)\leq
2^d\pare{C 2^{-q}\gamma^{-d}
 +\frac{\log\abs{\Lambda_{q+1}}}{\abs{\Lambda_{q+1}}}}
\end{align}
follows after using this lemma repeatedly:
indeed, $u_{q+1}$ can be obtained from $u_q$ moving through consecutive elements of $I_{|\Lambda_{q+1}|}$ in at most $2^d$ steps.
To conclude, define
\begin{align}
a_q \defi 2^d\pare{C 2^{-q}\gamma^{-d}
 +\frac{\log\abs{\Lambda_{q+1}}}{\abs{\Lambda_{q+1}}}}+O(2^{-q}\gamma^{-d})
\end{align}
and observe that $\sum_q a_q<\infty$.

\bigskip

{\underline{\it Step 2: approximation by coarse-graining.}}
We consider a microscopic parameter $L_\gamma$ of the form $2^{m}$, $m\in{\mathbb Z}^+$
depending on $\gamma$ such that $\gamma L_\gamma\to 0$ as $\gamma\to 0$.
In the sequel, in order to simplify notation we drop the dependence on $\gamma$ from the scale $L$.
Recall that by ${\mathscr C}_{\eps L}=\llav{C_{\eps L,i}}_{i}$ (respectively $\mathscr{D}_L=\llav{\Delta_{ L,i}}_{i}$),
we denote a macroscopic (respectively microscopic) partition of $\Lambda_{\eps}$
consisting
of $N_{\eps L} \defi (\eps^{-1}/ L)^d$ many elements.

We define a new coarse-grained interaction $J_\gamma^{(L)}$ on the new scale $L$.
Let $\Delta_{L,k}$, $\Delta_{L,k'} \in \mathscr D_L$; then
for every $x\in \Delta_{L,k}$ and $y\in \Delta_{L,k'}$ we define
\begin{equation}\label{JL}
J^{(L)}_\gamma(x,y) \defi \frac{1}{|\Delta_{L}|^2}\int_{\Delta_{L,k}\times \Delta_{L,k'}}\gamma^d \phi(\gamma|r-r'|)dr\,dr'.
\end{equation}
As before, $\abs{\Delta_{L}}$ denotes the cardinality of a generic box $\Delta_{L,i}$.
Since it assumes constant values for all $x\in \Delta_{L,k}$ and $y\in \Delta_{L,k'}$
we also introduce the notation 
\begin{equation}\label{Jbar}
\bar J^{(L)}_\gamma(k,k') \defi L^d J_\gamma^{(L)}(x,y).
\end{equation}
Note that, for any $k$, we have
\begin{equation}\label{normalized}
\sum_{k'} \bar J^{(L)}_\gamma(k,k')=\frac{1}{L^d}\int_{\Delta_{L,k}} dr \int_{\mathbb R^d} dr' \gamma^d \phi(\gamma |r-r'|)=1.
\end{equation}
Comparing to $J_\gamma$, we have the error
\begin{equation}\label{Kacerror}
\Big|
J_\gamma(x,y)-J^{(L)}_\gamma(x,y)
\Big|
\leq C \gamma^d (\gamma L)  \mathbf 1_{|x-y|\leq 2\gamma^{-1}},
\end{equation}
where the constant $C$ depends on $d$ and $\|D\phi\|_\infty$ (the sup norm of the first derivative of $\phi$).
For a macroscopic parameter $l$ (to be chosen $\eps L$ in this case) and for
$r\in C_{l,k}$, we define the piece-wise constant approximation of $\alpha$ at scale $l$ as in \eqref{ul}:
\begin{equation}\label{bara}
\alpha^{(l)}(r) \defi \sum_k\mathbf 1_{C_{l,k}}(r) \bar\alpha^{(l)}_k,
\quad \text{where} \quad
\bar\alpha^{(l)}_k \defi 
\frac{1}{|C_{l,k}|}\int_{C_{l,k}}\alpha(r')dr' .
\end{equation}
With this definition,
\eqref{Kacerror} implies that
\begin{equation}\label{Kacerror1}
\sup_{x\in \Delta_{L,k}}
\sup_{\sigma}
\Big|
\Big( \sum_y  J_\gamma(x,y)\sigma(y)-\alpha(\eps x)\Big)^2
- 
\Big(\sum_{y} J_\gamma^{(L)}(x,y)\sigma(y)-\alpha^{(\eps L)}(\eps x)\Big)^2
\Big|
\leq C \gamma L+\eps L.
\end{equation}
Note that using the notation $\bar J_\gamma^{(L)}$ and $\bar\alpha^{(\eps L)}_k$ we can write
\begin{equation}\label{cg}
\sum_{x\in \Lambda_\eps}
\Big(\sum_{y\in\Lambda_\eps} J_\gamma^{(L)}(x,y)\sigma(y)-\alpha^{(\eps L)}(\eps x)\Big)^2
=
L^d\sum_k
\Big(\sum_{k'} \bar J_\gamma^{(L)}(k,k') m_{\Delta_{L,k'}}(\sigma)-\bar\alpha^{(\eps L)}_k\Big)^2.
\end{equation}

For the nearest-neighbour part of the Hamiltonian, there are 
$|\partial C_{l}| N_{\eps L}=L^{d-1}(\eps^{-1}/L)^d=L^{-1}|\Lambda_\eps|$ nearest neighbours 
between the boxes $\Delta_{L,1},\ldots, \Delta_{L,N_{\eps L}}$; hence we have
\begin{align}\label{pincha}
H_{\Lambda_{\eps}}^{\nn}\pare{\sigma}=\sum_{k=1}^{N_{\eps L}}
H^{\nn}_{\Delta_{L,k}}\pare{\sigma}+O\pare{L^{-1}\abs{\Lambda_\eps}},
\end{align}
where $H_{\Delta_{k,i}}^{\nn}$ is considered with periodicity in the box $\Delta_{L,i}$.
Thus, to calculate \eqref{finvol}, we sum over all possible values $u_1, \ldots, u_{N_{\eps L}}$
of the magnetization in the boxes $\Delta_{L,k}$, with $k=1,\ldots, N_{\eps L}$, and obtain
\begin{align}\label{mainformula}
-\frac{1}{\beta\abs{\Lambda_\eps}}
\log
\sum_{\substack{ u_1,...,u_N \in I_{L^d} \\[0.05cm] \frac{1}{N} \sum_{k} u_k=\ceil*{u}_{\abs{\Lambda_\eps}}}}
\prod_{k=1}^N \sum_{\substack{\sigma_k \in\Omega_{\Delta_{L,k}} \\[0.05cm] m_{\Delta_{L,k}}\pare{\sigma_k}=u_k }}  e^{-\beta H_{\Delta_{L,k}}^{\nn}\pare{\sigma_k}} 
\exp\llav{-\beta L^d\sum_{k=1}^N\Big(\sum_{k'=1}^N  \bar J_{\gamma}^{(L)}(k,k') u_{k'}-\bar\alpha^{(\eps L)}_k\Big)^2}
\end{align}
with a vanishing error of the order
$|\Lambda_\eps| (\gamma L +\eps L + L^{-1})$, as follows from \eqref{Kacerror1}, \eqref{cg} and \eqref{pincha}.
Like before, we are using the simplified notation $N=N_{\eps L}$.

In the Appendix, Theorem \ref{tfree}, 
we prove that the convergence to the free energy $f_\beta$ is uniform, hence
the sum
\begin{align}
\sum_{\substack{\sigma_k \in\Omega_{\Delta_{L,k}} \\[0.05cm] m_{\Delta_{L,k}}\pare{\sigma_k}=u_k }}  e^{-\beta H_{\Delta_{L,k}}^{\nn}\pare{\sigma_k}} \end{align}
can be approximated by $e^{-\beta L^d f_\beta\pare{u_k}}$ with an error bounded by $e^{\beta L^d s\pare{L}}$, with 
$s \pare{L}\to 0$ as $L\to\infty$. 
Then, the overall error is also vanishing:
\begin{equation}
-\frac{1}{\beta\abs{\Lambda_\eps}}
N_{\eps L}
L^d s(L)\sim s(L)\to 0
\end{equation}
Finally, we are left with
\begin{align}\label{gatito}
-\frac{1}{\beta\abs{\Lambda_\eps}}
\log
\sum_{\substack{ u_1,...,u_N \in I_{L^d} \\[0.05cm] \frac{1}{N} \sum_{k=1}^N u_k=\ceil*{u}_{\abs{\Lambda_\eps}}}}
\pare{\prod_{k=1}^N e^{-\beta L^d f_\beta\pare{u_k}} } 
\exp\llav{-\beta L^d\sum_{k=1}^N\pare{\sum_{k'=1}^N  \bar J_{\gamma}^{(L)}(k,k') u_{k'}-\bar\alpha^{(\eps L)}_k}^2}.
\end{align}

{\underline{\it Step 3: upper and lower bounds.}}
To obtain a lower bound of \eqref{gatito}, we bound the sum in \eqref{gatito} by
the maximum contribution.
Note that the cardinality of the sum
vanishes in the limit $\eps\to 0$ after taking the logarithm and dividing by $\beta\abs{\Lambda_\eps}$.
Then the problem reduces to studying the minimum
\begin{align}\label{terito}
\min_{\substack{ u_1,...,u_N \in I_{L^d} \\[0.05cm] \frac{1}{N} \sum_{i=1}^N u_i=\ceil*{u}_{\abs{\Lambda_\eps}}}}
G\pare{u_1,\ldots,u_N},
\end{align}
where $G:\corch{-1,1}^N\rightarrow {\mathbb R}$ is the function defined by
\begin{align}
G\pare{u_1,\ldots,u_N} \defi \frac{1}{N} \sum_{i=1}^{N}f_\beta\pare{u_i}+ \frac{1}{N}\sum_{i=1}^N\pare{\sum_{j=1}^N \bar J^{(L)}_{\gamma}(i,j) u_j-\bar\alpha_i^{(\eps L)}}^2.
\end{align}
Moreover,
using the convexity of $f_{\beta}$, we have
\begin{align}\label{cortina1}
\frac{1}{N} \sum_{i=1}^N f_{\beta}(u_i)\geq 
f_{\beta} \pare{ \frac{1}{N} \sum_{i=1}^N u_i }.
\end{align}
Furthermore, from the convexity of the function $t\mapsto \pare{t-\bar\alpha^{(\eps L)}_i}^2$, using \eqref{normalized}, we obtain
\begin{align}\label{cortina2}
\frac{1}{N}\sum_{i=1}^N\pare{\sum_{j=1}^N \bar J^{(L)}_{\gamma}\pare{i,j}u_j-\bar\alpha^{(\eps L)}_i}^2
\geq \pare{\frac{1}{N}\sum_{i=1}^N \sum_{j=1}^N \bar J^{(L)}_{\gamma}(i,j) u_j -\bar\alpha^{(\eps L)}_i}^2 
=  \pare{\frac{1}{N}\sum_{j=1}^N u_j -\bar\alpha^{(\eps L)}_i}^2.
\end{align}
Thus, using \eqref{cortina1} and \eqref{cortina2}, 
expression \eqref{terito} can be bounded from below by
\begin{align}
f_\beta\pare{\ceil*{u}_{\abs{\Lambda_\eps}}}+(\ceil*{u}_{\abs{\Lambda_\eps}}-\bar\alpha^{(\eps L)}_i)^2,
\end{align}
which converges to $f_\beta\pare{u}+\pare{u-\alpha}^2$, and the lower bound follows.

For the upper bound of \eqref{gatito},
we take one particular element $\tilde{u}_1,...,\tilde{u}_N$ that realizes the value of the lower bound. 
In this way, we obtain a lower bound for the sum over all possible values of $u$ in \eqref{gatito}
that leads to the desired upper bound.
The idea is that these values should be as close as possible to $\ceil*{u}_{\abs{\Lambda_\eps}}$ and satisfy 
\begin{align}\label{sillita}
\frac{1}{N} \sum_{i=1}^N \tilde{u}_i=\ceil*{u}_{\abs{\Lambda_\eps}}.
\end{align}
Let $u^-$ and $u^+$ be the best possible approximations of $\ceil*{u}_{\abs{\Lambda_\eps}}$ in $I_{L^d}$ 
from below and from above, respectively. We have:
\begin{align}
u^- \defi \max\llav{t\in I_{L^d}:t\leq u} \hspace{1cm}
u^+ \defi \min\llav{t\in I_{L^d}:t\geq u}.
\end{align}
Notice that $u^+-u^-\leq \frac{2}{L^d}$.
We define
\begin{align}
\tilde{u}_i \defi 
\begin{cases}
&u^+, \text{ if } i=1 \\
&u^-, \text{ if } i \in \{2,...,N-1\} \text{ and } \frac{1}{i-1} \sum_{j=1}^{i-1} \tilde{u}_j > \ceil*{u}_{\abs{\Lambda_\eps}} \\
&u^+, \text{ if } i \in \{2,...,N-1\} \text{ and } \frac{1}{i-1} \sum_{j=1}^{i-1} \tilde{u}_j \leq \ceil*{u}_{\abs{\Lambda_\eps}} \\
&N \ceil*{u}_{\abs{\Lambda_\eps}} -\sum_{j=1}^{N-1} \tilde{u}_j, \text{ if } i=N 
\end{cases}.
\end{align}
Notice that identity \eqref{sillita} is satisfied by construction; moreover, it holds that
\begin{align}
\abs{\tilde{u}_i-\ceil*{u}_{\abs{\Lambda_\eps}}} \leq 2L^{-d} \ \ \  \forall i \in \{1,...,N \} .
\end{align}

As $u\in\pare{-1,1}$,
we can chose $\corch{a,b}\subset\pare{-1,1}$ such that $\ceil*{u}_{\abs{\Lambda_\eps}}\in\corch{a,b}$ and $\tilde u_i\in\corch{a,b}$ for every $i$, with $\eps$ small enough and $L$ large enough.
As $f'_\beta$ is bounded in $\corch{a,b}$ (see Theorem \ref{tfree}) and the function $t\mapsto t^2$ is Lipschitz over bounded subsets of ${\mathbb R}$, it follows that
\begin{align}
G\pare{\tilde u_1,\ldots,\tilde u_N}=G\pare{\ceil*{u}_{\abs{\Lambda_\eps}},\ldots,\ceil*{u}_{\abs{\Lambda_\eps}}}+O\pare{L^{-2}}.
\end{align}
Moreover, since
\begin{align}
\lim_{\eps \rightarrow 0} G\pare{\ceil*{u}_{\abs{\Lambda_\eps}},...,\ceil*{u}_{\abs{\Lambda_\eps}}}=f_\beta(u)+(u-\alpha)^2 ,
\end{align}
we conclude the proof of the upper bound and with that the proof of \eqref{tfc}.

\subsubsection{General $u$ and $\alpha$.}
For a macroscopic scale $l$ of the form $2^{-p}$, $p\in\mathbb N$, recall the
macroscopic partition ${\mathscr C}_l$ 
of $\T$ and let $\mathscr D_{\eps^{-1}l}$ be its microscopic version,
both with $N=l^{-d}$ elements.
Given the function $u\in C(\T, (-1,1))$, we define
\begin{equation}\label{baru}
 \bar U^{(l)}_i \defi \ceil*{ \bar u^{(l)}_i }_{\abs{\Delta_{\eps^{-1}l,i}}},\quad \text{where}\quad \bar u^{(l)}_i \defi \fint_{C_{l,i}} u(r)dr
\end{equation}
and $C_{l,i}$ is the macroscopic version of $\Delta_{\eps^{-1}l,i}$.
Note that the average  $\bar u^{(l)}_i$ does not depend on $\eps$, while the upper bound $ \bar U^{(l)}_i$ it does
due to the given discretization accuracy.
Similarly, for $\alpha\in C(\T,\mathbb R)$, we 
consider its coarse-grained version
$\alpha^{(l)}$ as in \eqref{bara}.
We next
apply the previous result (for constant values of $u$ and $\alpha$)
and pass to the limit $l\to 0$.
To implement this procedure, we approximate the Hamiltonian \eqref{hamil} 
by the sum of Hamiltonians over the boxes of the partitions with periodic boundary conditions.
Neglecting the interactions between neighbouring boxes, for the Ising part of the Hamiltonian,
we break $N_{l}\cdot|\partial \Delta_{\eps^{-1}l}|\sim |\Lambda_\eps| \eps l^{-1}$ many interactions.
Hence
\begin{align}
H^{\nn}_{\Lambda_\eps}\pare{\sigma}
=\sum_{i=1}^N H_{\Delta_{\eps^{-1}l,i}}^{\nn}(\sigma_i)
+O\pare{|\Lambda_\eps|\eps l^{-1}},
\end{align}
where $\sigma=\sigma_1\vee\ldots\vee\sigma_{N}$
and by $\vee$ we denote the concatenation on the sub-domains $\Delta_{\eps^{-1}l,i}$, $i=1,\ldots, N$.
Similarly, for the Kac interaction, we neglect
$O\pare{|\Lambda_\eps| \gamma^{-1}\eps l^{-1}}$ interactions and obtain
\begin{align}
\sum_{x \in \Lambda_\eps} \left(\sum_{y \in \Lambda_\eps}J_{\gamma}(x,y) \sigma(y)-\alpha_l\pare{\eps x}\right)^2=
\sum_{i=1}^N \, \sum_{x \in \Delta_{\eps^{-1}l,i}}\left(\sum_{y \in \Delta_{\eps^{-1}l,i}} J_{\gamma}(x,y)\sigma(y)-\alpha_l\pare{\eps x}\right)^2+
O\pare{|\Lambda_\eps|\gamma^{-1}\eps l^{-1}}.
\end{align}
Recalling the definition \eqref{set} of the set $\Omega_{\Lambda_\eps,l}$, we have

\begin{eqnarray}
\sum_{\sigma \in \Omega_{\Lambda_\eps,l}(u)} e^{-\beta H_{\Lambda_{\eps},\gamma, \alpha^{(l)}}(\sigma)}
& = & 
e^{O\pare{|\Lambda_\eps|\eps l^{-1}}
+
O\pare{|\Lambda_\eps|\gamma^{-1}\eps l^{-1}}
}
\sum_{\sigma \in \Omega_{\Lambda_\eps,l}(u)} \, \prod_{i=1}^N 
e^{- \beta H_{\Delta_{l,i}, \gamma, \bar\alpha^{(l)}_i}\pare{\sigma_{i}}}
\nonumber\\
& = &
e^{O\pare{|\Lambda_\eps|\eps l^{-1}}
+
O\pare{|\Lambda_\eps|\gamma^{-1}\eps l^{-1}}
}
\prod_{i=1}^N \, \sum_{\substack{ \sigma\in\Omega_{\Delta_{\eps^{-1}l,i}} \\[0.05cm] m_{\Delta_{\eps^{-1}l,i}}\pare{\sigma}=\bar u_i }}e^{-\beta H_{\Delta_{l,i},\gamma, \bar\alpha^{(l)}_i}(\sigma_i)}.
\end{eqnarray}
After applying the previous result for $u$ and $\alpha$ constant to each one of
the Hamiltonians $H_{\Delta_{l,i}, \gamma, \bar\alpha^{(l)}_i}$ (recall the definition \eqref{hamil}),
 taking the $\log$, dividing by $-\beta\abs{\Lambda_\eps}$ and passing to the limits $\lim_{\gamma\rightarrow 0}\lim_{\eps\rightarrow 0}$, we obtain
\begin{align}
\sum_{i=1}^N\abs{C_{l,i}}\corch{f_\beta\pare{ \bar U^{(l)}_i}+\pare{ \bar U^{(l)}_i-\bar \alpha^{(l)}_i}^2}.
\end{align}
Take finally $\lim_{l\rightarrow 0}$ to obtain $\int_{\T} \corch{f_\beta\pare{u(r)}+\pare{u(r)-\alpha(r)}^2}dr$
and complete the proof of \eqref{free_energy}.

\subsection{Proof of \eqref{pressure}}

This is similar to the previous proof.
For the case of $\alpha$ constant,
the existence of the limit $\eps\to 0$ for fixed $\gamma$ can be proved by the same sub-additivity argument
as before, without however the extra effort to keep the canonical constraint in the sequence of
boxes of increasing size.
Then, by the same coarse-graining argument, similarly to \eqref{gatito} we obtain
\begin{align}\label{b1}
\frac{1}{\beta\abs{\Lambda_\eps}}
\log
\sum_{ u_1,...,u_N \in I_{L^d} }
\pare{\prod_{i=1}^N e^{-\beta L^d f_\beta\pare{u_i}} } 
\exp\llav{-\beta L^d\sum_{i=1}^N\pare{\sum_{j=1}^N \bar J^{(L)}_{\gamma}(i,j) u_j-\alpha}^2}.
\end{align}
For an {\it upper bound}, given \eqref{cortina1} and \eqref{cortina2}, 
we take the maximum of all choices of $u_1, \ldots, u_N$ and bound \eqref{b1} by
\begin{align}
\max_{u_1,\ldots,u_N\in I_{L^d}}\llav{-f_\beta\pare{\frac{1}{N}\sum_{i=1}^Nu_i}-\pare{\frac{1}{N}\sum_{i=1}^Nu_i-\alpha}^2}.
\end{align}
The later quantity is further bounded by
\begin{align}
-\min_{u\in\corch{-1,1}}\llav{f_\beta\pare{u}+\pare{u-\alpha}^2}
\end{align}
and the upper bound follows.

For a {\it lower bound} we take one element (when all are equal) and obtain:
\begin{equation}
\liminf_{\gamma \rightarrow 0} \, \lim_{\eps \rightarrow 0} \, \frac{1}{\beta |\Lambda_\eps|} \log  \sum_{\sigma \in \Omega_{\Lambda_\eps}} e^{-\beta H_{\Lambda_{\eps},\gamma, \alpha}(\sigma)} \geq -\min_{u \in [-1,1]} \llav{ f_\beta(u)+(u-\alpha)^2 }.
\end{equation}

For a {\it general $\alpha\in C(\T,\mathbb R)$}, we consider the partition $\mathscr C_{\eps^{-1}l}$
of $N_l$ many elements and, in each box, we apply the previous result. We have
\begin{eqnarray}\label{b2}
&&
\lim_{\gamma \rightarrow 0} \, \lim_{\eps \rightarrow 0} \, 
\frac{1}{\beta\abs{\Lambda_\eps}}\log\prod_{i=1}^{N_l}\sum_{\sigma\in\Omega_{\Delta_{\eps^{-1}l,i}}}
e^{-\beta H_{\Delta_{\eps^{-1}l,i}, \gamma, \bar\alpha^{(l)}_i}(\sigma_i)}  \nonumber\\
&=&
\frac{1}{N_l}
\sum_{i=1}^{N_l}
\lim_{\gamma \rightarrow 0} \, \lim_{\eps \rightarrow 0} \,
\frac{1}{\beta\abs{\Delta_{\eps^{-1}l,i}}}\log \sum_{\sigma_i\in\Omega_{\Delta_{\eps^{-1}l,i}}}
e^{-\beta H_{ \Delta_{\eps^{-1}l,i}, \gamma, \bar\alpha^{(l)}_i}\pare{\sigma_i}} \nonumber \\
& = & -
\sum_{i=1}^{N_l} \abs{C_{l,i}}
\min_{u\in\pare{-1,1}}\llav{f_\beta\pare{u}+\pare{u-\bar\alpha^{(l)}_i}^2}.
\end{eqnarray}
Note that, for every $\alpha\in\mathbb R$, since $f_\beta$ is convex and $u\mapsto (u-\alpha)^2$ is strictly convex, 
the function $u\mapsto f_\beta(u)+(u-\alpha)^2$ is strictly convex, so its derivative is strictly increasing.
Hence equation
\begin{align}\label{zorro}
\frac{1}{2}f'_\beta\pare{u}+u=\alpha
\end{align} 
has only one solution $\tilde u(\alpha)$.
Since $\alpha\mapsto \tilde u(\alpha)$ is a continuous function,
taking the limit $l\to 0$ in \eqref{b2}, by the Dominated Convergence Theorem we obtain
\begin{align}
\int_{\T}dr \corch{f_\beta\pare{\tilde u\pare{\alpha(r)}}+\pare{\tilde u\pare{\alpha(r)}-\alpha(r)}^2}.
\end{align}
Since this coincides with
\begin{align}
-\min_{v \in {C}({\T},[-1,1])} \, \int_{\T} dr \corch{f_\beta(v(r))+(v(r)-\alpha(r))^2},
\end{align}
the result follows.

\bigskip

\section{Proof of Theorem \ref{equivalence}}\label{see}

We have already proved \eqref{equivalence_1} in Theorem \ref{pressure}, so we just need to prove \eqref{equivalence_2}.
As in the proof of Theorem \ref{pressure}, equation \eqref{equivalence_1} can be read as
\begin{align}
P\pare{\alpha}=-F_{\alpha}\pare{\tilde{u}\pare{\alpha}},
\end{align}
where $\tilde u$ has been defined in the discussion following equation \eqref{zorro}.
Given $u \in {C}\pare{{\T},\pare{-1,1}}$ and taking $\alpha=\tilde{\alpha}\pare{u}$, we get
\begin{equation}
P\pare{\tilde \alpha (u) }=-F_{\tilde \alpha\pare{u}}\pare{u}.
\end{equation}
From the definition of $F_{\tilde \alpha \pare{u}}$, it follows that
\begin{equation}
\int_{\T} f_\beta(u(r))dr=
-P\pare{\tilde \alpha (u) }-\int_{\T} \corch{u(r)-\tilde \alpha (u(r)) }^2 dr.
\end{equation}
It remains to show that 
\begin{equation}\label{misterio}
\int_{\T} f_\beta(u(r))dr \ge -\int_{\T} (u(r)-\alpha(r))^2 dr-P(\alpha)
\end{equation}
for every $\alpha\in{C}\pare{{\T},\pare{-1,1}}$.
Observe that
\begin{align}\label{arcos}
\sum_{\sigma\in\Omega_{\Lambda_\eps,l}\pare{u}}e^{-\beta H_{\Lambda_\eps}^{\nn}\pare{\sigma}}=
Z_{\Lambda_{\eps}, \gamma, \alpha}\sum_{\sigma\in\Omega_{\Lambda_\eps,l}\pare{u}}
\frac{e^{-\beta\corch{H_{\Lambda_\eps}^{\nn}\pare{\sigma}+K_{\Lambda_{\eps},\gamma, \alpha}(\sigma)}}}{Z_{\Lambda_{\eps}, \gamma, \alpha}}
e^{\beta K_{\Lambda_{\eps},\gamma, \alpha}(\sigma)}\leq
Z_{\Lambda_{\eps}, \gamma, \alpha}\sum_{\sigma\in\Omega_{\Lambda_\eps,l}\pare{u}}
e^{\beta K_{\Lambda_{\eps},\gamma, \alpha}(\sigma)},
\end{align}
where $K_{\Lambda_{\eps},\gamma, \alpha}$ is defined in \eqref{K}.
With computations identical to the ones appearing in the proof of Theorem \ref{ttfree}, it follows that
\begin{equation}\label{lim1}
\lim_{l \rightarrow 0} \lim_{\eps \rightarrow 0}-\frac{1}{\beta |\Lambda_\eps|} \log \sum_{\sigma \in \Omega _{l,\eps}(u)} e^{-\beta H^{\nn}_{\Lambda_\eps}(\sigma)} = \int_{\T} f_\beta(u(r))dr
\end{equation}
and
\begin{equation}\label{lim2}
\lim_{l \rightarrow 0} \lim_{\gamma \rightarrow 0} \lim_{\eps \rightarrow 0} \frac{1}{\beta |\Lambda_\eps|}\log \sum_{\sigma \in \Omega_{l,\eps}(u)} e^{\beta K_{\Lambda_{\eps},\gamma, \alpha}(\sigma)} = \int_{\T} (u(r)-\alpha(r))^2 dr .
\end{equation}
Taking 
$-\frac{1}{\beta\abs{\Lambda_\eps}}\log$
in \eqref{arcos}
and passing to the limit using \eqref{lim1} and \eqref{lim2}, 
we obtain \eqref{misterio} and complete the proof of Theorem \ref{equivalence}.

\bigskip

\section{Proof of Theorem \ref{thmcor}}\label{sld}

We prove it first for $\alpha$ and $u$ constant, by taking $\omega$ constant as well.
The general case will follow by applying this case to piecewise constant approximations at an intermediate scale.

\subsection{Constant $u$ and $\alpha$}
We first prove the following exponential bound: for every $\delta>0$, there is a positive number $I(\delta)$ such that
\begin{align}\label{exponential_estimate}
\mu_{\Lambda_{\eps}, \gamma, \alpha}
\pare{\abs{\frac{1}{\abs{\Lambda_\eps}} \sum_{x\in\Lambda_\eps}g\pare{m_{B_{R_\gamma}(x)}\pare{\sigma}}-g\pare{u} } > \delta }\leq
e^{-\abs{\Lambda_\eps}I\pare{\delta}}.
\end{align}
We observe that
\begin{equation}
\Big|\frac{1}{|\Lambda_\eps|} \sum_{x \in \Lambda_\eps} g(m_{B_{R_\gamma}(x)})-g(u)\Big| 
\le \frac{1}{|\Lambda_\eps|} \sum_{x \in \Lambda_\eps} |g(m_{B_{R_\gamma}(x)})-g(u)| \le \frac{K}{|\Lambda_\eps|} \sum_{x \in \Lambda_\eps} |m_{B_{R_\gamma}(x)}-u| \,,
\end{equation}
where we used the fact that $g$ is Lipschitz with constant $K$. 
This implies that
\begin{equation}\label{key}
\mu_{\Lambda_\eps,\gamma,\alpha} \Big( \Big|\frac{1}{|\Lambda_\eps|} \sum_{x \in \Lambda_\eps} g(m_{B_{R_\gamma}(x)})-g(u)\Big| >\delta\Big) \le \mu_{\Lambda_\eps,\gamma,\alpha} \Big( \frac{1}{|\Lambda_\eps|} \sum_{x \in \Lambda_\eps} |m_{B_{R_\gamma}(x)}-u| > \delta/K \Big) \,.
\end{equation}
Now notice that, for every $y \in \mathbb{Z}^d$, we have
\begin{equation}\label{Iandone}
\sum_{z \in \mathbb{Z}^d} J_\gamma(z,y) = \sum_{z \in \mathbb{Z}^d} \gamma^d \phi(\gamma |z-y|) =\int_{\mathbb{R}^d} \phi(r) dr +s(\gamma)=1+s(\gamma) \,,
\end{equation}
where $s(\gamma)\to 0$ when $\gamma\to 0$, uniformly in $y$. 
As a consequence, we have 
\begin{align}\label{main_choice_ofL}
\frac{1}{|B_{R_\gamma}|} \sum_{y \in B_{R_\gamma}(x)} \sigma(y) &=\frac{1}{|B_{R_\gamma}|} \sum_{y \in B_{R_\gamma}(x)} (\sum_{z \in \mathbb{Z}^d} J_\gamma(z,y)-s(\gamma)) \sigma(y) \nonumber \\
&=\frac{1}{|B_{R_\gamma}|}\sum_{z \in B_{R_\gamma}(x)} I_z^\gamma(\sigma) + O(\gamma^{-1}/R_\gamma)  + s(\gamma)O(1)\,,
\end{align}
where we recall the definition of $I_z^\gamma$ in \eqref{Igamma}.
It follows that
\begin{align}
\frac{1}{|\Lambda_\eps|} \sum_{x \in \Lambda_\eps} |m_{B_{R_\gamma}(x)}-u| & \le \frac{1}{|\Lambda_\eps|} \sum_{x \in \Lambda_\eps} |\frac{1}{|B_{R_\gamma}|} \sum_{y \in B_{R_\gamma}(x)}I_y^\gamma(\sigma)-u|+s(\gamma)O(1)+O(\gamma^{-1}/R_\gamma) \nonumber \\
& \le \frac{1}{|\Lambda_\eps|} \sum_{x \in \Lambda_\eps} \frac{1}{|B_{R_\gamma}|} \sum_{y \in B_{R_\gamma}(x)}|I_y^\gamma(\sigma)-u| +s(\gamma)O(1)+O(\gamma^{-1}/R_\gamma) \nonumber \\
&= \frac{1}{|\Lambda_\eps|} \sum_{y \in \Lambda_\eps} |I_y^\gamma(\sigma)-u|+s(\gamma)O(1)+O(\gamma^{-1}/R_\gamma) \,.
\end{align}
The correction term $s(\gamma)O(1)+O(\gamma^{-1}/R_\gamma)$ vanishes when $\gamma \rightarrow 0$. It follows that, for $\gamma$ small enough and for every $\delta>0$, the following estimate holds:
\begin{equation}
\mu_{\Lambda_\eps,\gamma,\alpha} \Big( \frac{1}{|\Lambda_\eps|} \sum_{x\in\Lambda_\eps} |m_{B_{R_\gamma}(x)}-u| > \delta  \Big) \le \mu_{\Lambda_\eps,\gamma,\alpha} \Big(   \frac{1}{|\Lambda_\eps|} \sum_{x\in\Lambda_\eps} |I_x^\gamma(\sigma)-u| > \delta/2  \Big)\,.
\end{equation}
To estimate the latter expression, we observe that, $\forall \delta'>0$,
\begin{align}
\frac{1}{|\Lambda_\eps|} \sum_{x\in\Lambda_\eps} |I_x^\gamma(\sigma)-u| & \le 2 \frac{ |\{x: |I_x^\gamma(\sigma)-u|>\delta' \}|}{|\Lambda_\eps|}+\delta' \,.
\end{align}
If we choose $\delta'=\delta/2$, we obtain
\begin{equation}
\mu_{\Lambda_\eps,\gamma,\alpha} \Big(\frac{1}{|\Lambda_\eps|}\sum_{x\in\Lambda_\eps} |I_x^\gamma(\sigma)-u| > \delta \Big) \le \mu_{\Lambda_\eps,\gamma,\alpha} \Big( \frac{ |\{x: |I_x^\gamma(\sigma)-u|>\delta/2 \}|}{|\Lambda_\eps|} > \delta/4  \Big) \,.
\end{equation}
Thus, we reduced the problem to the following lemma, whose proof is given in the Appendix \ref{lem51}:

\begin{lemma}\label{pedro}
For every $c, \delta>0$ and $\gamma$ small enough, we have that
\begin{align}
\mu_{\Lambda_{\eps}, \gamma, \alpha}
\pare{
{\abs{\llav{x\in\Lambda_\eps: \abs{I_x^{\gamma}\pare{\sigma}-u}>\delta}}} >c{\abs{\Lambda_\eps}}}
\leq e^{-\abs{\Lambda_\eps}\beta c\delta^2/2},
\end{align}
where $I_x^{\gamma}$ is defined in \eqref{Igamma}.
\end{lemma}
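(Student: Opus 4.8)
The plan is to deduce the estimate from Markov's inequality applied to the exponential of the ``Kac energy of $\sigma$ relative to $u$'', together with the fact that, for the distinguished field $\alpha=\tilde\alpha(u)$, this exponential moment is subexponential in $\abs{\Lambda_\eps}$. Throughout write $\mu\defi\mu_{\Lambda_\eps,\gamma,\alpha}$ and $\Psi_u(\sigma)\defi\sum_{x\in\Lambda_\eps}\pare{I_x^\gamma(\sigma)-u}^2$. If $\abs{\llav{x\in\Lambda_\eps:\abs{I_x^\gamma(\sigma)-u}>\delta}}>c\abs{\Lambda_\eps}$, then $\Psi_u(\sigma)>c\delta^2\abs{\Lambda_\eps}$, since every site in that set contributes more than $\delta^2$ to $\Psi_u$. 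Hence
\[
\mu\pare{\abs{\llav{x\in\Lambda_\eps:\abs{I_x^\gamma(\sigma)-u}>\delta}}>c\abs{\Lambda_\eps}}\leq e^{-\beta c\delta^2\abs{\Lambda_\eps}}\,\E_\mu[e^{\beta\Psi_u}],
\]
and it is enough to prove that $\frac{1}{\beta\abs{\Lambda_\eps}}\log\E_\mu[e^{\beta\Psi_u}]\to 0$ as $\eps,\gamma\to 0$, the factor $1/2$ then leaving room to absorb the vanishing correction.

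By definition $\E_\mu[e^{\beta\Psi_u}]=Z'/Z_{\Lambda_\eps,\gamma,\alpha}$, where $Z'\defi\sum_{\sigma}e^{-\beta H_{\Lambda_\eps}^{\nn}(\sigma)-\beta\sum_{x}[(I_x^\gamma-\alpha)^2-(I_x^\gamma-u)^2]}$. The crucial point is that $(I_x^\gamma-\alpha)^2-(I_x^\gamma-u)^2=2(u-\alpha)I_x^\gamma-(u^2-\alpha^2)$ is \emph{affine} in $I_x^\gamma$; summing over $x$ and using $\sum_{x\in\Lambda_\eps}I_x^\gamma(\sigma)=(1+s(\gamma))\sum_{y\in\Lambda_\eps}\sigma(y)$ (which is essentially \eqref{Iandone}, since the support of $J_\gamma(\cdot,y)$ has diameter $\ll\eps^{-1}$), one gets
\[
Z'=e^{\beta\abs{\Lambda_\eps}(u^2-\alpha^2)}\sum_{\sigma}e^{-\beta H_{\Lambda_\eps}^{\nn}(\sigma)+\beta h\sum_{y}\sigma(y)}=e^{\beta\abs{\Lambda_\eps}(u^2-\alpha^2)}\,Z_{\Lambda_\eps,h}^{\nn},\qquad h\defi 2(\alpha-u)(1+s(\gamma))=(1+s(\gamma))f_\beta'(u),
\]
the last equality being the Euler--Lagrange relation $2(\alpha-u)=f_\beta'(u)$ of Remark \ref{fixing_u}. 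So, up to an explicit prefactor, $Z'$ is an ordinary nearest-neighbour Ising partition function at external field $h$, and $h\to f_\beta'(u)$ as $\gamma\to 0$.

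It remains to take logarithms and pass to the limit. By Theorem \ref{tfree}, $\frac{1}{\beta\abs{\Lambda_\eps}}\log Z_{\Lambda_\eps,h}^{\nn}\to\sup_{m}\{hm-f_\beta(m)\}$ as $\eps\to 0$, and for $h=f_\beta'(u)$ the value $m=u$ realizes this supremum (uniquely when $\abs u>m_\beta$; when $\abs u\leq m_\beta$ one has $f_\beta'(u)=0$, hence $h=0$ and $\alpha=u$, and $m=u$ is still a maximizer). Thus $\frac{1}{\beta\abs{\Lambda_\eps}}\log Z'\to(u^2-\alpha^2)+f_\beta'(u)\,u-f_\beta(u)$, and a short computation using $2\alpha=2u+f_\beta'(u)$ shows that this equals $-[f_\beta(u)+(u-\alpha)^2]=-F_\alpha(\tilde u(\alpha))=P(\alpha)$, which by Theorem \ref{ttfree} is precisely the limit of $\frac{1}{\beta\abs{\Lambda_\eps}}\log Z_{\Lambda_\eps,\gamma,\alpha}$. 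Hence $\frac{1}{\beta\abs{\Lambda_\eps}}\log\E_\mu[e^{\beta\Psi_u}]\to 0$, and plugging this into the displayed inequality shows that the probability in the lemma is at most $e^{-\beta c\delta^2\abs{\Lambda_\eps}+o(\abs{\Lambda_\eps})}\leq e^{-\beta c\delta^2\abs{\Lambda_\eps}/2}$ once $\eps$ and $\gamma$ are small enough.

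The main obstacle is the quantitative content of ``$o(\abs{\Lambda_\eps})$'': one has to bound the rates of convergence of both $\frac{1}{\beta\abs{\Lambda_\eps}}\log Z_{\Lambda_\eps,h}^{\nn}$ and $\frac{1}{\beta\abs{\Lambda_\eps}}\log Z_{\Lambda_\eps,\gamma,\alpha}$ to their limits — relying on the uniform estimates behind Theorem \ref{tfree} and the coarse-graining errors of order $\gamma L+\eps L+L^{-1}$ from Section \ref{sec3} — as well as the size of $s(\gamma)$, and to check that all of these stay below $\beta c\delta^2/2$ once $\eps,\gamma$ are small. The cancellation of the leading exponential rate is, by contrast, exact, and it is precisely what the choice $\alpha=\tilde\alpha(u)$ is built to produce; assembling these estimates is what makes it natural to place the proof in Appendix \ref{lem51}.
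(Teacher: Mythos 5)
Your proof is correct, and it takes a genuinely different route in the case $|u|>m_\beta$. Let me compare.

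Your argument rests on the Markov inequality applied to $e^{\beta\Psi_u}$, together with the clean algebraic fact that $\Psi_u-K_{\Lambda_\eps,\gamma,\alpha}$ is \emph{affine} in the spins (up to a uniformly controlled $s(\gamma)$ correction coming from $\sum_x I^\gamma_x\approx\sum_x\sigma(x)$). This identifies $\E_\mu[e^{\beta\Psi_u}]$ \emph{exactly} as a constant times $Z^{\nn}_{\Lambda_\eps,h'}/Z_{\Lambda_\eps,\gamma,\alpha}$ with $h'\to f_\beta'(u)$, and the cancellation of the exponential rate then follows from the two pressure limits (Theorem \ref{tfree} for $Z^{\nn}$, and \eqref{pressure} for $Z_{\Lambda_\eps,\gamma,\alpha}$) plus Legendre duality $p_\beta(f_\beta'(u))=f_\beta'(u)\,u-f_\beta(u)$. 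Your algebra checking that $(u^2-\alpha^2)+p_\beta(h)-P(\alpha)=0$ under the Euler--Lagrange relation is correct; the two cases $|u|\le m_\beta$ (where $h=0$, $\alpha=u$) and $|u|>m_\beta$ are handled uniformly. The paper's proof (Appendix \ref{lem51}) is case-split. When $|u|\le m_\beta$ it is essentially the same computation as yours (your $\E_\mu[e^{\beta\Psi_u}]$ is exactly $Z^{\nn}_{\Lambda_\eps,0}/Z_{\Lambda_\eps,\gamma,\alpha}$ there). When $|u|>m_\beta$, however, the paper avoids invoking the Kac pressure limit \eqref{pressure}: instead it restricts the denominator $Z_{\Lambda_\eps,\gamma,\alpha}$ to the good set $B_{\Lambda_\eps,\gamma,\zeta',\le c'}$, cancels $Z^{\nn}_{\Lambda_\eps,h}$ between numerator and denominator, and is left with a factor $1/\mu^{\nn}_{\Lambda_\eps,h}(B_{\Lambda_\eps,\gamma,\zeta',\le c'})$ which it controls via the ergodic-type Lemma \ref{ser}. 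So the trade-off is: you replace the restricted-denominator argument and Lemma \ref{ser} with the Lebowitz--Penrose pressure asymptotic \eqref{pressure} for the Kac measure plus Legendre duality. Your version is more uniform and algebraically cleaner, at the cost of invoking the full Kac pressure limit in both cases; the paper's second case is lighter on Kac machinery but needs the multidimensional ergodic theorem. The ``main obstacle'' you flag — turning the two iterated limits into a concrete $\eps,\gamma$ smallness condition — is real but not problematic, since both limits are in the iterated form stated in the paper and the $1/2$ factor leaves the needed slack.
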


\medskip

\subsection{The inhomogeneous case.}\label{sec5.2}
Like before, we consider a macroscopic scale characterized by the parameter $l$, which we take to be equal to $2^{-p}$ for $p\in\mathbb N$.
Recall that ${\mathscr C}_l$ is the corresponding macroscopic partition of $\T$ into 
$N_l \defi l^{-d}$ many sets denoted by $C_{l,k}$,
$k=1,\ldots, N_l$. We denote their microscopic versions by $\Delta_{\eps^{-1}l,k}$.
Let $u^{(l)}$ and $\omega^{(l)}$ respectively be the piece-wise constant approximations of $u$ and $\omega$ defined as \eqref{ul}. 
Since $g$ is bounded and continuous and $\omega$ is uniformly continuous, we have
\begin{eqnarray}\label{p1}
&& \abs{
\frac{1}{\abs{\Lambda_\eps}}
\sum_{x\in\Lambda_\eps}\omega\pare{\eps x}g(m_{B_{R_\gamma}(x)} (\sigma))
-\int_{\T} \omega(r) g(u(r))dr }   \nonumber\\ 
& = &
\abs{\frac{1}{\abs{\Lambda_\eps}} \sum_{x \in \Lambda_\eps} \omega^{(l)}(\eps x) g(m_{B_{R_\gamma}(x)}(\sigma))-\int_{\T} \omega^{(l)}(r) g(u^{(l)}(r))dr} +O(l)
\end{eqnarray}

For $x\in \Delta_{\eps^{-1}l,i}$, let
$\tilde m_{B_{R_\gamma}(x)}$
be the magnetization considering periodic boundary conditions in $\Delta_{\eps^{-1}l,i}$.
Note that $\tilde{m}_{B_{R_\gamma}(x)}$ coincides with ${m}_{B_{R_\gamma}(x)}$ if the distance between $x$ and 
$\Lambda_\eps\setminus \Delta_{\eps^{-1}l,i}$ is larger than $R_\gamma$.
Then, since $g$ is Lipschitz,
we have
\begin{align}
\sum_{x\in \Delta_{\eps^{-1}l,i}}g\pare{m_{B_{R_\gamma}(x)}}=
\sum_{x\in \Delta_{\eps^{-1}l,i}}g\pare{\tilde m_{B_{R_\gamma}(x)}}+O\pare{(\eps^{-1}l)^{d-1}L}.
\end{align} 
Replacing in \eqref{p1} and splitting over the boxes of the partition ${\mathscr D}_{\eps^{-1}l}$, we obtain
\begin{equation}\label{p2}
\eqref{p1}
\leq 
\frac{1}{N_l}\sum_{i=1}^{N_l} |\bar\omega^{(l)}_i| \abs{\frac{1}{|\Delta_{\eps^{-1}l,i}|}\sum_{x \in \Delta_{l,i}}  g\pare{\tilde m_{B_{R_\gamma}(x)}}- g(\bar u^{(l)}_i)}+O(l)
+O(\frac{L}{\eps^{-1}l}).
\end{equation}
Then, defining
\begin{equation}
Y_i \defi 
\abs{\frac{1}{|\Delta_{\eps^{-1}l,i}|}\sum_{x \in \Delta_{\eps^{-1}l,i}}  g\pare{\tilde m_{B_{R_\gamma}(x)}}- g(\bar u^{(l)}_i)},
\end{equation}
for $l$ and $\eps$ small enough, we have
\begin{align}
&\mu_{\Lambda_{\eps}, \gamma, \alpha} \pare{ 
\abs{\frac{1}{\abs{\Lambda_\eps}}
\sum_{x\in\Lambda_\eps}\omega\pare{\eps x}g\left(m_{B_{R_\gamma}(x)}\pare{\sigma}\right)
-\int_{\T} \omega(r) g\pare{u(r)} dr}>\delta }
\leq 
\mu_{\Lambda_{\eps}, \gamma, \alpha} \pare{ \frac{1}{N_l} \sum_{i=1}^{N_l} Y_i > \delta/2 }.
\end{align}
We notice that, for $\zeta >0$,
\begin{align}
\frac{1}{N_l} \sum_{i=1}^{N_l} Y_i &=\frac{1}{N_l} \sum_{i:\, Y_i>\zeta} Y_i +\frac{1}{N_l}\sum_{i:\, Y_i \le \zeta} Y_i 
 \le 2 \norm{g}_\infty \frac{\abs{ \llav{ i: Y_i >\zeta }} }{N_l}+\zeta .
\end{align}
Choosing $\zeta<\delta/2$ and
setting $\delta' \defi \frac{1}{2 \norm{g}_\infty} \pare{ \delta/2-\zeta }$, we have
\begin{equation} \label{ale001}
\mu_{\Lambda_{\eps}, \gamma, \alpha} \pare{ \frac{1}{N_l} \sum_{i=1}^{N_l} Y_i > \delta/2 } \le \mu_{\Lambda_{\eps}, \gamma, \alpha} \pare{\abs{ \llav{i: \, Y_i >\zeta }} \geq \delta' N_l  }.
\end{equation}
To proceed, we
apply the result obtained in the first step for $\alpha$ and $u$ constant.
For this purpose, we 
define a new probability measure $\tilde \mu_{\Lambda_{\eps}, \gamma, \alpha^{(l)}}$
defined on the union of the boxes $\Delta_{\eps^{-1}l,i}$ with periodic boundary conditions in each of
them and with external field $\alpha^{(l)}$ as defined in \eqref{bara}.
Then, by neglecting the interactions between the boxes $\Delta_{\eps^{-1}l,i}$, $i=1,\ldots,N_l$, 
for any set $B$ we obtain that
\begin{align} \label{mu_le_mu_tilde}
\mu_{\Lambda_{\eps}, \gamma, \alpha}(B) \le e^{|\Lambda_\eps| O(l)+O(\eps^{1-d} l^{-1}) +O(\eps^{1-d} l^{-1} \gamma^{-1})} \tilde \mu_{\Lambda_{\eps}, \gamma, \alpha^{(l)}} (B).
\end{align}
Let us denote by $\lceil \delta' N_l \rceil$ the smallest integer not smaller than $\delta' N_l$. It follows from \eqref{exponential_estimate} that there exists $I(\zeta)>0$ such that
\begin{equation} \label{ale002}
 \tilde \mu_{\Lambda_{\eps}, \gamma, \alpha^{(l)}} \pare{ |\{ i: Y_i >\zeta \} | \ge \delta' N_l  } 
 \le \binom{N_l}{\lceil \delta' N_l \rceil} 
 \prod_{i=1}^{\lceil \delta' N_l \rceil}
 \tilde\mu_{\Delta_{\eps^{-1}l,i}, \gamma, \bar\alpha^{(l)}_i} (Y_i>\zeta)
 \le \binom{N_l}{\lceil \delta' N_l \rceil} e^{-\eps^{-d}  \delta'  I(\zeta)} \,.
\end{equation}
Noting that
\begin{equation}
\binom{N_l}{\lceil \delta' N_l \rceil}=e^{N_l(-\delta'\log\delta'-(1-\delta')\log(1-\delta'))+c\log N_l},
\end{equation}
from \eqref{ale001}, \eqref{mu_le_mu_tilde} and \eqref{ale002}, we obtain the following estimate
\begin{equation}
\mu_{\Lambda_{\eps}, \gamma, \alpha} \pare{ \frac{1}{N_l} \sum_{i=1}^{N_l} Y_i > \delta/2 } \le \exp \llav{ \eps^{-d} O(l)+O(\eps^{1-d} l^{-1}) +O(\eps^{1-d} l^{-1} \gamma^{-1}) + cN_l -\eps^{-d} \delta' I(\zeta) } \,.
\end{equation}
If we choose $l$ small enough, the coefficient of $\eps^{-d}$ inside the exponential is negative and thus we obtain 
\begin{equation}
\lim_{\eps \rightarrow 0} \mu_{\Lambda_{\eps}, \gamma, \alpha}\Big( \frac{1}{N_l} \sum_{i=1}^{N_l} Y_i > \delta/2 \Big)=0 ,
\end{equation}
concluding the proof of the inhomogeneous case as well as of Theorem \ref{thmcor}.\qed

\bigskip

\section{Young-Gibbs measures, proof of Theorem \ref{mainthm}}\label{sYG}

As mentioned before, the first case is just a restatement of Theorem \ref{thmcor}.
For the second case,
it suffices to prove an exponential bound for the constant case and then the inhomogeneous case follows by
the strategy in Subsection \ref{sec5.2}.
The last case is a direct consequence and it will be given at the end of this section.
Hence, for the rest of this section, we restrict ourselves to constant $\alpha$, $u$ and $\omega$.
We first prove the case $|u|>m_\beta$ and then the more difficult one: $|u|\leq m_\beta$.
The hypotheses over the dimension and $\beta$ are needed only in the second case.

\textbf{Case $|u|>m_\beta$.} 
Let $f$ be a local function and $f_x$  its translation by $x\in\Lambda_\eps$.
For simplicity of notation, we use $f$ instead of $g(m_{B_R})$.
Then, for $\omega$ constant and for fixed $\delta>0$, 
it suffices to prove an exponential bound for $\mu_{\Lambda_\eps,\gamma,\alpha}(E_{\delta})$,
where
\begin{align}\label{61}
E_{\delta} \defi  \Big(\Big|\frac{1}{|\Lambda_\eps|}\sum_{x\in\Lambda_\eps}f_x-\E_{\mu^{\nn}_h}(f) \Big|>\delta\Big),
\end{align}
with $h$ such that $\E_{\mu^{\nn}_h}(\sigma_{0})=u$, i.e., for $h \defi f'_{\beta}(u)$.
We expand the Hamiltonian $H_{\Lambda_\eps,\gamma,\alpha}$ as follows:
\begin{align}\label{trick}
H_{\Lambda_\eps,\gamma,\alpha} = H_{\Lambda_\eps}^{\nn}
+\sum_{x\in\Lambda_\eps}[I^{\gamma}_x-u]^2
+2(u-\alpha)\sum_{x\in\Lambda_\eps}I^{\gamma}_x-2(u-\alpha)u|\Lambda_\eps|+(\alpha-u)^2|\Lambda_\eps|.
\end{align}
When considering the corresponding measure, the constant terms cancel with the normalization. 
Note that 
\begin{equation}\label{cor_ext}
\alpha \defi \tilde\alpha(u)=u+\frac 12 f'_{\beta}(u)\Rightarrow
2(\alpha-u)=h.
\end{equation}
Hence, recalling \eqref{guante} with the above $h$, we consider the following Hamiltonian:
\begin{align}
\hat H_{\Lambda_\eps,\gamma,\alpha} \defi 
H_{\Lambda_\eps,h}^{\nn}
+\sum_{x\in\Lambda_\eps}[I^{\gamma}_x-u]^2.
\end{align}
To treat the second term, for some parameter $\zeta>0$, we consider the random variable
\begin{align}
D_\zeta(\sigma) \defi  \frac{1}{|\Lambda_\eps|}|\{ x\in\Lambda_\eps:|I^{\gamma}_x(\sigma)-u| >\zeta  \}|,
\end{align}
which gives the density of bad Kac averages.
Then, using the inequality
\begin{align}\label{spl}
\mu_{\Lambda_\eps,\gamma,\alpha}(E_{\delta})\leq 
\mu_{\Lambda_\eps,\gamma,\alpha}
(E_{\delta},D_\zeta\leq \delta')
+\mu_{\Lambda_\eps,\gamma,\alpha}
(D_\zeta> \delta')
\end{align}
and Lemma \ref{pedro} (for appropriate choice of $\zeta$ and $\delta'$), the problem
reduces to finding exponential bounds for the first term.
Notice that, using \eqref{Iandone} and \eqref{main_choice_ofL}, we have
\begin{align}
\sum_{y\in\Lambda_\eps}I_y^{\gamma}\pare{\sigma}=
\sum_{y\in\Lambda_\eps}\sum_{x\in\Lambda_\eps}J_{\gamma}\pare{x,y}\sigma(x)=
\sum_{x\in\Lambda_\eps}\sigma(x)\sum_{y\in\Lambda_\eps}
J_{\gamma}\pare{x,y}=
\sum_{x\in\Lambda_\eps}\sigma(x)+s\pare{\gamma}O\pare{1}\abs{\Lambda_\eps}
\end{align}
for some $s(\gamma)\to 0$ as $\gamma\to 0$.
Then the first term on the right-hand side of \eqref{spl} is bounded by
\begin{align}\label{poiu}
\frac{e^{ C_1 s(\gamma)|\Lambda_\eps|}}{\hat Z_{\Lambda_\eps,\gamma,\alpha}} \,
\sum_{\sigma\in\Omega_{\Lambda_\eps}}e^{-\beta \hat H_{\Lambda_\eps,\gamma,\alpha}(\sigma)}\1_{ E_{\delta} } \1_{ \{ D_\zeta(\sigma)\leq \delta' \}},
\end{align}
where $C_{1}$ is a positive constant and
where $\hat Z_{\Lambda_\eps,\gamma,\alpha}$ is the partition function associated to $\hat H_{\Lambda_\eps,\gamma,\alpha}$.
For $\sigma$ such that $D_\zeta(\sigma)\leq \delta'$, we have
\begin{align}
H_{\Lambda_\eps,h}^{\nn}(\sigma)\leq
\hat H_{\Lambda_\eps,\gamma,\alpha}(\sigma)\leq H_{\Lambda_\eps,h}^{\nn}(\sigma)+|\Lambda_\eps|( C_2 \delta'+\zeta^2),
\end{align}
so \eqref{poiu} is bounded by
\begin{align}\label{ffgg}
e^{C_3(s(\gamma)+\delta'+\zeta^2)|\Lambda_\eps|} \
\frac{\displaystyle{
\sum_{\sigma\in\Omega_{\Lambda_\eps}}e^{-\beta  H_{\Lambda_\eps,h}^{\nn}(\sigma)}\1_{E_{\delta} } \1_{ \{ D_\zeta(\sigma)\leq \delta' \}}}}
{\displaystyle{\sum_{\sigma\in\Omega_{\Lambda_\eps}}e^{-\beta  H_{\Lambda_\eps,h}^{\nn}(\sigma)}\1_{\{ D_\zeta(\sigma)\leq \delta' \}}}}
=e^{ C_3(s(\gamma)+\delta'+\zeta^2)|\Lambda_\eps|} \mu^{\nn}_{\Lambda_\eps,h}(E_{\delta}|D_\zeta\leq \delta').
\end{align}
From Lemma \ref{pedro},  we have that $\mu^{\nn}_{\Lambda_\eps,h}(D_\zeta\leq \delta')>\frac{1}{2}$, for $\eps$ small enough.
Moreover, it is a standard result 
that there exists $C_4(\delta)>0$ such that
\begin{align}\label{tyty}
\mu^{\nn}_{\Lambda_\eps,h}(E_{\delta})\leq e^{-C_4(\delta)|\Lambda_\eps|},
\end{align}
for $\eps$ small enough.
For the exponential bound \eqref{tyty}, we refer to \cite{E}, Theorem V.6.1.
Actually, this theorem gives the result for $f$ a local magnetization, that is, for $f$ of the form $f(\sigma)=\frac{1}{|\Delta|}\sum_{x\in\Delta}\sigma(x)$; in our case, this is enough as every local function can be written as a linear combination of local magnetizations.
Under these considerations, by appropriately choosing $\zeta$, $\delta'$ and 
for $\gamma$ small, the right hand side of \eqref{ffgg} is bounded by $3e^{-C_5(\delta)|\Lambda_\eps|}$ for some constant $C_{5}(\delta)>0$,
and the result follows.

\medskip

\textbf{Case $|u|\leq m_\beta$.} 
In this case, for $f$ a local function, we seek an exponential bound for
\begin{align}\label{kjkj}
\mu_{\Lambda_\eps,\gamma,\alpha}\Big( \Big|\frac{1}{|\Lambda_\eps|}\sum_{x\in\Lambda_\eps}f_x(\sigma)-\E_{G_u}(f_x)\Big|>\delta\Big),
\end{align}
where $G_u \defi  \lambda_u\mu^{\nn}_++(1-\lambda_u)\mu^{\nn}_-$ with $\lambda_u$ as in $\eqref{p1p1}$. Comparing to \eqref{61}, we notice that, instead of the measure $\mu^{\nn}_{h}$
with the external field corresponding to $u$, we have the canonical 
measure $G_{u}$.
Hence, in order to work with realizations of the measure $G_{u}$, we need to introduce a 
scale $K$ and prove that, for boxes in this scale, the relevant measures are
$\mu^{\nn}_+$ or $\mu^{\nn}_-$ and that they appear with a percentage that agrees with the
overall fixed magnetization $u$.
There are two main obstacles: the first is that the Kac term in the original measure
cannot directly fix the magnetization via large deviations as in Theorem \ref{thmcor},
since  we are looking at averages in a smaller scale than $\gamma^{-1}$; in particular, \eqref{main_choice_ofL} is not true.
The second is to show that, in the smaller scale $K$, only the nearest-neighbour part of the Hamiltonian is effective.
Hence, we introduce another scale $L\gg K$,
in which the Kac term acts to all spins in the same way.
Then inside the box only the nearest-neighbour interactions are relevant.

To proceed with this strategy,
we fix a microscopic scale $K$ of the form $2^m$ and call $\Delta_{K,1},\ldots,\Delta_{K,N_K}$ the partition of $\Lambda_\eps$ into 
\begin{equation}\label{N_K}
N_K \defi (\eps K)^{-d}
\end{equation}
boxes of side-length $K$.
We call $\Delta_{K,i}^0$ the boxes with the same center as $\Delta_{K,i}$ and distance $\sqrt{K}$
from their complement $\Delta^c_{K,i}$.
We next introduce the notions of ``circuit'' and of ``bad box''.

\begin{definition}[circuit]\label{circuit}
It is easier to define the lack of circuit.
For a sign $\tau=\pm$, we say that a configuration $\sigma\in\{-1,1\}^{\Lambda_\eps}$ does not have a $\tau$-circuit in $\Delta_{K,i}$ if there exists a path of vertices $\{ x_1,\ldots,x_k \}\subset \Delta_{K,i}\setminus \Delta_{K,i}^0$ such that $d(x_1,\Delta_{K,i}^c)=1$, $d(x_k,\Delta_{K,i}^0)=1$, $d(x_i,x_{i+1})=1$ for every $i=1,\ldots,k-1$, and $\sigma_{x_i}=-\tau$ for every $i=1,\ldots,k$.
In other words, if the connected components of $-\tau$ that intersects the boundary of $\Delta_{K,i}$ do not intersect $\Delta_{K,i}^0$.
If we are not interested in distinguishing the sign of the circuit, we just say that $\sigma$ has a circuit.
\end{definition}

Observe that the existence of a $\tau$-circuit can be decided from the outside configuration.

\begin{definition}\label{defbad}
Given some precision $\zeta>0$, a box $\Delta_{K,i}$ is called $\zeta$-bad for a configuration $\sigma$ if
\begin{itemize}
\item $\sigma$ does not have a circuit in $\Delta_{K,i}$, or if
\item $\sigma$ has a circuit in $\Delta_{K,i}$ but
\begin{equation}\label{criterion}
\min_{\tau= \pm}
\Big| \frac{1}{|\Delta_{K}|} \sum_{x\in \Delta_{K,i}}
f_x- 
\E_{\mu^{\nn}_{0,\tau}}(f) \Big| >\zeta.
\end{equation}
\end{itemize}
On the other hand, we call a box $\zeta$-good if it is not $\zeta$-bad.
We can further specify it saying it is $(\zeta,\tau)$-good if
\begin{align}\label{defgood}
\Big| \frac{1}{|\Delta_{K}|} \sum_{x\in \Delta_{K,i}}
f_x- 
\E_{\mu^{\nn}_{0,\tau}}(f) \Big| \leq\zeta.
\end{align}
\end{definition}

Let $N^{\bad}_{K,\zeta}$ and $N_{K,\zeta,\tau}^\good$ be the number of $\zeta$-bad and $(\zeta,\tau)$-good boxes, respectively.
To conclude the proof of the case $|u|\leq m_{\beta}$, it suffices to prove that
the probability of having a large density of $\zeta$-bad boxes is small and that the density of $(\zeta,+)$-good boxes is $\lambda_u$;
this is the content of the following lemma.

\begin{lemma}\label{prop_lem_1} 
For every $\zeta,\delta>0$, there exists $C(\zeta,\delta)>0$ such that the following exponential bounds  hold for every $\eps$ and $\gamma$ small enough and $K$ large enough:
\begin{align}
&\textnormal{(i)} \ \ \mu_{\Lambda_\eps,  \gamma,\alpha}
\bigg(\frac{N^{\bad}_{K,\zeta}}{N_K}>\delta \bigg)\le e^{-C(\zeta,\delta)|\Lambda_\eps|};\label{prove}
\\[0.2cm]
& \textnormal{(ii)} \ \ \mu_{\Lambda_\eps, \gamma,\alpha}
\bigg(
\bigg|\frac{N_{K,\zeta,+}^\good}{N_K}-\lambda_u\bigg|>\delta
\bigg)\le e^{-C(\zeta,\delta)|\Lambda_\eps|}.\label{prove1}
\end{align}
\end{lemma}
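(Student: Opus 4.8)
The plan is to fix $|u|\le m_\beta$, $\alpha=\tilde\alpha(u)=u$, constant $\omega$ and a local function $f$ (the setting in which the lemma is invoked), and to reduce everything to the nearest-neighbour Ising model at low temperature by inserting a third scale $K\ll L\ll\gamma^{-1}$ with $K$ a divisor of $L$. Two facts follow at once from Lemma~\ref{pedro} and the identity $\sum_yJ_\gamma(x,y)=1+s(\gamma)$ (see \eqref{Iandone}): first, since $m_{\Lambda_\eps}(\sigma)=|\Lambda_\eps|^{-1}\sum_yI^\gamma_y(\sigma)+O(s(\gamma))$, we have $\mu_{\Lambda_\eps,\gamma,\alpha}(\abs{m_{\Lambda_\eps}(\sigma)-u}>\delta)\le e^{-c(\delta)|\Lambda_\eps|}$ for $\gamma$ small; second, $I^\gamma_x$ varies on scale $\gamma^{-1}\gg L$, and the sites where $\abs{I^\gamma_x-u}>\zeta'$, having by Lemma~\ref{pedro} density $\le\delta$ off an $e^{-c|\Lambda_\eps|}$ event and forming $\gamma^{-1}$-scale clusters, meet only a fraction $O(\delta)$ of the boxes $\Delta_{L,j}$. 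Call $\Delta_{L,j}$ \emph{regular} if no such bad site lies within $\gamma^{-1}$ of it; then all but a fraction $O(\delta)$ of the $L$-boxes are regular, and on a regular $\Delta_{L,j}$ the Kac term of \eqref{hamil}, conditionally on $\sigma$ in $\Delta_{L,j}^c$, reduces to an external field of size $O(\zeta')$ that is essentially constant across $\Delta_{L,j}$ (it varies by $O(\gamma L)$); hence, conditionally on $\sigma_{\Delta_{L,j}^c}$, the law of $\sigma_{\Delta_{L,j}}$ is comparable to the nearest-neighbour Ising measure in $\Delta_{L,j}$ with that boundary condition and zero field, up to a factor $e^{\pm\eta|\Delta_{L,j}|}$ where $\eta\to0$ as $\zeta',\gamma\to0$.

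The core is the nearest-neighbour analysis at $d=2$, $\beta>\log\sqrt5$. Conditioning once more on $\sigma$ outside a box $\Delta_{K,i}$ contained in a regular $L$-box, the law of $\sigma_{\Delta_{K,i}}$ is comparable to the nearest-neighbour measure in $\Delta_{K,i}$ with the induced boundary condition and a field of size $O(\zeta')$, and here one uses: (a) for $\beta>\log\sqrt5$ the probability of having no $\tau$-circuit for either sign is $\le Ce^{-c\sqrt K}$, \emph{uniformly} in that boundary condition, the absence of a circuit forcing a $\pm$-crossing of the annulus $\Delta_{K,i}\setminus\Delta_{K,i}^0$ of width $\sqrt K$, a Peierls event that the hypothesis controls with room to spare; (b) conditionally on a $\tau$-circuit being present, $\Delta_{K,i}^0$ is screened and its law is comparable to $\mu^{\nn}_{0,\tau}$, so that $\abs{|\Delta_K|^{-1}\sum_{x\in\Delta_{K,i}}f_x-\E_{\mu^{\nn}_{0,\tau}}(f)}>\zeta$ has conditional probability $\le e^{-c(\zeta)K}$ (this is \eqref{tyty} localised to a box, cf.\ \cite{E}) and, likewise, $\abs{m_{\Delta_{K,i}}(\sigma)-\tau m_\beta}\le O(K^{-1/2})$ off a set of conditional probability $\le e^{-cK}$. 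Consequently a $K$-box inside a regular $L$-box which carries a $\tau$-circuit is $(\zeta,\tau)$-good with magnetization $\tau m_\beta+O(K^{-1/2})$ off an $e^{-c(\zeta)K}$ event, while a $(\zeta,+)$-good box necessarily carries a $+$-circuit (it must carry some circuit, else it is bad, and a $-$-circuit would pin its $f$-average near $\E_{\mu^{\nn}_{0,-}}(f)$, which for $\zeta$ small is incompatible with $(\zeta,+)$-goodness unless $\E_{\mu^{\nn}_{0,+}}(f)=\E_{\mu^{\nn}_{0,-}}(f)$, in which case (ii) is void).

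To assemble: since the boxes $\Delta_{K,i}$ are disjoint, the event ``$\Delta_{K,l}$ is bad'' for $l\ne i$ is measurable with respect to $\sigma_{\Delta_{K,i}^c}$, so by (a) and the uniform bound the number of bad $K$-boxes inside regular $L$-boxes exceeds $\delta N_K/2$ only on an event of probability $\le\binom{N_K}{\lceil\delta N_K/2\rceil}(Ce^{-c\sqrt K})^{\lceil\delta N_K/2\rceil}$, which for $K$ large is $\le e^{-C(\zeta,\delta)|\Lambda_\eps|}$; together with the fact that bad $K$-boxes inside non-regular $L$-boxes number $\le O(\delta)N_K$ off an $e^{-c|\Lambda_\eps|}$ event, this gives (i). For (ii), write $\rho_\tau$ for the density of $K$-boxes carrying a $\tau$-circuit. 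By the previous paragraph, $\abs{N^{\good}_{K,\zeta,+}/N_K-\rho_+}\le O(\delta)$ off an exponentially small event and $\rho_++\rho_-\ge1-\delta$ there (by (i)); summing $m_{\Delta_{K,i}}$ over all $K$-boxes and using $m_{\Delta_{K,i}}\approx\pm m_\beta$ on circuit boxes yields $m_{\Lambda_\eps}(\sigma)=m_\beta(\rho_+-\rho_-)+O(\delta+K^{-1/2})$, hence $\rho_+=\tfrac12\bigl(1+m_{\Lambda_\eps}(\sigma)/m_\beta\bigr)+O(\delta+K^{-1/2})$, off an exponentially small event. Combining with the first fact of the first paragraph, $m_{\Lambda_\eps}(\sigma)=u+O(\delta)$ there, so $N^{\good}_{K,\zeta,+}/N_K=\tfrac12(1+u/m_\beta)+O(\delta+K^{-1/2})=\lambda_u+O(\delta+K^{-1/2})$ by \eqref{conv}. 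Choosing the internal precisions small, then $K$ large, then $\gamma$ and $\eps$ small makes every error $\le\delta$ and yields (ii).

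I expect the main obstacle to be step (b) in precisely the form needed: showing, with an exponential rate and uniformly in the boundary condition, that conditioning on a $\tau$-circuit genuinely forces the interior of $\Delta_{K,i}$ to resemble the pure phase $\mu^{\nn}_{0,\tau}$, both for the local observable $f$ and for the magnetization. This is where $d=2$ and $\beta>\log\sqrt5$ are essential; the underlying Peierls and surface-order large-deviation estimates are classical (they already enter \eqref{conv}--\eqref{p1p1} and \eqref{tyty}, cf.\ \cite{E}), and the more delicate geometric bookkeeping is deferred to Appendix~\ref{appB}. All remaining ingredients --- the coarse-graining at scale $L$, the reduction of the Kac term to a small effective field via Lemma~\ref{pedro}, and the management of the parameters $\zeta',\zeta,\delta,\eps,\gamma$ and scales $K\ll L\ll\gamma^{-1}$ --- follow the pattern of Sections~\ref{sec3} and \ref{sld} and of the already-treated case $|u|>m_\beta$.
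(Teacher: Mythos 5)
Your overall architecture matches the paper's in its essential features: introducing the extra scale $L$ with $K\ll L\ll\gamma^{-1}$, using the circuit notion to identify pure phases inside $K$-boxes, bounding the density of bad boxes for (i), and then obtaining (ii) by a magnetization-balance identity that decomposes $m_{\Lambda_\eps}(\sigma)-u$ into contributions from $(\zeta,\pm)$-good and $\zeta$-bad boxes (this is precisely the paper's \eqref{split}). You also correctly single out as the crux the two per-box estimates: (a) exponentially small probability of having no circuit, and (b) concentration of the coarse observable conditional on a circuit.

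However, the way you dispose of the Kac term and the precise form of the per-box bounds diverge from the paper, and it is worth being clear on both counts. For the Kac term, the paper does not pass through ``regular $L$-boxes'' and a small effective field. Instead it conditions on the coarse-grained magnetization profile $\eta\in\mathcal M_L$: after replacing $J_\gamma$ by $J^{(L)}_\gamma$ with error $O(\gamma^a|\Lambda_\eps|)$, the Kac energy is a function $\bar K^{(L)}(\eta)$ of $\eta$ alone, so it factors out of both numerator and denominator (cf.\ \eqref{reduction}--\eqref{splitpartition}), and one is left with conditional Ising probabilities at $h=0$. This requires the denominator estimate \eqref{lb}, proved in Appendix~\ref{denominator}, which your route avoids --- but your route instead leaves an $O(\zeta')$ effective field whose influence on circuit probabilities and on the interior concentration would need to be tracked; moreover your event ``regular $L$-box'' is not $\mathcal F_{\Delta_{L,j}^c}$-measurable, since $I^\gamma_x$ for $x$ within $\gamma^{-1}$ of $\Delta_{L,j}$ depends on spins \emph{inside} $\Delta_{L,j}$. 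These are fixable but they are genuine extra work that the paper's conditioning on $\eta$ simply sidesteps.

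The more substantive gap is in step (b). You claim that, conditionally on a $\tau$-circuit, $\bigl||\Delta_K|^{-1}\sum_{x\in\Delta_{K,i}}f_x-\E_{\mu^{\nn}_{0,\tau}}(f)\bigr|>\zeta$ has conditional probability $\le e^{-c(\zeta)K}$ \emph{uniformly over the circuit's geometry and the exterior configuration}. The paper does not prove (and does not need) such an exponential bound. Lemma~\ref{lem1} gives only the \emph{polynomial} bound $c\,\zeta^{-2}K^{-d}$ per box, obtained by a two-point Chebyshev estimate using \eqref{prel1}--\eqref{prel2} restricted to the inner region $\Delta_K^{00}$, exactly because the conditional measure $\mu^{\nn}_{C,0,\tau}$ lives in an irregular random domain $C$ for which no off-the-shelf large-deviation theorem at fixed rate applies uniformly. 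The polynomial per-box factor suffices because, iterated over $|I|\ge\delta N_{K,L}/2$ boxes (via \eqref{iterate}), it still produces exponential decay in $N_{K,L}$ once $K$ is large enough. So the correct way to close step (b) is not the stronger exponential claim you would like, but the Chebyshev argument; if you insist on the exponential version you would have to establish a surface-order large-deviation estimate uniform in the circuit shape, which is considerably more than what \eqref{tyty} and \cite{E} give you directly. Similarly, your step (a) asserts a Peierls bound $Ce^{-c\sqrt K}$ \emph{uniformly in the boundary condition}; the paper realizes this uniformity by passing to the random-cluster representation, using stochastic domination of the FK measure by the Bernoulli measure $B_\rho$ on the decreasing event ``no circuit,'' and a path-counting argument --- this is Lemma~\ref{lem2} and is exactly where the hypothesis $\beta>\log\sqrt5$ (equivalently $3(1-\rho)<1$) enters. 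In short: your structural plan is right and your diagnosis of where the difficulty lies is accurate, but the per-box bounds are stated in a stronger form than you can justify, and the paper's actual mechanism (Chebyshev for Lemma~\ref{lem1}, random-cluster domination for Lemma~\ref{lem2}, coarse-grained conditioning for the Kac term) is both different and more parsimonious.
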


\bigskip
Before giving its proof, we see how the case $|u|\leq m_{\beta}$ follows from it.
For $\zeta,\delta'>0$ (they will later depend on $\delta$), \eqref{kjkj} 
is bounded by
\begin{align}\label{urur}
\mu_{\Lambda_\eps,\gamma, \alpha} \bigg( \bigg| \frac{1}{|\Lambda_\eps|}\sum_{x\in\Lambda_\eps}f_x-\E_{G_u}(f) \bigg|>\delta,\frac{N^\bad_{K,\zeta}}{N_K}\leq \delta'\bigg)+
\mu_{\Lambda_\eps,\gamma, \alpha}\bigg(\frac{N^\bad_{K,\zeta}}{N_K}> \delta'\bigg).
\end{align}
The exponential bound for the second term is given by Lemma \ref{prop_lem_1}.
To control the first one, we decompose the average as follows:
\begin{align}
\frac{1}{|\Lambda_\eps|}\sum_{x\in\Lambda_\eps}f_x-\E_{G_u}(f) 
=\frac{1}{N_K}\sum_{i=1}^{N_K}\bigg(\frac{1}{|\Delta_K|}\sum_{x\in\Delta_{K,i}}f_x-\E_{G_u}(f)\bigg).
\end{align}
Take $\delta'=\frac{\delta}{4\norm{f}_\infty}$ and observe that, for $\sigma$  such that $\frac{N^\bad_{K,\zeta}(\sigma)}{N_K}\leq \delta'$, we have
\begin{align}
\Big|\frac{1}{|\Lambda_\eps|}\sum_{x\in\Lambda_\eps}f_x-\E_{G_u}(f)\Big|\leq
\Big|\frac{1}{N_K}\sum_{i: \, \Delta_{K,i}\text{ is }\zeta\text{-good}}
\Big(\frac{1}{|\Delta_K|}\sum_{x\in\Delta_{K,i}}f_x-\E_{G_u}(f)\Big)\Big|+\frac{\delta}{2}.
\end{align}
Then the first term of \eqref{urur} is bounded by
\begin{align}\label{kmkm}
\mu_{\Lambda_\eps,\gamma, \alpha} \bigg(
\bigg|\frac{1}{N_K}\sum_{i: \, \Delta_{K,i}\text{ is }\zeta\text{-good}}
\bigg(\frac{1}{|\Delta_K|}\sum_{x\in\Delta_{K,i}}f_x-\E_{G_u}(f)\bigg)\bigg|
>\frac{\delta}{2},\frac{N^\bad_{K,\zeta}}{N_K}\leq \delta'\bigg).
\end{align}
Subtracting and adding $\E_{\mu^{\nn}_\tau}(f)$, we have 
\begin{align}
&\bigg|\frac{1}{N_K} \  \sum_{i: \, \Delta_{K,i}\text{ is }\zeta\text{-good}} \ 
\bigg(\frac{1}{|\Delta_K|}\sum_{x\in\Delta_{K,i}}f_x-\E_{G_u}(f)\bigg)\bigg|\nonumber
\\[0.2cm]
& \ \ \ \leq \ \frac{1}{N_K} \sum_{\tau=\pm}   \ \sum_{i: \, \Delta_{K,i}\text{ is }(\zeta,\tau)\text{-good}}
\ \bigg|\frac{1}{|\Delta_K|}\sum_{x\in\Delta_{K,i}}f_x-\E_{\mu^{\nn}_\tau}(f)\bigg|
+2\norm{f}_\infty \bigg|\frac{N_{K,\zeta,+}^\good}{N_K}-\lambda_u\bigg|.
\end{align}
Choosing $\zeta=\frac{\delta}{4}$, the first term in the last expression is smaller than $\frac{\delta}{4}$, thus \eqref{kmkm} is bounded by
\begin{align}
\mu_{\Lambda_\eps,\gamma,\alpha}\Big(
2\norm{f}_\infty \Big|\frac{N_{K,\zeta,+}^\good}{N_K}-\lambda_u\Big|>\frac{\delta}{4}
,\frac{N^\bad_{K,\zeta}}{N_K}\leq \delta'\Big)\leq 
\mu_{\Lambda_\eps,\gamma,\alpha}
\bigg(2\norm{f}_\infty \Big|\frac{N_{K,\zeta,+}^\good}{N_K}-\lambda_u\Big|>\frac{\delta}{4}\bigg)
\end{align}
which, by Lemma \ref{prop_lem_1}, decays exponentially.
\qed

\begin{proof}[Proof of Lemma \ref{prop_lem_1}] \

(i) \ \ We first notice that the criterion for a box to be ``bad'' is based only on the nearest-neighbour interaction part of the measure.
Therefore, instead of estimating \eqref{prove} using $\mu_{\Lambda_\eps, \gamma,\alpha}$,
we reduce ourselves to an estimate using only the Ising part.
To do that, we introduce another intermediate scale $L$ of order $\gamma^{-1+a}$, for $a>0$, 
and we first condition over all possible values of the
magnetization in this scale: we divide $\Lambda_\eps$ into boxes $\Delta_{L,1},\ldots,\Delta_{L,N_L}$, $N_L=(\eps L)^{-d}$ (recall \eqref{N_K}) and, in each box $\Delta_{L,i}$,
the new order parameter 
$m_{\Delta_{L,i}}(\sigma)$ takes values in
$I_{|\Delta_{L,i}|}$.
We denote this new configuration space by $\mathcal M_{L} \defi \prod_{i=1}^{N_L}I_{|\Delta_{L,i}|}$.
Then, by conditioning on a set of configurations with a given
average magnetization in $\mathcal M_L$,
the Kac part of the Hamiltonian is essentially constant so  we are only left 
with the nearest-neighbour
interaction.

To proceed with this plan, we follow the coarse-graining procedure as in Section \ref{ttfree};
 recall the effective interaction $ \bar J_{\gamma}^{(L)}$ in the new scale $L$ given in \eqref{Jbar}.
For $\eta \defi \{\eta_i\}_i\in\mathcal M_{L}$, recalling \eqref{cg},
we denote the new coarse-grained Hamiltonian
\begin{equation}\label{cgH}
\bar K^{(L)}_{\Lambda_{\eps},\gamma,\alpha}(\eta) \defi L^d \sum_i\bigg(\sum_{j}  \bar J_{\gamma}^{(L)}(i,j)\eta_j-\alpha\bigg)^2
\end{equation}
(note that $\alpha$ is constant).
Recalling the error \eqref{Kacerror1},
for $L=\gamma^{-1+a}$, we obtain that
\begin{align}\label{reduction}
\mu_{\Lambda_\eps,\gamma,\alpha}(N^{\bad}_{K,\zeta}>\delta N_K)
 &= 
\sum_\sigma \mathbf 1_{\{N^{\bad}_{K,\zeta}>\delta N_K\}}
\frac{1}{Z_{\Lambda_\eps,\gamma,\alpha}}e^{-\beta H_{\Lambda_{\eps}}^{\nn}(\sigma)}
e^{-\beta K_{\Lambda_{\eps},\gamma, \alpha}(\sigma)}\nonumber\\
&=\sum_{\eta\in\mathcal M_{L}}e^{-\beta \bar K^{(L)}_{\Lambda_{\eps},\gamma, \alpha}(\eta)}
\frac{1}{Z_{\Lambda_\eps,\gamma,\alpha}}
\sum_{\substack{\sigma: \,\forall i,\\ m_{\Delta_{L,i}}(\sigma_i)=\eta_i}}
\mathbf 1_{\{N^{\bad}_{K,\zeta}>\delta N_K\}}
e^{-\beta H_{\Lambda_{\eps}}^{\nn}(\sigma)}
e^{C\gamma^{a}|\Lambda_\eps|},
\end{align}
where
\begin{eqnarray}\label{splitpartition}
Z_{\Lambda_\eps,\gamma,\alpha}
& = &
\sum_{\eta\in\mathcal M_{L}}e^{-\beta \bar K^{(L)}_{\Lambda_{\eps},\gamma, \alpha}(\eta)}
\sum_{\substack{\sigma: \,\forall i,\\ m_{\Delta_{L,i}}(\sigma_i)=\eta_i}}
e^{-\beta H_{\Lambda_{\eps}}^{\nn}(\sigma)}
e^{C\gamma^{a}|\Lambda_\eps|}\nonumber\\
& = &
e^{C \gamma^{a} |\Lambda_\eps|}
Z^{\nn}_{\Lambda_\eps,0}
\sum_{\eta\in\mathcal M_{L}}e^{-\beta \bar K^{(L)}_{\Lambda_{\eps},\gamma, \alpha}(\eta)}
\mu^{\nn}_{\Lambda_\eps,0}(\{m_{\Delta_{L,i}}=\eta_{i}\}_{i=1}^{N_{L}}).
\end{eqnarray}
Note that in the splitting in \eqref{reduction} we do not specify the boundary conditions, as with an extra
lower order (surface) error we can choose them ad libitum.
Hence, we have to estimate $\mu_{\Lambda_\eps,0}^{\nn}(\{N^{\bad}_{K,\zeta}>\delta N_K\})$.
We split it into a product over the measures $\mu_{\Delta_{L,i},0,+}^{\nn}$ assuming 
$+$ boundary conditions and making an error of lower order. 
Then, we focus in a box $\Delta_L$ and 
denote by $N_{K,L}$ (respectively $N^{\bad}_{K,L,\zeta}$) the number of boxes 
(respectively bad boxes) 
of size $K$ in $\Delta_L$.
In order to conclude,
it suffices to show that there is $r(\delta, \zeta, K)>0$ such that
\begin{equation}\label{step1}
\mu_{\Delta_L,0,+}^{\nn}(\{N^{\bad}_{K,L,\zeta}>\delta N_{K,L}\})\leq e^{-N_{K,L} r(\delta,\zeta,K)}.
\end{equation}
The proof of \eqref{step1} is lengthy and it is outlined below, after the end of the proof of Lemma \ref{prop_lem_1}.
Furthermore,
this decaying estimate should win against the accumulating errors of the order $\gamma^a |\Lambda_\eps|$ in \eqref{reduction} and \eqref{splitpartition}.
This is true since 
$\gamma^a K^d
\ll  r(\delta,\zeta,K)$, for $\gamma$ small enough, after using the fact that $|\Delta_L|=N_{K,L} K^d$.
We also need a lower bound of \eqref{splitpartition}. For that, it suffices to show that for every $i$:
\begin{equation}\label{lb}
\mu^{\nn}_{\Delta_{L,i},0}(\{m_{\Delta_{L,i}}=\eta_{i}\})
\geq e^{-c L^{d-1}}.
\end{equation}
The proof is given in Appendix \ref{denominator}, concluding the proof of item (i) of Lemma \ref{prop_lem_1}.
\bigskip

(ii) \ To prove \eqref{prove1}, for $u$ constant, in a box $\Lambda_\eps$ we have: 
\begin{eqnarray}\label{split}
\frac{1}{|\Lambda_\eps|}
\sum_{x\in \Lambda_\eps}\sigma(x)-u
&=&
\frac{1}{N_K}\sum_{i: \, \Delta_{K,i}\text{ is }(\zeta,+)\text{-good}}
\Big(
\frac{1}{|\Delta_K|}\sum_{x\in \Delta_K}\sigma(x)-m_\beta\Big)
+\Big(\frac{N_{K,\zeta,+}^\good}{N_K}-\lambda_u
\Big)
m_\beta
\nonumber\\
&& +
\frac{1}{N_K}
\sum_{i: \, \Delta_{K,i}\text{ is }(\zeta,-)\text{-good}}
\Big(
\frac{1}{|\Delta_K|}\sum_{x\in \Delta_K}\sigma(x)+m_\beta\Big)
+\Big(\frac{N_{K,\zeta,-}^\good}{N_K}-(1-\lambda_u)
\Big)
(-m_\beta)
\nonumber\\
&&
+
\frac{1}{N_K}
\sum_{i: \, \Delta_{K,i}\text{ is }\zeta \text{-bad}}
\frac{1}{|\Delta_K|}\sum_{x\in \Delta_K}\sigma(x),
\end{eqnarray}
Moreover,
\begin{equation*}
\frac{N_{K,\zeta,-}^\good}{N_K}-(1-\lambda_u)=-\left(\frac{N_{K,\zeta,+}^\good}{N_K}-\lambda_u\right)-\frac{N^{\bad}_{K,\zeta}}{N}.
\end{equation*}
From Definition \ref{defbad}, in the good boxes we have a circuit of $\pm$ spins.
Then, using \eqref{prel1}, for every $x\in \Delta^0_{K}$ we have that
\begin{equation}\label{est_sign}
\E_{\mu^{\nn}_{\Delta_K,0,\pm}}[\sigma(x)] = \pm m_\beta + K^{d}e^{-CK},
\end{equation}
for $K$ large and a generic box $\Delta_K$.
We consider the measure $\mu_{\Lambda_\eps,\gamma,\alpha}$ 
and use the estimate \eqref{exponential_estimate}.
We split the measure over the boxes $(\Delta_{K,i})_i$ like previously and, using \eqref{split}
as well as the estimate \eqref{prove}, we obtain \eqref{prove1}.
This concludes the proof of Lemma \ref{prop_lem_1}.
\end{proof}

\medskip

In the sequel, we first prove the remaining estimate \eqref{step1}.
Here we present the strategy and state the main lemmas. For the proofs we refer to Appendix \ref{pfoflem1} and \ref{pfoflem2}.
The section will conclude with the proof of \eqref{third}.

\medskip

\noindent {\it Proof of \eqref{step1}.}
Given a box $\Delta_L$, let $I\subset \{1,\ldots, N_{K,L}\}$ denote the indices of the boxes $\Delta_{K}$ within it.

\begin{definition}\label{twosets}
Given $I\subset \{1,\ldots, N_{K,L}\}$ and $\underline a\in\{-,+\}^{I}$, we define $\mathcal X'_{I,\underline a}$ to be the set of configurations
where there is some circuit around $\Delta_{K,i}^0$
for all $i\in I$ and \eqref{criterion} is true.
On the other hand, we define $\mathcal X''_I$ to be the set of configurations for
which there is no circuit for any of the boxes in $I$.
\end{definition}
Asking for more than $\delta N_{K,L}$, $0<\delta<1$, many bad boxes is equivalent to the fact that
at least one of the two cases described in Definition \ref{twosets}
has to occur more than $\frac{\delta N_{K,L}}{2}$, hence:
\begin{equation}\label{split1_5}
\{N^{\bad}_{K,L,\zeta}>\delta N_{K,L}\}
\subset
\Big(
\bigcup_{(I,\underline a): |I|\geq\frac{\delta}{2} N_{K,L}}\mathcal X'_{I,\underline a}
\Big)
\cup
\Big(
\bigcup_{I: |I|\geq\frac{\delta}{2} N_{K,L}}\mathcal X''_{I}
\Big)
\end{equation}
To estimate the first contribution, we have the following lemma:

\begin{lemma}\label{lem1}
Consider a box $\Delta_L$ divided into $N_{K,L}$ smaller boxes $\Delta_K$, with $K\ll L$.
There is a positive constant $c$ so that, for any $I\subset\{1,\ldots,N_{K,L}\}$ and $i\in I$, the following is true:
\begin{equation}\label{chi'}
\mu_{\Delta_L,0,+}^{\nn}(\mathcal X'_{I,\underline{a}})\leq c \zeta^{-2}K^{-d}
\mu_{\Delta_L,0,+}^{\nn}(\mathcal X'_{I\setminus i, \underline{a}}),
\end{equation}
where $\zeta$ is the precision parameter in the criterion \eqref{criterion} of bad boxes.
\end{lemma}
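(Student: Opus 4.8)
The plan is to peel the box $\Delta_{K,i}$ off the joint constraint defining $\mathcal X'_{I,\underline a}$ by conditioning on the circuit it carries. Once $\sigma$ has an $a_i$-circuit in $\Delta_{K,i}$, the spins enclosed by it form a genuine finite-volume $a_i$-phase, so, modulo a boundary layer of relative volume $O(K^{-1/2})$ (recall $d(\Delta_{K,i}^0,\Delta_{K,i}^c)=\sqrt K$ and that $f=g(m_{B_R})$ is local with $R=O(1)$), the average $\bar f_i\defi\frac{1}{|\Delta_K|}\sum_{x\in\Delta_{K,i}}f_x$ has conditional mean within $O(K^{-1/2})$ of $\E_{\mu^{\nn}_{0,a_i}}(f)$ and conditional variance $O(K^{-d})$. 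Hence, conditionally, the event ``$\Delta_{K,i}$ is $\zeta$-bad despite carrying an $a_i$-circuit'' has probability $\le c\,\zeta^{-2}K^{-d}$ by Chebyshev; integrating this against the remaining constraints — which involve only the boxes $\Delta_{K,j}$, $j\in I\setminus i$, together with the bare existence of an $a_i$-circuit in $\Delta_{K,i}$ — yields \eqref{chi'}.

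Concretely, on $\{\sigma$ has an $a_i$-circuit in $\Delta_{K,i}\}$ let $\Gamma$ be the outermost such circuit and let $\mathcal F_\Gamma$ be the $\sigma$-algebra generated by $\Gamma$ and by $\sigma$ outside the region $\mathrm{Int}(\Gamma)$ it encloses; write $B_i$ for the event \eqref{criterion}. By the DLR/Markov property, conditionally on $\mathcal F_\Gamma$ the law of $\sigma_{\mathrm{Int}(\Gamma)}$ is the finite-volume Ising measure $\mu^{\nn}_{\mathrm{Int}(\Gamma),0,a_i}$ with constant boundary condition $a_i$, and $\Delta_{K,i}^0\subseteq\mathrm{Int}(\Gamma)\subseteq\Delta_{K,i}$. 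Two standard properties of the $\pm$ phases now do the work. First, exponential decay of truncated correlations in the $a_i$-phase in $d=2$ for $\beta>\log\sqrt5$, uniform in the volume and the constant boundary condition, gives $\sum_{y}|\mathrm{Cov}_{\mu^{\nn}_{\mathrm{Int}(\Gamma),0,a_i}}(f_0,f_y)|\le C_f<\infty$; splitting $\bar f_i$ according to whether $B_R(x)\subseteq\mathrm{Int}(\Gamma)$ and using $|\{x\in\Delta_{K,i}:B_R(x)\not\subseteq\mathrm{Int}(\Gamma)\}|=O(K^{d-1/2})$, summability of correlations (applied twice) yields $\mathrm{Var}(\bar f_i\mid\mathcal F_\Gamma)\le 2C_fK^{-d}$ for $K$ large. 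Second, the relaxation bound \eqref{prel1} together with monotonicity in the boundary condition gives $\E_{\mu^{\nn}_{\mathrm{Int}(\Gamma),0,a_i}}(f_x)=\E_{\mu^{\nn}_{0,a_i}}(f)+O(e^{-c\,d(x,\Gamma)})$, whence $\E(\bar f_i\mid\mathcal F_\Gamma)=\E_{\mu^{\nn}_{0,a_i}}(f)+O(K^{-1/2})$. On $B_i\cap\{a_i\text{-circuit in }\Delta_{K,i}\}$ the criterion \eqref{criterion} forces $|\bar f_i-\E_{\mu^{\nn}_{0,a_i}}(f)|>\zeta$, hence $|\bar f_i-\E(\bar f_i\mid\mathcal F_\Gamma)|>\zeta/2$ for $K$ large; Chebyshev then gives, on $\{\Gamma=\text{outermost }a_i\text{-circuit}\}$,
\[
\mu^{\nn}_{\Delta_L,0,+}(B_i\mid\mathcal F_\Gamma)\le\frac{2C_fK^{-d}}{(\zeta/2)^2}=8C_f\,\zeta^{-2}K^{-d}.
\]

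To conclude, the events ``$\Delta_{K,j}$ carries an $a_j$-circuit and \eqref{criterion} holds at $j$'' for $j\in I\setminus i$, as well as $\{\Gamma=\text{outermost }a_i\text{-circuit}\}$, depend only on $\sigma$ in the $R$-neighbourhoods of the $\Delta_{K,j}$ ($j\ne i$) and in $\Delta_{K,i}\setminus\mathrm{Int}(\Gamma)$, which for $\sqrt K\gg R$ do not meet $\mathrm{Int}(\Gamma)$ (this is exactly what the buffer $\sqrt K$ in the definition of $\Delta_{K,i}^0$ secures, after a harmless shrinking of $\mathrm{Int}(\Gamma)$ if needed); hence they are $\mathcal F_\Gamma$-measurable. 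Writing $\mathbf 1_{\mathcal X'_{I,\underline a}}=\mathbf 1_{\mathcal X'_{I\setminus i,\underline a}}\,\mathbf 1_{\{a_i\text{-circuit in }\Delta_{K,i}\}}\,\mathbf 1_{B_i}$ (with $\underline a$ restricted to $I\setminus i$ in the first factor) and decomposing $\mathbf 1_{\{a_i\text{-circuit in }\Delta_{K,i}\}}=\sum_\Gamma\mathbf 1_{\{\Gamma=\text{outermost}\}}$ (a disjoint union),
\[
\mu^{\nn}_{\Delta_L,0,+}(\mathcal X'_{I,\underline a})=\sum_\Gamma\E\Bigl[\mathbf 1_{\mathcal X'_{I\setminus i,\underline a}}\,\mathbf 1_{\{\Gamma=\text{outermost}\}}\,\mu^{\nn}_{\Delta_L,0,+}(B_i\mid\mathcal F_\Gamma)\Bigr]\le 8C_f\,\zeta^{-2}K^{-d}\,\mu^{\nn}_{\Delta_L,0,+}(\mathcal X'_{I\setminus i,\underline a}),
\]
which is \eqref{chi'} with $c=8C_f$, a constant depending only on $\beta$, $d$ and $f$ (the estimate being used, as usual, for $K$ large).

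The main obstacle is the conditional variance bound $\mathrm{Var}(\bar f_i\mid\mathcal F_\Gamma)=O(K^{-d})$: it rests on summable — ideally exponential — decay of truncated correlations of the local observable $f$ in the pure phases, uniformly over the random enclosed volume $\mathrm{Int}(\Gamma)$ and its constant boundary condition, and this is precisely the point at which the hypotheses $d=2$ and $\beta>\log\sqrt5$ are used (through a convergent low-temperature contour/cluster expansion). The measurability bookkeeping for the circuits, and the reduction of correlation decay for $f=g(m_{B_R})$ to the two-point function, are routine.
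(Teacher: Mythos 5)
Your argument is correct and follows essentially the same route as the paper: you decouple the target box by decomposing over the outermost $a_i$-circuit, use the DLR/Markov property so the interior law is the finite-volume Ising measure with constant boundary condition, control the conditional mean via the relaxation bound \eqref{prel1} and the conditional variance via exponential decay of truncated correlations \eqref{prel2}, then finish with Chebyshev. The paper packages the conditioning as a disjoint sum over circuit classes $C\in\mathcal K_i$ with a two-point second-moment computation in one display, and you phrase the same thing through a conditional $\sigma$-algebra $\mathcal F_\Gamma$ and a mean/variance split, but the two arguments are the same in substance and in where the hypotheses $d=2$, $\beta>\log\sqrt5$, $R\ll\sqrt K$ are used.
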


\medskip

\medskip

To obtain \eqref{step1}, we need to iterate the result of Lemma \ref{lem1} and get
\begin{equation}\label{iterate}
\mu_{\Delta_L,0,+}^{\nn}(\cup_{I:\, |I|\geq \delta N_{K,L} / 2}\mathcal X'_{I,\underline{a}})
\leq
\sum_{I:\, |I|\geq \delta N_{K,L} /2}(c \zeta^{-2} K^{-d})^{|I|}
\leq
2^{2 N_{K,L}} (c \zeta^{-2} K^{-d})^{\delta N_{K,L} /2},
\end{equation}
which agrees with the one in the right hand side of \eqref{step1} since
$r(\delta, \zeta, K):=-\delta \log (\zeta^{-2}K^{-d})$ is sufficiently large by considering
$K$ large for $\zeta$ and $\delta$ fixed.

\bigskip

To find an estimate for the second contribution in \eqref{split1_5}, 
we use the random-cluster formulation. 
We give a complete description of the method in Appendix \ref{pfoflem2}, where we also provide the proof of the
following lemma:

\begin{lemma}\label{lem2}
Suppose $\beta>\log\sqrt 5$.
For every $\delta>0$, there exists $C=C(\delta)>0$ such that the exponential bound
\begin{align}\label{chi''}
\mu_{\Delta_L,0,\emptyset}^{\nn}(\cup_{I:\, |I|\geq \delta N_{K,L} }\mathcal X''_{I})
\leq e^{-CN_{K,L}}
\end{align}
holds for $K$ (and $L$) large enough.
\end{lemma}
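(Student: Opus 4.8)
The plan is to pass to the Fortuin--Kasteleyn (random-cluster) representation. Write $\phi$ for the random-cluster measure on the edge set of $\Delta_L$ with free boundary conditions and parameters $p=1-e^{-2\beta}$, $q=2$, which is the FK representation of $\mu^{\nn}_{\Delta_L,0,\emptyset}$. The point of this detour is that $\mathcal X''_i$ (``no circuit in $\Delta_{K,i}$'') is \emph{not} monotone in the spins --- it asks for a $+$-crossing \emph{and} a $-$-crossing of the annulus $\Delta_{K,i}\setminus\Delta_{K,i}^0$ --- so correlation inequalities are useless on it directly, whereas in the FK picture it is contained in a single \emph{decreasing} event. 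Concretely, shrinking the outer box by one lattice spacing to $\Delta'_{K,i}$, let $B_i$ be the event that the FK configuration has no open circuit surrounding $\Delta_{K,i}^0$ inside $\Delta'_{K,i}\setminus\Delta_{K,i}^0$; this event depends only on edges strictly interior to $\Delta_{K,i}$. First I would check that $\mathcal X''_i\subset B_i$: if there is an open FK circuit around $\Delta_{K,i}^0$ inside the annulus, then after the random colouring of clusters all its vertices carry the same sign $\tau$, so $\sigma$ has a $\tau$-circuit in the sense of Definition \ref{circuit}; hence the absence of any circuit forces $B_i$.

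Next I would prove the single-box estimate $\phi^{\mathrm{free}}_{\Delta_{K}}(B)\le\rho(K)$ with $\rho(K)\to 0$ as $K\to\infty$, where $\phi^{\mathrm{free}}_{\Delta_K}$ is the free-b.c.\ FK measure on a box of side $K$ and $B$ the corresponding no-circuit event. On the complement of $B$ there is no open circuit, hence (by planar duality for the random-cluster model) a dual-open path joining the two boundary components of the annulus, i.e.\ a dual self-avoiding path of length at least $\sqrt K-1$. Since $B$ is decreasing, the comparison inequality for random-cluster with $q=2$ bounds $\phi^{\mathrm{free}}_{\Delta_K}(B)$ by the Bernoulli probability of such a dual crossing, with dual-edge density $1-p'=2e^{-2\beta}/(1+e^{-2\beta})$. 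A Peierls sum over dual self-avoiding crossing paths (at most $O(K)$ starting points and at most $3$ onward choices per step) then gives $\phi^{\mathrm{free}}_{\Delta_K}(B)\le CK\,\big(3(1-p')\big)^{\sqrt K-1}$; the hypothesis $\beta>\log\sqrt 5$ is precisely the inequality $3(1-p')<1$ (equivalently $5e^{-2\beta}<1$), so the series converges and $\rho(K)\le CK\lambda^{\sqrt K}$ with $\lambda<1$.

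The final step is a joint bound over disjoint boxes, and here --- since the $B_i$ are positively correlated, so that FKG points the wrong way --- I would use conditional independence in place of a (nonexistent) correlation inequality. The boxes $\Delta_{K,i}$, $i\in I$, are disjoint and the $B_i$ live on their strict interiors, which are pairwise disjoint; conditioning on the FK configuration on every edge that is not strictly interior to some $\Delta_{K,i}$, $i\in I$ (denote this $\sigma$-algebra $\mathcal F$), the domain Markov property makes the conditional law inside the boxes a product, each factor a random-cluster measure on $\Delta_{K,i}$ with some wired boundary condition. As $B_i$ is decreasing and free boundary conditions are minimal among boundary conditions when $q\ge 1$, one has $\phi(B_i\mid\mathcal F)\le\phi^{\mathrm{free}}_{\Delta_K}(B)\le\rho(K)$ for every conditioning, whence $\mu^{\nn}_{\Delta_L,0,\emptyset}\big(\bigcap_{i\in I}\mathcal X''_i\big)\le\phi\big(\bigcap_{i\in I}B_i\big)\le\rho(K)^{|I|}$. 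A union bound over subsets then yields
\[
\mu^{\nn}_{\Delta_L,0,\emptyset}\Big(\bigcup_{I:\,|I|\ge\delta N_{K,L}}\mathcal X''_{I}\Big)\le\sum_{m\ge\delta N_{K,L}}\binom{N_{K,L}}{m}\rho(K)^{m}\le N_{K,L}\big(2\rho(K)^{\delta}\big)^{N_{K,L}},
\]
which is $\le e^{-C(\delta)N_{K,L}}$ as soon as $K$ is large enough that $\rho(K)<2^{-2/\delta}$; taking $L\gg K$ only serves to make $N_{K,L}$ large.

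The step I expect to be the real obstacle is exactly the interplay flagged above: $\mathcal X''_i$ is non-monotone in the spin variables, so one must first isolate the correct monotone FK proxy $B_i$ and then, because these proxies are positively correlated, replace the missing correlation inequality by the domain-Markov conditional-independence argument together with monotonicity of random-cluster measures in the boundary conditions. A secondary but genuine point is making the single-box Peierls bound quantitative enough to exhibit $\beta>\log\sqrt 5$ as the sharp threshold (through the $q=2$ comparison constant $1-p'=2e^{-2\beta}/(1+e^{-2\beta})$ and the bound $3$ on dual branchings), together with the routine bookkeeping of the surface errors incurred when choosing boundary conditions on the boxes.
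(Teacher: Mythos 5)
Your proposal is correct, and the single-box estimate coincides with the paper's: pass to the FK picture via Edwards--Sokal, observe that ``$\sigma$ has a $\tau$-circuit'' is implied by ``$\omega$ has an open circuit,'' so that $\mathcal X''_i$ is contained in a decreasing FK event, and then run a Peierls/duality argument on the dual path crossing the annulus of width $\sqrt K$, with $3(1-\rho)<1$ identified as exactly the condition $\beta>\log\sqrt 5$ (where $1-\rho=2e^{-2\beta}/(1+e^{-2\beta})$ is the dual Bernoulli density).

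Where you diverge is in the passage from the single-box bound to the many-box bound. The paper does this by applying the stochastic domination $B_\rho\leq_{\rm st}\phi$ \emph{globally}: the event $\bigcup_{|I|\geq\delta N_{K,L}}\mathcal Y''_I$ is decreasing, so $\phi(\cdot)\leq B_\rho(\cdot)$, and since the boxes $\Delta_{K,i}$ are disjoint (hence $E(\Delta_{K,i})$ are disjoint), the Bernoulli measure factorizes across boxes, giving \eqref{rr44} immediately. You instead note correctly that FKG is useless here (the $B_i$ are all decreasing, so positively associated, and FKG gives a lower bound rather than the needed upper bound), and you replace the missing inequality by a domain-Markov conditioning: fix the edges outside the interiors of the $\Delta_{K,i}$, observe the conditional law is a product of boxwise FK measures with induced boundary conditions, and then use monotonicity in boundary conditions (free is minimal for $q\geq1$, so free maximizes decreasing events) to dominate each conditional factor by $\phi^{\rm free}_{\Delta_K}(B)$, which is in turn dominated by the single-box Bernoulli estimate. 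Both routes are sound and yield the same exponent; yours is slightly heavier machinery (domain Markov plus monotonicity in boundary conditions, on top of the comparison to Bernoulli for the single box) in exchange for isolating the factorization step cleanly and making it robust to box adjacency issues, whereas the paper's global-domination step is shorter because Bernoulli independence across disjoint edge sets is immediate. The final Stirling/union-bound bookkeeping is the same in both.

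One small remark: your precaution of shrinking $\Delta_{K,i}$ to $\Delta'_{K,i}$ so that $B_i$ depends only on strictly interior edges is not actually needed, since the paper's $\Delta_{K,i}$ partition $\Delta_L$ and $E(\Delta_{K,i})$ (edges with both endpoints in $\Delta_{K,i}$) are already pairwise disjoint; it is harmless, and does make the domain-Markov conditioning self-evident.
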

Note that here, for simplicity of the proof, we can use empty boundary conditions by making an extra error of smaller order.
From \eqref{split1_5}, \eqref{iterate} and \eqref{chi''}, we conclude the proof of \eqref{step1}. \qed

\bigskip

\subsection{Proof of \eqref{third}}\label{subsec:third}

When $R\to\infty$, for any translation invariant measure $\mu$ (either $\mu^{\nn}_{0,\pm}$ or $\mu^{\nn}_{h(u)}$ for some $|u|>m_\beta$)
we have that
\begin{equation}\label{lastlimit}
\E_{\mu}[g(m_{B_R})]\to g(\E_{\mu}[\sigma(x)]).
\end{equation}
Similarly, if $R$ depends on $\gamma$ and we pass simultaneously to the limit in such
a way that $1\ll R_\gamma\ll \gamma^{-1}$. \qed

\bigskip

\appendix

\section{Homogeneous magnetization}\label{hom}

For the nearest-neighbour interaction
and
for $h\in{\mathbb R}$, we define the finite volume pressure by
\begin{align}
p_{\Lambda_\eps, \beta}(h) \defi  \frac{1}{\beta\abs{\Lambda_{\eps}}}\log{\sum_{ \sigma\in\Omega_{\Lambda_{\eps}} } e^{-\beta H_{\Lambda_{\eps}, h}\pare{\sigma}} }.
\end{align}
Moreover, for $u\in I_{\abs{\Lambda_\eps}}$, we define the finite volume free energy by
\begin{align}\label{kiwi}
f_{\Lambda_{\eps}, \beta}(u)  \defi  
-\frac{1}{\beta\abs{\Lambda_{\eps}}}\log{\sum_{\substack{ \sigma\in\Omega_{\Lambda_{\eps}} \\[0.1cm] m_{\Lambda_\eps}\pare{\sigma}=u}} e^{-\beta H_{\Lambda_{\eps}}^{nn}\pare{\sigma}} }
\end{align}
and extend the domain of $f_{\Lambda_\eps, \beta}$ to $\corch{-1,1}$ by assigning the values that correspond to linear interpolation
between the values of $f_{\Lambda_{\eps}, \beta}$ at the neighbouring points in $I_{\abs{\Lambda_\eps}}$.
We next prove the existence of the infinite volume free energy and pressure.

\begin{theorem}[Free energy and pressure]\label{tfree} 
The sequence of functions $\pare{f_{\Lambda_\eps,\beta}}_\eps$ 
converges point-wise to a function $f_\beta:\corch{-1,1}\rightarrow{\mathbb R}$ called  free energy.
The function
$f_\beta$ is convex and continuous, differentiable in the interior of its domain. Its derivative $f'_\beta$ is continuous, it satisfies $\lim_{u\downarrow -1}f'_\beta\pare{u}=-\infty$ and $\lim_{u\uparrow 1}f'_\beta\pare{u}=\infty$
and it is bounded on compact subsets of $\pare{-1,1}$.
Moreover, the convergence $\lim_{\eps\rightarrow 0}f_{\Lambda_\eps,\beta}=f_\beta$ is uniform.

Similarly, the sequence of functions $\pare{p_{\Lambda_\eps,\beta}}_\eps$ 
converges point-wise to a function $p_\beta:{\mathbb R}\rightarrow{\mathbb R}$ called pressure and it is given by
\begin{align}\label{hompressure}
p_\beta(h)=\sup_{u\in [-1,1]}\llav{uh-f_\beta(u)},
\end{align}
for every $h\in{\mathbb R}$.
\end{theorem}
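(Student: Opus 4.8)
The plan is to follow the classical thermodynamic route: first the pressure by a subadditivity argument, then the canonically constrained free energy by the same argument together with an auxiliary continuity‑in‑$u$ estimate, then the Legendre duality with convexity and uniform convergence, and finally the regularity of $f_\beta$, which is the only step relying on non‑trivial input about the Ising model.

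\textbf{Pressure.} Writing $\eps = 2^{-q}$ and splitting $\Lambda_{q+1}$ into $2^d$ translates of $\Lambda_q$, the interaction between these sub‑boxes --- together with the cost of switching between periodic and free boundary conditions --- is a surface term of relative size $O(2^{-q})$, so $p_{\Lambda_{q+1},\beta}(h) = p_{\Lambda_q,\beta}(h) + O(2^{-q})$ and $(p_{\Lambda_q,\beta}(h))_q$ is a Cauchy sequence; call its limit $p_\beta(h)$. Each $h \mapsto p_{\Lambda_\eps,\beta}(h)$ is convex (its second derivative is $\beta|\Lambda_\eps|$ times the variance of $m_{\Lambda_\eps}$ under $\mu^{\nn}_{\Lambda_\eps, h}$), is $1$‑Lipschitz (its derivative is the mean magnetization, which lies in $[-1,1]$) and is even in $h$ (spin flip); these properties pass to the limit, so $p_\beta$ is convex, $1$‑Lipschitz and even, in particular continuous.

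\textbf{Free energy, duality, uniform convergence.} The same doubling argument applied to $f_{\Lambda_q,\beta}$ with the canonical constraint $m_{\Lambda_q}(\sigma) = \lceil u\rceil_{|\Lambda_q|}$ requires absorbing the fact that the prescribed value is not exactly reproduced under subdivision; this is handled by the elementary estimate $|f_{\Lambda_\eps,\beta}(t) - f_{\Lambda_\eps,\beta}(t')| \le (C + \log|\Lambda_\eps|)/(\beta|\Lambda_\eps|)$ for consecutive $t, t' \in I_{|\Lambda_\eps|}$ (flipping a single spin changes the energy by $O(1)$ and multiplies the number of admissible configurations by at most $|\Lambda_\eps|$), which is Lemma \ref{AA} with the Kac term removed. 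Together with the uniform lower bound $f_{\Lambda_\eps,\beta}(u) \ge -p_{\Lambda_\eps,\beta}(0)$, this gives an approximate subadditivity with summable errors, hence $f_\beta(u) := \lim_{\eps \to 0} f_{\Lambda_\eps,\beta}(u)$ exists for every $u \in [-1,1]$. Grouping configurations by their magnetization gives $Z_{\Lambda_\eps,h}^{\nn} = \sum_{u \in I_{|\Lambda_\eps|}} e^{\beta|\Lambda_\eps|(hu - f_{\Lambda_\eps,\beta}(u))}$, so $p_{\Lambda_\eps,\beta}(h) = \max_u\{hu - f_{\Lambda_\eps,\beta}(u)\} + O(\log|\Lambda_\eps|/|\Lambda_\eps|)$ and, letting $\eps \to 0$, $p_\beta(h) = \sup_{u \in [-1,1]}\{hu - f_\beta(u)\}$, which is the formula \eqref{hompressure} for the pressure. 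Convexity of $f_\beta$ is obtained directly: assigning value $u_1$ to $j$ of the $2^d$ cubic sub‑boxes of $\Lambda_{q+1}$ and $u_2$ to the remaining $2^d - j$, and dropping the interaction across their interfaces, gives in the limit $f_\beta(\lambda u_1 + (1-\lambda)u_2) \le \lambda f_\beta(u_1) + (1-\lambda)f_\beta(u_2)$ for dyadic $\lambda = j/2^d$; since $f_\beta$ is bounded, this extends to all $\lambda$. Hence $f_\beta$ is convex, continuous on $(-1,1)$, and --- being the Legendre conjugate of $p_\beta$ --- lower semicontinuous on $[-1,1]$. On any compact $[a,b]\subset(-1,1)$ the $f_{\Lambda_\eps,\beta}$ are uniformly Lipschitz (there the combinatorial factor in the estimate above contributes only $O(1)$ per step of $I_{|\Lambda_\eps|}$), so the pointwise convergence to the continuous $f_\beta$ is uniform on $[a,b]$; combined with the endpoint estimates below, this gives uniform convergence on $[-1,1]$.

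\textbf{Regularity of $f_\beta$.} Continuity on $(-1,1)$ is already in hand. Continuity at $\pm1$ follows from standard counting estimates for near‑saturated configurations: when $m = u$ with $u$ near $1$ only few spins are misaligned, and lower‑bounding the partition sum in \eqref{kiwi} by a single droplet configuration, respectively upper‑bounding it by the number of such configurations times their maximal Boltzmann weight, yields $\limsup_{u\uparrow1}f_\beta(u) \le f_\beta(1)$ and $\liminf_{u\uparrow1}f_\beta(u)\ge f_\beta(1)$; symmetrically at $-1$. For differentiability one combines $f_\beta = p_\beta^{*}$ with the properties of the magnetization $\varphi(h) = p_\beta'(h)$ recalled in Section \ref{notation}: $\varphi$ is odd, continuous and strictly increasing on each of $(-\infty,0)$ and $(0,\infty)$, with $\varphi(h)\to\pm1$ as $h\to\pm\infty$ and $\varphi(h)\to m_\beta = -\lim_{h\uparrow0}\varphi(h)$ as $h\downarrow0$. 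Strict monotonicity of $\varphi$ off the origin means $p_\beta$ has no affine piece away from the possible corner at $h=0$, so $f_\beta = p_\beta^{*}$ is differentiable at every $u$ with $|u|\ne m_\beta$, with $f_\beta'(\varphi(h)) = h$; on $(-m_\beta,m_\beta)$ the supremum defining $p_\beta^{*}(u)$ is attained at $h=0$, so $f_\beta$ is constant there and differentiable with derivative $0$; and at $u=\pm m_\beta$ the two one‑sided derivatives match (the value $0$ from the flat side, and $\lim_{h\to0^{\pm}}h = 0$ from the other), so $f_\beta\in C^1((-1,1))$. On $(-1,1)$, $f_\beta'$ is thus the generalized inverse of $\varphi$, equal to $0$ on $[-m_\beta,m_\beta]$; its continuity follows from the continuity and strict monotonicity of $\varphi$ away from $0$, and $f_\beta'(u)\to\pm\infty$ as $u\to\pm1$ because $\varphi(h)\to\pm1$ only as $h\to\pm\infty$; boundedness on compact subsets of $(-1,1)$ is then immediate. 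The main obstacle is precisely this last point and, within it, the behaviour of $f_\beta$ near $\pm m_\beta$: it rests on the non‑trivial facts --- continuity and strict monotonicity of the Ising magnetization away from $h=0$, and $\varphi(0^+) = m_\beta$ --- which we take as known, being recalled with references in Section \ref{notation}; everything else is soft convex analysis plus the bookkeeping of summable surface errors, the only mildly delicate point there being the passage between periodic and free boundary conditions on the torus.
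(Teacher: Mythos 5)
Your proposal is correct, but it takes a genuinely different route from the paper. The paper treats the bulk of the statement (pointwise convergence, convexity, differentiability of $f_\beta$, the Legendre relation \eqref{hompressure}) as classical, citing Ellis \cite{E}, and proves \emph{only} the uniform convergence of $f_{\Lambda_\eps,\beta}$ to $f_\beta$. For that it uses a monotonicity/Dini argument: from the doubling inequality \eqref{salmoncita} and the single–spin–flip continuity bound \eqref{salmoncito}, one derives $f_{\Lambda_{q+1},\beta}(u)\le f_{\Lambda_q,\beta}(u)+a_q$ with $\sum_q a_q<\infty$; setting $g_q:=f_{\Lambda_q,\beta}-\sum_{i<q}a_i$ makes the sequence $(g_q)$ monotone decreasing, and Dini's theorem then upgrades pointwise to uniform convergence on the compact $[-1,1]$. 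You instead build the full chain (pressure by subadditivity, free energy by the same device, duality, convexity, regularity via $f_\beta=p_\beta^*$ and the known properties of $\varphi$), and obtain uniform convergence via \emph{uniform equicontinuity}: on compacts $[a,b]\subset(-1,1)$ the counting factor $(|\Lambda_\eps|-n)/(n+1)$ in the spin–flip bound is $O(1)$, giving an $\eps$-uniform Lipschitz bound, while near $\pm1$ a droplet lower bound and a binomial upper bound give $\eps$-uniform smallness of $f_{\Lambda_\eps,\beta}(u)-f_{\Lambda_\eps,\beta}(\pm1)$. Both routes are valid; the Dini route is shorter when one is allowed to take the classical ingredients for granted, while yours is more self-contained and spells out the Legendre-duality and regularity bookkeeping that the paper merely cites. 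One small imprecision to flag: you phrase the endpoint estimates as statements about $\limsup$ and $\liminf$ of the \emph{limit} $f_\beta$, but to feed them into a uniform-convergence argument you need the underlying bounds (the entropy term $\tfrac1\beta H\bigl(\tfrac{1-u}{2}\bigr)$ from the binomial count, and the $|\Lambda_\eps|^{-1/d}$ surface term from the droplet) to be uniform in $\eps$ — which indeed they are, as your sketch implicitly gives, but this should be said explicitly.
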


\begin{proof}

This is a classical result (see e.g. \cite{E})
with the exception of the uniform convergence of the free energy, which is given here.
With a slight abuse of notation we use $\Lambda_{q} \defi \Lambda_{\eps}$ with $\eps=2^{-q}$.
Observe that $\Lambda_{q+1}$ is the disjoint union of the sub-domains $\Lambda_{q, 1}, \ldots, \Lambda_{q, 2^d}$, each of which is a translation of $\Lambda_{q}$.
For a configuration $\sigma\in\Omega_{\Lambda_{q+1}}$, we call $\sigma_{i}$,
$i=1,\ldots,2^d$ its projections over these sub-domains, i.e., $\sigma=\sigma_1\vee\ldots\vee\sigma_{2^d}$
where by $\vee$ we denote the concatenation on the sub-domains.
Let $u\in I_{\abs{\Lambda_{q}}}$.
Observe that if $m_{\Lambda_{q,i}}(\sigma_i)=u$ for all $i=1,\ldots,2^d$ then $m_{\Lambda_{q+1}}(\sigma)=u$.
Note also that
there are $O\pare{2^d |\partial\Lambda_q|}$ many edges connecting vertices of different sub-domains, where by 
$\partial\Lambda_q$ we denote the boundary of the set.
As a consequence, after defining
\begin{align}
\alpha_q \defi \sum_{\substack{\sigma\in\Omega_{\Lambda_q} \\ m_{\Lambda_q}(\sigma)=u}}
e^{-\beta H_{\Lambda_q}^{\nn}(\sigma)},
\end{align}
we neglect the contributions between the sub-domains, so for some $C>0$ we obtain:
\begin{align}
\alpha_{q+1}\geq
e^{-\beta C 2^d |\partial \Lambda_q|}
\sum_{\substack{ \sigma_{1}\in \Omega_{\Lambda_{q,1}} \\ m_{\Lambda_{q,1}}(\sigma_1)=u }} \ldots
\sum_{\substack{ \sigma_{2^d}\in \Omega_{\Lambda_{q,{2^d}}} \\ m_{\Lambda_{q,2^d}}(\sigma_{2^d})=u }}
\prod_{i=1}^{2^d}e^{-\beta H_{\Lambda_{q,i}}^{\nn}(\sigma_i)}
=e^{-\beta C 2^d |\partial \Lambda_q|} \alpha_q^{2^d}.
\end{align}
Taking logarithm and dividing by $-\beta \abs{\Lambda_{q+1}}$, we get
\begin{align}\label{salmoncita}
f_{\Lambda_{q+1},\beta}\pare{u}\leq f_{\Lambda_{q},\beta}\pare{u}+O\pare{ 2^{-q}}.
\end{align}
For a configuration $\sigma\in\Omega_{\Lambda_{q+1}}$, let $N^+\pare{\sigma} \defi \abs{\llav{x\in\Lambda_{q+1}:\sigma(x)=1}}$ be the associated number of pluses.
There is a correspondence between $I_{\abs{\Lambda_{q+1}}}$ and the set $\corch{0,\abs{\Lambda_{q+1}}}\cap{\mathbb Z}$ containing all possible number of pluses.
Let $u$ and $u'$ be consecutive elements of $I_{\abs{\Lambda_{q+1}}}$ such that $u<u'$, and let
$n$ and $n+1$ be respectively their associated number of pluses.
Then
\begin{align}\label{cabra}
\sum_{\substack{ \sigma\in\Omega_{\Lambda_{q+1}} \\[0.05cm] m\pare{\sigma}=u' }} e^{-\beta H_{\Lambda_{{q+1}}}^{\nn}\pare{\sigma}}=
\sum_{\substack{ \sigma\in\Omega_{\Lambda_{{q+1}}} \\[0.05cm] N^+\pare{\sigma}=n+1 }} e^{-\beta H_{\Lambda_{{q+1}}}^{\nn}\pare{\sigma}}
=\frac{1}{n+1}
\sum_{\substack{ \sigma\in\Omega_{\Lambda_{{q+1}}} \\[0.05cm] N^+\pare{\sigma}=n }}
\, \sum_{\substack{ \sigma'\in\Omega_{\Lambda_{{q+1}}} \\[0.05cm] \sigma'\geq \sigma
\\[0.05cm] N^+\pare{\sigma'}=n+1  }}
e^{-\beta H_{\Lambda_{{q+1}}}^{\nn}\pare{\sigma'}},
\end{align}
where $\sigma'\geq \sigma$ means $\sigma'(x)\geq\sigma(x)$ for every $x\in\Lambda_{q+1}$.
In the later sum, the configurations $\sigma'$ differ from $\sigma$ just in one site.
As every site has $2^d$ neighbours, we have
\begin{align}\label{jarra}
H_{\Lambda_{q+1}}^{\nn}\pare{\sigma'}\geq H_{\Lambda_{q+1}}^{\nn}\pare{\sigma}-2^{d+1}.
\end{align}
Replacing in \eqref{cabra}, using the fact that  $\abs{\llav{\sigma':\sigma'\geq \sigma,N^+\pare{\sigma'}=n+1}}=\abs{\Lambda_{q+1}}-n$ 
(i.e., the number of minuses in the $\sigma$ configuration)
and the bound $\frac{\abs{\Lambda_{q+1}}-n}{n+1}\leq \abs{\Lambda_{q+1}}$, we get
\begin{align}
\sum_{\substack{ \sigma\in\Omega_{\Lambda_{q+1}} \\[0.05cm] m\pare{\sigma}=u' }} e^{-\beta H_{\Lambda_{q+1}}^{\nn}\pare{\sigma}}
\leq
\abs{\Lambda_{q+1}}e^{\beta 2^{d+1}} \sum_{\substack{ \sigma\in\Omega_{\Lambda_{q+1}} \\[0.05cm] m\pare{\sigma}=u }} e^{-\beta H_{\Lambda_{q+1}}^{\nn}\pare{\sigma}}.
\end{align}
Taking logarithm and dividing by $-\beta\abs{\Lambda_{q+1}}$, we get
\begin{align}
f_{\Lambda_{q+1},\beta}\pare{u}-f_{\Lambda_{q+1},\beta}\pare{u'}\leq
O\pare{\frac{\log\abs{\Lambda_{q+1}}}{\abs{\Lambda_{q+1}}}}.  
\end{align}
For $f_{\Lambda_{q+1},\beta}\pare{u'}-f_{\Lambda_{q+1},\beta}\pare{u}$
the same bound can be obtained   by replacing the number of pluses $N^+$ by the number of minuses $N^-$.
Then
\begin{align}\label{salmoncito}
\abs{f_{\Lambda_{q+1},\beta}\pare{u}-f_{\Lambda_{q+1},\beta}\pare{u'}}=
O\pare{\frac{\log\abs{\Lambda_{q+1}}}{\abs{\Lambda_{q+1}}}}.
\end{align}

For $u\in I_{\abs{\Lambda_{q+1}}}$, let $u_{-}$ and $u_{+}$ be the best approximates 
in $I_{\abs{\Lambda_{q}}}$ of $u$ from below and from above:
\begin{align}
u_- \defi  \max\llav{m'\in I_{\abs{\Lambda_{q}}}:m'\leq u},
\ \ \ \ \ \ 
u_+ \defi  \min\llav{m'\in I_{\abs{\Lambda_{q}}}:m'\geq u}.
\end{align}
For $u\in I_{\abs{\Lambda_{q+1}}}\setminus I_{\abs{\Lambda_{q}}}$, using \eqref{salmoncito} repeatedly, we get
\begin{eqnarray}
f_{\Lambda_{q+1},\beta}\pare{u} & \leq &
f_{\Lambda_{q+1},\beta}\pare{u_-}\wedge f_{\Lambda_{q+1},\beta}\pare{u_+}+
O\pare{\frac{\log\abs{\Lambda_{q+1}}}{\abs{\Lambda_{q+1}}}}\nonumber\\
& \leq &
f_{\Lambda_{q},\beta}\pare{u}+
O\pare{\frac{\log\abs{\Lambda_{q+1}}}{\abs{\Lambda_{q+1}}}},
\end{eqnarray}
after using \eqref{salmoncita}
and the fact that $2^{-q}\ll \frac{\log\abs{\Lambda_{q+1}}}{\abs{\Lambda_{q+1}}}$.

Let $a_q \defi  O\pare{\frac{\log\abs{\Lambda_{q+1}}}{\abs{\Lambda_{q+1}}}}$ and observe that $a \defi \sum_qa_q$ is finite.
From the above estimates and the fact that $f_{\Lambda_q,\beta}$ is defined by linear interpolation, we obtain
\begin{align}\label{opus}
f_{\Lambda_{q+1},\beta}\pare{u}\leq f_{\Lambda_{q},\beta}\pare{u}+a_q,
\end{align}
for every $u\in\corch{-1,1}$ and every $q$.
Let $g_{q} \defi  f_{\Lambda_q,\beta}-\sum_{i=0}^{q-1} a_i$.
Inequality \eqref{opus} implies that
\begin{align}\label{aje}
g_{{q+1}}(u)\leq g_{q}(u),
\end{align}
for every $u\in \corch{-1,1}$.
The point-wise convergence of the free energy
guarantees the 
point-wise convergence of $\pare{g_{q}}_q$ to $f_\beta-a$.
Then $(g_{q})_q$ is a sequence of continuous functions defined on a compact set that converges point-wise and in a monotonic way to $f_\beta-a$.
Under these hypotheses, Dini's theorem asserts that the convergence is uniform,
hence concluding the uniform convergence of $f_{\Lambda_q,\beta}$ to $f_\beta$.
\end{proof}

\bigskip

\subsection{Proof of Lemma \ref{AA}}\label{add}

We consider identity \eqref{cabra} with $H_{\Lambda_q}^{\nn}$ replaced by $H_{\Lambda_{q},\gamma, \alpha}$.
While comparing $H_{\Lambda_{q},\gamma, \alpha}\pare{\sigma}$ with $H_{\Lambda_{q},\gamma, \alpha}\pare{\sigma'}$,
the nearest-neighbour part of the Hamiltonian can be treated as in the proof of Theorem \ref{tfree}.
To treat  the quadratic part, observe that, as every vertex interacts with $O\pare{\gamma^{-d}}$ vertices, we have
\begin{align}
H_{\Lambda_{q},\gamma, \alpha}\pare{\sigma'}\geq H_{\Lambda_{q},\gamma, \alpha}\pare{\sigma} + O\pare{\gamma^{-d}}.
\end{align}
We can now repeat the arguments of the proof of Theorem \ref{tfree}: using \eqref{salmoncito}
and \eqref{salmoncita} with error $O(2^{-q}\gamma^{-d})$,
for $t$ and $t'$ consecutive elements of $I_{|\Lambda_q|}$ we have that
\begin{align}
\abs{F_{\Lambda_q, \gamma,\alpha}(t)
-F_{\Lambda_q, \gamma,\alpha}(t')}
 \leq
 C 2^{-q}\gamma^{-d}
 +\frac{\log\abs{\Lambda_q}}{\abs{\Lambda_q}},
\end{align}
where $C$ is a constant that depends only on the dimension. \qed

\bigskip

\subsection{Proof of Lemma \ref{pedro}}\label{lem51}

Given $c,\zeta>0$, let $B_{\Lambda_\eps,\gamma,\zeta, >c}$ be the set of mostly bad spin configurations:
\begin{equation}
B_{\Lambda_\eps,\gamma,\zeta, >c} \defi  \Big\{ \sigma \in \Omega_{\Lambda_\eps}:\ |\{x \in \Lambda_\eps:\ |I_x^\gamma(\sigma)-u|>\zeta \}| >c |\Lambda_\eps| \Big\} ,
\end{equation}
where $I_x^\gamma$ is defined in \eqref{Igamma}.
Let $u\in\corch{-m_\beta,m_\beta}$.
To get an upper bound of
\begin{equation}\label{hola00}
\mu_{\Lambda_\eps,\gamma,\alpha}(B_{\Lambda_\eps,\gamma,\zeta, >c})=
\frac{1}{Z_{\Lambda_{\eps},\gamma,\alpha}}
\sum_{\sigma\in B_{\Lambda_\eps,\gamma,\zeta, >c}}e^{-\beta H_{\Lambda_{\eps},\gamma,\alpha}\pare{\sigma}} ,
\end{equation}
we look for a lower bound for the Kac part of the Hamiltonian.
Indeed, for $\sigma\in B_{\Lambda_\eps,\gamma,\zeta, >c}$ 
since $\alpha=u$ (from \eqref{EL}) we have that
\begin{align}\label{hola01}
\sum_{x\in\Lambda_\eps}\pare{I_x^{\gamma}\pare{\sigma}-u}^2\geq c\zeta^2\abs{\Lambda_\eps},
\end{align}
which further implies that
\begin{align}\label{hola11}
\sum_{\sigma\in B_{\Lambda_\eps,\gamma,\zeta, >c}}e^{-\beta H_{\Lambda_{\eps},\gamma,\alpha}\pare{\sigma}} 
\leq
e^{-\beta c \abs{\Lambda_\eps}\zeta^2}\sum_{\sigma\in \Omega_{\Lambda_\eps}}e^{-\beta H_{\Lambda_\eps}^{\nn}\pare{\sigma}}.
\end{align}
Furthermore, from Theorem \ref{tfree}  we have that there exists an error $s_1(\eps)\to 0$ as $\eps\to 0$
such that 
\begin{align}\label{hola22}
\sum_{\sigma\in \Omega_{\Lambda_\eps}}e^{-\beta H_{\Lambda_\eps}^{\nn}\pare{\sigma}}
=e^{-\beta\abs{\Lambda_\eps}
\corch{p_\beta(0)+s_1(\eps)}}.
\end{align}
On the other hand, to estimate the denominator of \eqref{hola00}, note that 
since $u\in [-m_\beta,m_\beta]$, we have that
$f'_\beta\pare{u}=0$. 
Then, \eqref{pressure} gives $P\pare{\alpha}=-f_\beta\pare{u}$ for $u=\alpha$; hence,
\begin{align}\label{hola44}
Z_{\Lambda_{\eps},\gamma,\alpha}=
e^{-\beta\abs{\Lambda_\eps}\corch{f_\beta\pare{u}+s_2\pare{\eps,\gamma}}},
\end{align}
for some error $s_2\pare{\eps,\gamma}$ vanishing as $\eps$ and $\gamma$ go to zero.
Thus, if we substitute \eqref{hola11}, \eqref{hola22} and \eqref{hola44} into \eqref{hola00}, 
we obtain that:
\begin{align}
\mu_{\Lambda_\eps,\gamma,\alpha}(B_{\Lambda_\eps,\gamma,\zeta, >c})
\leq
e^{-\beta\abs{\Lambda_\eps}\corch{c\zeta^2+s_1\pare{\eps}-s_2\pare{\eps,\gamma}}}
\leq
e^{-\beta\abs{\Lambda_\eps}c\zeta^2/2},
\end{align}
for $\eps$ and $\gamma$ small enough.

\medskip

If $u\notin\corch{-m_\beta,m_\beta}$ we have a similar strategy but for the appropriate external field.
Hence, adding and subtracting $u$ we expand the Hamiltonian as follows:
\begin{equation}\label{gotoh}
H_{\Lambda_{\eps},\gamma,\alpha}\pare{\sigma}=
H_{\Lambda_\eps}^{\nn}\pare{\sigma}+
\sum_{x\in\Lambda_\eps}\corch{I_x^\gamma\pare{\sigma}-u}^2+
2\pare{u-\alpha}\sum_{x\in\Lambda_\eps}{I_x^\gamma\pare{\sigma}}
-2u\pare{u-\alpha}\abs{\Lambda_\eps}+\pare{u-\alpha}^2\abs{\Lambda_\eps}.
\end{equation}
Note that for the computation of \eqref{hola00} the constant terms are irrelevant.
In the case $u\notin\corch{-m_\beta,m_\beta}$, from \eqref{EL} we have that 
\begin{align}
f'_\beta\pare{u}=-2\pare{u-\alpha};
\end{align}
hence
our goal is to approximate the Hamiltonian $H_{\Lambda_{\eps},\gamma,\alpha}$ by \eqref{guante} with external field $h \defi   -2\pare{u-\alpha}$. 
Additionally, recalling \eqref{Iandone} and \eqref{main_choice_ofL} we obtain that
\begin{align}
\sum_{y\in\Lambda_\eps}I_y^{\gamma}\pare{\sigma}=
\sum_{y\in\Lambda_\eps}\sum_{x\in\Lambda_\eps}J_{\gamma}\pare{x,y}\sigma(x)=
\sum_{x\in\Lambda_\eps}\sigma(x)\sum_{y\in\Lambda_\eps}
J_{\gamma}\pare{x,y}=
\sum_{x\in\Lambda_\eps}\sigma(x)+s\pare{\gamma}O\pare{1}\abs{\Lambda_\eps},
\end{align}
for some $s(\gamma)\to 0$ as $\gamma\to 0$.
Then, with $h$ defined above, the third term of \eqref{gotoh} gives $-h \sum_{x\in \Lambda_\eps}\sigma(x)$
with a vanishing error.
For the second term we use \eqref{hola01}.

For the denominator, we restrict to 
\begin{equation}
B_{\Lambda_\eps,\gamma,\zeta',\leq c'} \defi 
\Big\{ \sigma \in \Omega_{\Lambda_\eps}:\ |\{x \in \Lambda_\eps:\ |I_x^\gamma(\sigma)-u|>\zeta' \}| \leq c' |\Lambda_\eps| \Big\} 
\end{equation}
for $c', \zeta'>0$ to be chosen appropriately.
Then for $\sigma\in B_{\Lambda_\eps,\gamma,\zeta',\leq c'}$ we have that
\begin{align}
\sum_{x\in\Lambda_\eps}\pare{I_x^{\gamma}\pare{\sigma}-u}^2\leq
O\pare{1}c'\abs{\Lambda_\eps}+\zeta'^2\abs{\Lambda_\eps}.
\end{align}
Thus, replacing all above estimates in  \eqref{hola00}, with $c', \zeta'>0$ 
chosen such that $O\pare{1}s\pare{\gamma}+O\pare{1}c'+\zeta'^2$ is smaller than $c\zeta^{2}$
(also $\gamma$ small enough)
we obtain:
\begin{align}
\mu_{\Lambda_{\eps}, \beta, \gamma,\alpha}\pare{B_{\Lambda_\eps,\gamma,\zeta,>c}}\leq
\frac{e^{-\beta\abs{\Lambda_\eps}\pare{c\zeta^2+O\pare{1}s\pare{\gamma}}}}{
e^{-\beta\abs{\Lambda_\eps}\pare{O\pare{1} c'+\zeta'^2 }}
\mu^{\nn}_{\Lambda_\eps, h}
\pare{B_{\Lambda_\eps,\gamma,\zeta',\leq c'}}}
\leq
\frac{e^{-C\beta\abs{\Lambda_\eps}}}{\mu^{\nn}_{\Lambda_\eps, h}
\pare{B_{\Lambda_\eps,\gamma,\zeta',\leq c'}}},
\end{align}
for some $C>0$.
Thus, to conclude it suffices to prove that the denominator is close to $1$. This is the content of
the next lemma:

\medskip

\begin{lemma}\label{ser}
Let $u\in\corch{-1,1}\setminus\corch{-m_\beta,m_\beta}$ and let $h=f_\beta'(u)$ be the external field that corresponds to the homogeneous magnetization $u$.
Then, for the measure \eqref{Ising} we have that
\begin{align}
\lim_{\gamma\rightarrow 0}\lim_{\eps\rightarrow 0}\mu^{\nn}_{\Lambda_\eps, h}
(B_{\Lambda_\eps,\gamma,\zeta, >c})=0
\end{align}
for every $\zeta, c>0$.
\end{lemma}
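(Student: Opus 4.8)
The plan is to show that, under $\mu^{\nn}_{\Lambda_\eps,h}$, the Kac average $I^\gamma_x(\sigma)$ is close to $u$ in $L^2$ with a bound that is uniform in $x$ and vanishes once we let $\eps\to 0$ and then $\gamma\to 0$; two applications of Markov's inequality then give the claim. The key point is that $|u|>m_\beta$ forces $h=f'_\beta(u)$ to be nonzero, so $(\beta,h)$ lies in the uniqueness region of the nearest–neighbour Ising model. Consequently (all classical, see e.g.\ \cite{E}): there is a unique translation invariant infinite volume Gibbs measure $\mu^{\nn}_h$; it is the local weak limit of $\mu^{\nn}_{\Lambda_\eps,h}$ (with the periodic boundary conditions of \eqref{Hnn}); its magnetization is $\E_{\mu^{\nn}_h}[\sigma_0]=\varphi(h)=u$, the last equality being the Legendre duality $h=f'_\beta(u)$; and its truncated two–point function is absolutely summable, so that $\chi(h)\defi\sum_{z\in\Z^d}|\langle\sigma_0;\sigma_z\rangle_{\mu^{\nn}_h}|$ is finite.

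First I would use Markov's inequality twice, together with the torus translation invariance of $\mu^{\nn}_{\Lambda_\eps,h}$, to reduce to a single second moment:
\begin{align}
\mu^{\nn}_{\Lambda_\eps,h}(B_{\Lambda_\eps,\gamma,\zeta, >c})
&\le \frac{1}{c|\Lambda_\eps|}\sum_{x\in\Lambda_\eps}\mu^{\nn}_{\Lambda_\eps,h}\big(|I^\gamma_x(\sigma)-u|>\zeta\big)\nonumber\\
&\le \frac{1}{c\,\zeta^{2}}\,\E_{\mu^{\nn}_{\Lambda_\eps,h}}\big[(I^\gamma_0(\sigma)-u)^2\big].
\end{align}
For $\gamma$ fixed and $\eps$ small the support of $J_\gamma(0,\cdot)$ does not wrap around $\Lambda_\eps$, so $(I^\gamma_0-u)^2$ is then a fixed bounded local function; by the local weak convergence $\mu^{\nn}_{\Lambda_\eps,h}\to\mu^{\nn}_h$ its expectation converges, as $\eps\to 0$, to $\E_{\mu^{\nn}_h}[(I^\gamma_0-u)^2]$.

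It remains to check that $\E_{\mu^{\nn}_h}[(I^\gamma_0-u)^2]\to 0$ as $\gamma\to 0$. Writing $I^\gamma_0(\sigma)-u=\sum_y J_\gamma(0,y)(\sigma(y)-u)+s(\gamma)u$ with $s(\gamma)=\sum_y J_\gamma(0,y)-1\to 0$ (cf.\ \eqref{Iandone}), and using $\E_{\mu^{\nn}_h}[\sigma(y)]=u$ together with translation invariance,
\begin{align}
\E_{\mu^{\nn}_h}\big[(I^\gamma_0-u)^2\big]
&=\sum_{y,y'}J_\gamma(0,y)J_\gamma(0,y')\,\langle\sigma_y;\sigma_{y'}\rangle_{\mu^{\nn}_h}+s(\gamma)^2u^2\nonumber\\
&\le \|\phi\|_\infty\,\gamma^{d}\Big(\sum_{y}J_\gamma(0,y)\Big)\chi(h)+s(\gamma)^2u^2 ,
\end{align}
where I bounded one factor $J_\gamma(0,y')\le\|\phi\|_\infty\gamma^{d}$ and summed the two–point function over $y'$. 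Since $\sum_y J_\gamma(0,y)\le 2$ for small $\gamma$ and $\chi(h)<\infty$, the right–hand side is $O(\gamma^{d})+o(1)\to 0$, and combining the three displays yields $\lim_{\gamma\to 0}\lim_{\eps\to 0}\mu^{\nn}_{\Lambda_\eps,h}(B_{\Lambda_\eps,\gamma,\zeta, >c})=0$.

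The only genuine content is the uniqueness–region input for $h\neq 0$ (convergence $\mu^{\nn}_{\Lambda_\eps,h}\to\mu^{\nn}_h$, the identity $\E_{\mu^{\nn}_h}[\sigma_0]=u$, and summability of the truncated correlations); once these are granted the rest is a one–line second moment estimate. I therefore expect no serious obstacle, only the bookkeeping around the order of limits and the torus — ensuring that $J_\gamma$ does not wrap for $\eps$ small and that each $x$ contributes the same second moment. For a reader preferring a purely finite–volume argument, one may instead invoke the uniform–in–volume bound $\sup_\eps\sup_y\sum_{y'}|\langle\sigma_y;\sigma_{y'}\rangle_{\mu^{\nn}_{\Lambda_\eps,h}}|<\infty$, valid at $h\neq 0$, and skip the passage to infinite volume entirely.
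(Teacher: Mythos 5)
Your proof is correct, but it takes a genuinely different route from the paper's. The paper runs a \emph{first}-moment argument: from $\sigma\in B_{\Lambda_\eps,\gamma,\zeta,>c}$ one deduces $\frac{1}{|\Lambda_\eps|}\sum_x|I^\gamma_x-u|\geq c\zeta$, applies Markov once, uses translation invariance to reduce to $\E_{\mu^{\nn}_{\Lambda_\eps,h}}[|I^\gamma_0-u|]$, sends $\eps\to 0$, and finally invokes the multidimensional ergodic theorem (Georgii, Theorem~14.A8) to conclude $\E_{\mu^{\nn}_h}[|I^\gamma_0-u|]\to 0$ as $\gamma\to 0$, using only that $\mu^{\nn}_h$ is ergodic with magnetization $u$. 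You instead run a \emph{second}-moment argument: two applications of Markov/Chebyshev reduce everything to $\E_{\mu^{\nn}_h}[(I^\gamma_0-u)^2]$, which you bound by $O(\gamma^d)\,\chi(h)+s(\gamma)^2u^2$ using summability of the truncated two-point function. The crucial input is different in the two cases. The paper needs ergodicity of $\mu^{\nn}_h$, which holds automatically for any extremal translation-invariant Gibbs measure and in particular for the unique measure at $h\neq 0$; you need finite susceptibility $\chi(h)<\infty$, which at $h\neq 0$ is also classical (e.g.\ via Lee--Yang analyticity or exponential decay of correlations off the coexistence line) but is a quantitatively stronger hypothesis. What your approach buys is an explicit rate of decay $O(\gamma^d)$, and the alternative you sketch at the end — a uniform-in-volume susceptibility bound — would let you avoid the infinite-volume passage entirely; what the paper's approach buys is that it only uses the soft ergodic-theorem input and requires no control on correlations beyond that. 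The remaining bookkeeping (torus translation invariance, no wrap-around of $J_\gamma(0,\cdot)$ for $\eps$ small at fixed $\gamma$, local weak convergence for the bounded local observable) is handled the same way in both arguments.
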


\medskip

\begin{proof}[Proof of lemma \ref{ser}.]
For $\sigma\in B_{\Lambda_\eps,\gamma,\zeta, >c}$, we have
\begin{align}
\frac{1}{\abs{\Lambda_\eps}}\sum_{x\in\Lambda_\eps}\abs{I_x^{\gamma}\pare{\sigma}-u}\geq c\zeta,
\end{align}
which implies that
\begin{align}\label{sabor}
\mathbb{E}_{\mu^{\nn}_{\Lambda_\eps,h}}\left[ \frac{1}{\abs{\Lambda_\eps}} \sum_{x \in \Lambda_\eps} \abs{I_x^{\gamma}-u}  \right]
\geq c\zeta \, \mu^{\nn}_{\Lambda_\eps,h}\pare{B_{\Lambda_\eps,\gamma,\zeta, >c}}.
\end{align}
By translation invariance of the measure $\mu^{\nn}_{\Lambda_\eps,h}$ 
with periodic boundary conditions, the left-hand side of \eqref{sabor} coincides with
$\mathbb{E}_{\mu^{\nn}_{\Lambda_\eps,h}}\left[ \abs{I_0^{\gamma}-u}  \right]$. 
Since the random variable $\abs{I_0^{\gamma}-u}$ depends on a finite number of coordinates, 
the later expectation converges to 
$ {\mathbb E}_{\mu^{\nn}_{h}} \left[ \abs{I_0^{\gamma}-u}\right]$ as $\eps\to 0$.
Note that $\mu^{\nn}_h$ is the infinite volume limit of \eqref{Ising}.	
Then, by applying the multidimensional ergodic theorem (e.g. Theorem 14.A8 of \cite{Geo11}),
we obtain that
\begin{align}
\lim_{\gamma\rightarrow 0}\,
{\mathbb E}_{\mu^{\nn}_{h}} \left[ \abs{I_0^{\gamma}-u}\right]=0,
\end{align}
since $\E_{\mu^{\nn}_{h}}[\sigma(x)]=u$ for all $x\in B_{\gamma^{-1}}(0)$ and $\gamma^{-1}\to\infty$.
\end{proof}

\bigskip

\section{Estimates on ``bad boxes".}\label{est_baC_box}\label{appB}

Before proceeding with the estimates on ``bad boxes'',
we state a theorem for the infinite volume Gibbs measures for the Ising model:

\begin{theorem}\label{factsgibbs}
For $d\geq 2$, $h=0$ and $\beta>\beta_c(d)$ ($\beta_c(d)$ is the critical value of the inverse temperature in dimension $d$), there are two different probability measures $\mu^{\nn}_{0,\pm}$
on $\{-1,1\}^{\mathbb Z^d}$ such that,
for any sequence of increasing volumes $(\Lambda_n)_n$, 
the sequence 
$\mu^{\nn}_{\Lambda_\eps, 0,\pm}$ (with $\pm$ boundary conditions) converges
weakly to $\mu^{\nn}_{0,\pm}$.
More precisely, for $\Delta\subset \Lambda$ finite subsets of $\mathbb Z^d$ and $f:\{-1,1\}^{\mathbb Z^d}\rightarrow\mathbb R$ a function that depends only on spins inside $\Delta$, there exists a positive constant $C$ such that 
\begin{equation}\label{prel1}
\Big| \E_{\mu^{\nn}_{\Lambda, 0,+}}[f]-\E_{\mu^{\nn}_{0,+}}[f]\Big|\leq
C |\Lambda| e^{-\beta \text{dist}(\Delta,\Lambda^c)}.
\end{equation}
Furthermore, exponential decay of correlations holds: if the functions $f$ and $g$ depend on spins inside the finite regions $\Delta_f$ and $\Delta_g$, respectively,
then there exist positive constants $C_1$ and $C_2$ such that
\begin{equation}\label{prel2}
\Big| \E_{\mu^{\nn}_{0,+}}[f g]-\E_{\mu^{\nn}_{0,+}}[f]\E_{\mu^{\nn}_{0,+}}[g]\Big|\leq
C_1 e^{-C_2 \text{dist}(\Delta_f,\,\Delta_g)}.
\end{equation}
\end{theorem}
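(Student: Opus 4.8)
This is a classical result; I would only recall the ingredients and refer to standard texts (for instance \cite{E} and \cite{Geo11}), since none of it is new.

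\emph{Existence of $\mu^{\nn}_{0,\pm}$ and weak convergence.} The first step is the GKS and FKG inequalities together with monotonicity of the finite-volume state in the boundary condition: for an increasing local function $f$, the map $\Lambda\mapsto\E_{\mu^{\nn}_{\Lambda,0,+}}[f]$ is nonincreasing along increasing volumes, and $\Lambda\mapsto\E_{\mu^{\nn}_{\Lambda,0,-}}[f]$ is nondecreasing; hence both converge. Writing an arbitrary local function as a difference of increasing ones, all local expectations converge, and the limit is independent of the exhausting sequence (sandwich two cofinal sequences). The limiting local expectations are consistent, so by Kolmogorov extension they define probability measures $\mu^{\nn}_{0,\pm}$ on $\{-1,1\}^{\Z^d}$, and the DLR equations pass to the limit. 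That $\mu^{\nn}_{0,+}\neq\mu^{\nn}_{0,-}$ for $\beta>\beta_c(d)$ is exactly the Peierls bound: the probability of a $-$-contour encircling a given site is summably small uniformly in the volume, so $\E_{\mu^{\nn}_{0,+}}[\sigma_0]=m_\beta>0$.

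\emph{Proof of \eqref{prel1}.} Here I would use the DLR equations to write $\E_{\mu^{\nn}_{0,+}}[f]=\int\E_{\mu^{\nn}_{\Lambda,0,\eta}}[f]\,\mu^{\nn}_{0,+}(d\eta)$, so that $\abs{\E_{\mu^{\nn}_{\Lambda,0,+}}[f]-\E_{\mu^{\nn}_{0,+}}[f]}\le\sup_{\eta}\abs{\E_{\mu^{\nn}_{\Lambda,0,+}}[f]-\E_{\mu^{\nn}_{\Lambda,0,\eta}}[f]}$. On the event that there is a circuit (in $d=2$; a contour in general $d$) of $+$-spins inside $\Lambda$ surrounding $\Delta$, conditioning on the outermost such circuit makes the conditional law of the enclosed spins equal to the $+$-state in the enclosed region, independently of the boundary condition outside $\Lambda$. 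Hence the discrepancy is at most $2\norm{f}_\infty$ times the probability, uniform over boundary conditions, that no such $+$-circuit exists, i.e. that a $\ast$-connected $-$-cluster joins $\partial\Delta$ to $\Lambda^c$. A crude Peierls count --- dual paths/surfaces of length at least $\mathrm{dist}(\Delta,\Lambda^c)$, with a factor $\abs{\Lambda}$ for the location of the cluster --- bounds this by $C\abs{\Lambda}e^{-\beta\,\mathrm{dist}(\Delta,\Lambda^c)}$ in the Peierls regime, the constant $C$ absorbing $\norm{f}_\infty$ and the combinatorial growth rate.

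\emph{Proof of \eqref{prel2}.} For $\beta$ large this follows from the low-temperature contour expansion of the $+$-phase: the polymer activities are exponentially small, and the Koteck\'y--Preiss cluster-expansion estimates yield exponential clustering of truncated correlations. For the full range $\beta>\beta_c(d)$ one passes to the random-cluster (Edwards--Sokal) representation, in which $\E_{\mu^{\nn}_{0,+}}[fg]-\E_{\mu^{\nn}_{0,+}}[f]\E_{\mu^{\nn}_{0,+}}[g]$ is controlled by the probability that $\Delta_f$ and $\Delta_g$ are connected to one another but not to infinity in the wired FK measure, which decays exponentially by sharpness of the phase transition for FK percolation. This last input, valid for every $\beta>\beta_c(d)$, is the only genuinely nonelementary ingredient, and it is the step I expect to be the main obstacle; for the present paper it may simply be quoted from the literature, since the Young--Gibbs application uses only $\beta>\log\sqrt5$, well inside the range where the elementary cluster-expansion argument applies.
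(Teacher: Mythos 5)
The paper does not actually prove Theorem~\ref{factsgibbs}; it cites \cite{P15} (Theorem~2.5, Section~2.6.2, and Theorem~2.18), so strictly speaking there is no ``paper's proof'' to compare against, and you are right to treat this as a standard result to be quoted.

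Your sketches of the existence/weak-convergence part and of \eqref{prel2} are in line with the standard arguments. However, your sketch of \eqref{prel1} has a real gap. After invoking the DLR identity
$\E_{\mu^{\nn}_{0,+}}[f]=\int\E_{\mu^{\nn}_{\Lambda,0,\eta}}[f]\,\mu^{\nn}_{0,+}(d\eta)$
you pass immediately to
$\abs{\E_{\mu^{\nn}_{\Lambda,0,+}}[f]-\E_{\mu^{\nn}_{0,+}}[f]}\le\sup_{\eta}\abs{\E_{\mu^{\nn}_{\Lambda,0,+}}[f]-\E_{\mu^{\nn}_{\Lambda,0,\eta}}[f]}$
and then try to show that the probability of ``no $+$-circuit around $\Delta$ inside $\Lambda$'' is small \emph{uniformly over the boundary condition $\eta$}. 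That uniform claim is false: for $\eta=-$ (or any boundary condition that is predominantly $-$ near $\partial\Lambda$), the measure $\mu^{\nn}_{\Lambda,0,\eta}$ is essentially the finite-volume $-$-phase in a deep $\beta$ regime, and the probability of \emph{no} $+$-circuit around $\Delta$ is close to $1$, so the Peierls count does not help and the supremum is of order one. The exponential bound in \eqref{prel1} is not a statement about adversarial boundary conditions; it exploits that the outer configuration is sampled from $\mu^{\nn}_{0,+}$. The fix is to keep the integral over $\eta\sim\mu^{\nn}_{0,+}$ (so that $\eta$ is typically $+$-dominated near $\partial\Lambda$, which forces a $+$-circuit with high probability), or, more cleanly, to compare $\mu^{\nn}_{\Lambda,0,+}$ with $\mu^{\nn}_{\Lambda',0,+}$ for $\Lambda\subset\Lambda'$ using the Markov property on the outermost $+$-circuit and a Peierls estimate for its absence under the $+$-measure, and then let $\Lambda'\uparrow\Z^d$. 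Since both you and the paper ultimately defer to the literature, this does not invalidate the intended use of the theorem, but the $\sup_\eta$ reduction should be removed from any written version of the argument.
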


The proof is standard and can be found in \cite{P15}, Theorem 2.5 and its proof in Section 2.6.2. See also Theorem 2.18.

\subsection{Proof of Lemma \ref{lem1}.}\label{pfoflem1}
In this section we give the following proof:

\bigskip

{\it Proof of Lemma \ref{lem1}:}
Let $\Delta_K^{00}$ be the cube with the same center as $\Delta_K^0$ and at distance $K^{\frac 12}$
from its complement.
Given a set $S\subset \Lambda_\eps$, an accuracy parameter $\zeta$ and a radius $R>0$ we define the following set of configurations:
\begin{equation}\label{D}
C_{S,\zeta} \defi \Big\{
\sigma:
\Big|
\sum_{x\in S}g(m_{B_R(x)})
-
\sum_{x\in S}
\E_{\mu^{\nn}_{0,\tau}}[g(m_{B_R(x)})]
\Big|
\geq \zeta |S|
\Big\}.
\end{equation}
We have that for any $\tau\in\{+,-\}$ and for $L$ large enough
\begin{equation}\label{subset}
C_{\Delta_K^0,\zeta}\subset C_{\Delta_K^{00},\frac{\zeta}{2}}.
\end{equation}
Given $i\in I$ and $C\in\mathcal K_i$ the sets $G_{C,i}$ and $\mathcal X'_{I\setminus i}$ are $C^c$ measurable
while the set $C_{\Delta_{K,i}^0,\zeta}$ is $C$ measurable. Hence, using \eqref{subset} we obtain
\begin{eqnarray}\label{split2}
\mu^{\nn}_{\Delta_L, 0,+}(\mathcal X'_I)
&=&
\sum_{C\in\mathcal K_i}
\mu^{\nn}_{\Delta_L, 0,+}(G_{C,i}\cap C_{\Delta_{K,i}^0,\zeta}\cap\mathcal X'_{I\setminus i})\nonumber\\
&\leq&
\sum_{C\in\mathcal K_i}
\mu^{\nn}_{C, 0,+}(C_{\Delta_{K,i}^{00},\frac{\zeta}{2}}) 
\mu^{\nn}_{C^c, 0,+}(G_{C,i}\cap \mathcal X'_{I\setminus i}),
\end{eqnarray}
where we have used the restricted measures $\mu^{\nn}_{C, 0,+}$ and 
$\mu^{\nn}_{C^c, 0,+}$ instead
of $\mu^{\nn}_{\Delta_L, 0,+}$.
From the exponential decay of correlations \eqref{prel2}, we have that there are two positive constants $C_1$ and $C_2$ such that
\begin{equation}\label{correlations}
\E_{\mu^{\nn}_{C, 0,\tau}}
\Big[
g(m_{B_R(x)})g(m_{B_R(y)})-
\E_{\mu^{\nn}_{0,\tau}}[g(m_{B_R(x)})] \E_{\mu^{\nn}_{0,\tau}}[g(m_{B_R(y)})]
\Big]
\leq
C_1
e^{- C_2 |x-y|}.
\end{equation}
Then, using the Chebyshev inequality and the weak convergence to an infinite volume limit \eqref{prel1}
we obtain:
\begin{eqnarray}\label{Cheb}
\mu^{\nn}_{C, 0,+}(C_{\Delta_{K,i}^{00},\frac{\zeta}{2}}) 
& \leq &
\Big(\frac{1}{ \zeta |\Delta_{K,i}^{00}|}\Big)^2
\sum_{x,y\in \Delta_{K,i}^{00}}
\E_{\mu^{\nn}_{C, 0,+}}
\Big[
\prod_{z=x,y}
\Big(
g(m_{B_R(z)})
-
\E_{\mu^{\nn}_{0,+}}[g(m_{B_R(z)})]
\Big)
\Big]
\nonumber\\
&\leq &
c \zeta^{-2} K^{-d},
\end{eqnarray}
for some $c>0$ and
where $R$ is such that $R\ll K^{1/2}$.
Then from \eqref{split2} we obtain:
\begin{equation}\label{final}
\mu^{\nn}_{\Delta_L, 0,+}(\mathcal X'_I)
\leq c \zeta^{-2} K^{-d}
\sum_{C\in\mathcal K_i}
\mu^{\nn}_{C^c,0,+}(G_{C,i}\cap \mathcal X'_{I\setminus i}),
\end{equation}
which gives the right hand side of \eqref{chi'} by using the fact that the events $G_{C,i}$ 
for $C\in\mathcal K_i$ are disjoint.
\qed

\medskip

\subsection{Proof of Lemma \ref{lem2}}\label{pfoflem2}

For completeness of the presentation, we first give a short description of the method
and then proceed with the proof of the relevant Lemma \ref{lem2}.
We restrict ourselves to dimension $2$, but we expect that
a similar result should be also true in higher dimensions.
As before, we divide $\Delta_L$ into boxes $\Delta_{K,i}$, and call $N_{K,L}=\frac{L^2}{K^2}$.
Recall that $\Delta_{K,i}^0$
stands for the box with the same center as $\Delta_{K,i}$ and distance $\sqrt K$
from its complement $C^c_{K}$.

Let $E(\Delta_L)$ be the set of edges connecting vertices in $\Delta_L$:
$E(\Delta_L)\defi\llav{\llav{x,y}\subset \Lambda_L:\abs{x-y}=1}$.
The random-cluster probability for $\om\in \llav{0,1}^{E(\Delta_L)}$ is defined by
\begin{align}\label{phi}
\phi(\om) \defi \frac{1}{Z'}\Big\{\prod_{\pic{xy}\in E}p^{\om_{\pic{xy}}}(1-p)^{1-\om_{\pic{xy}}}\Big\}2^{\textnormal{Cl}(\om)},
\end{align}
where $p \defi 1-e^{-2\beta}$, $\textnormal{Cl}(\om)$ is the number of connected components (or clusters) associated to $\om$, and $Z'$ is the normalizing constant.
The Edwards-Sokal probability $Q$, see \cite{ES}, 
is defined on the product space $\llav{0,1}^{E\pare{\Delta_L}}\times\llav{-1,1}^{\Delta_L}$ and has the random-cluster probability $\phi$ as the first marginal and the Ising probability $\mu_{\Delta_L,0,\empty}$ as the second marginal.
The main property of $Q$ is that the conditional probability $Q(\cdot|\om)$ is given by sampling a value of a spin independently in each cluster of $\om$ with probability $\frac{1}{2}$.
In this way, if $x,y\in \Delta_L$ and $\om\in\llav{0,1}^{E(\Delta_L)}$ are 
such that $x$ and $y$ are connected by a path of edges $e_1,\ldots,e_k$ such that $\om_{e_i}=1$ for every $i$, then
$Q(\sigma(x)=\sigma(y)|\om)=1$.

We can define a partial order on the probability space $\llav{0,1}^{E(\Delta_L)}$ by $\om\preccurlyeq\om'$ if and only if $\om_e\leq \om'_e$ for every $e\in E(\Delta_L)$.
A function $f:\llav{0,1}^{E(\Delta_L)}\rightarrow {\mathbb R}$ is increasing (resp. decreasing) if and only if $f(\om)\leq f(\om')$ (resp. $f(\om)\geq f(\om')$) for every $\om,\om'$ such that $\om\preccurlyeq\om'$;
an event $A\subset \llav{0,1}^{E(\Delta_L)}$ is increasing (resp. decreasing) if the indicator function $\textbf{1}_A$ is an increasing (resp. decreasing) function.
For probabilities $P$ and $P'$ on $\llav{0,1}^{E(\Delta_L)}$, we say that $P$ is stochastically dominated by $P'$, and write $P\leq_{\textnormal{st}}P'$, if and only if $\int f dP\leq \int f dP'$ for every increasing function $f$.
The later property holds if and only if $\int f dP\geq \int f dP'$ for every decreasing function $f$.
Let $B_\rho$ be the Bernoulli probability on $\llav{0,1}^{E(\Delta_L)}$ with parameter  $\rho \defi \frac{1-e^{-2\beta}}{1+e^{-2\beta}}$:
\begin{align}\label{bern}
B_\rho(\om) \defi \prod_{e\in E}\rho^{\om_{e}}(1-\rho)^{1-\om_{e}}.
\end{align}
The random-cluster probability satisfies $B_\rho\leq_{st} \phi$;
in particular, $\phi(A)\leq B_\rho(A)$ for every decreasing event $A$.
We are ready now to give the proof of Lemma \ref{lem2}.

\bigskip
{\it Proof of Lemma \ref{lem2}:} We need to introduce some terminology.
Let ${\mathbb Z}^{2*} \defi {\mathbb Z}^2+\pare{\frac{1}{2},\frac{1}{2}}$ be the dual set of vertices of ${\mathbb Z}^2$.
For an edge $e=\pic{xy}\in E({\mathbb Z}^2)$, where $E(\mathbb Z^2)$ is the set of edges of  $\mathbb Z^2$, 
we define its dual edge $e^*$ as the one obtained after rotating it $90$ degrees around its middle point;
for an edge subset $A\subset E({\mathbb Z}^2)$, we define $A^* \defi \llav{e^*:e\in A}$.
For any subset of edges $E$, let the support of $E$ be the set of vertices that are extreme vertices of any of the edges in $E$. 
For a subset $R\subset \Delta_L$,
we define
its dual set of vertices $R^*$ as the support of $E(R)^*$.
The inner boundary of $R^*$ is defined by $\partial^{\circ}R^*\llav{x\in R^*:\abs{x-y}=1\textnormal{ for some }y\in{\mathbb Z}^{2*}\setminus R^*}$.
For a configuration $\om\in\llav{0,1}^{E(R)}$, we define its dual configuration $\om^*\in\llav{0,1}^{E(R)^*}$ by $\om^*_{e^*}=1-\om_{e}$.
We say that an edge $e^*\in E(R)^*$ is {\bf $\om^*$-open} if $\om^*_{e^*}=1$.
Associated to $\om^*$, and for a fixed box $\Delta_{K,i}$, we call $J_{\Delta_{K,i}}(\om^*)\subset E(\Delta_{K,i})^*$ 
the set of edges ``penetrating from the outside of $\Delta_{K,i}$", 
i.e., those containing the dual edges that are $\om^*$-open and are connected to $\partial^\circ (\Delta_{K,i}^*)$ by a path of $\om^*$-open edges. 
We say that $\om\in\llav{0,1}^{E(\Delta_L)}$ has a circuit of open edges in $\Delta_{K,i}$ if  $J_{\Delta_{K,i}}(\om^*)\cap E(\Delta_{K,i}^0)^*=\emptyset$ (this is the formal way of saying that $\omega$ has a self-avoiding path of open edges living in $E(\Delta_{K,i})$ that surrounds $\Delta_{K,i}^0$).

Consider the random-cluster probability $\phi$ associated to $\mu_{\Delta_L,0,\empty}$ (defined in \eqref{phi}) and  the corresponding
Edwards-Sokal coupling  $Q$ between $\phi$ and $\mu_{\Delta_L,0,\empty}$.
The fundamental property of $Q$ implies that, for every $\Delta_{K,i}$,
\begin{align}
&Q \Big(\{  (\om,\sigma)\in\llav{0,1}^{E(\Delta_L)}\times \llav{-1,1}^{\Delta_L}:
\\[0.3cm]
& \hspace{1cm} 
\om\textnormal{ has a circuit of open edges in }\Delta_{K,i}, \Delta_{K,i}\textnormal{ is bad of type II} \}\Big)=0.
\end{align}
As a consequence, if we define
 $\mathcal Y''_I$ to be the set of configurations $\om\in\llav{0,1}^{E(\Delta_L)}$ that do not have a circuit of open edges for every $i\in I$,
we have
$\mu_{\Delta_L,0,\emptyset}(\cup_{I:\, |I|\geq \delta N_{K,L} }\mathcal X''_{I})\leq \phi(\cup_{I:\, |I|\geq \delta N_{K,L} }\mathcal Y''_{I})
$;
to conclude, we need to control this last term. 
Recall the Bernoulli probability $B_\rho$ on $\llav{0,1}^{E(\Delta_L)}$, given in \eqref{bern}.
As $\cup_{I:\, |I|\geq \delta  N_{K,L} }\mathcal Y''_{I}$ is a decreasing event, the stochastic domination $B_\rho \leq_{st}\phi$ implies that
$\phi(\cup_{I:\, |I|\geq \delta  N_{K,L} }\mathcal Y''_{I})\leq B_\rho(\cup_{I:\, |I|\geq\delta  N_{K,L} }\mathcal Y''_{I})$.
Observe that inequality
\begin{eqnarray}\label{rr44}
B_{\rho}(\cup_{I:\, |I|\geq \delta N_{K,L} }\mathcal Y''_{I}) \leq
\binom{N_{K,L}}{\lceil \delta N_{K,L}\rceil}
\left(B_\rho(\{\om\in \llav{0,1}^{E(\Delta_K)}:\om\textnormal{ does not have a circuit}\})\right)^{\delta N_{K,L}} \hspace{0.3cm}
\end{eqnarray}
holds, where $\Delta_K$ is any of the boxes $\Delta_{K,1},\ldots,\Delta_{K,N_{K,L}}$.
Moreover, by Stirling's formula, there is a constant $C_1=C_1(\delta)>0$ such that $\binom{N_{K,L}}{\lceil \delta N_{K,L}\rceil}\leq C_1^{N_{K,L}}$
for every $N_{K,L}$.
To estimate 
\begin{align}
B_\rho(\{\om\in \llav{0,1}^{E(\Delta_{K})}:\om\textnormal{ does not have a circuit}\}),
\end{align}
we consider its complement.
Let $R_1$, $R_2$, $R_3$ and $R_4$ be the rectangles of dimension $K\times\frac{K-K^{\frac 12}}{2}$ or $\frac{K-K^{\frac 12}}{2}\times K$ that satisfy $\cup_{i=1}^4 R_i=\Delta_K\setminus \Delta_K^0$ (see Figure \ref{figure1}).
\begin{figure}
\centering
\begin{tikzpicture}[line cap=round,line join=round,>=triangle 45,x=1cm,y=1cm]
\path [fill=gray!30!] (-1,-1.6) -- (-.3,-1.6) -- (-.3,1.6) -- (-1,1.6);
\path [fill=gray!30!] (-7.3,.9) -- (-4.1,.9) -- (-4.1,1.6) -- (-7.3,1.6);
\path [fill=gray!30!] (4.1,-1.6) -- (4.8,-1.6) -- (4.8,1.6) -- (4.1,1.6);
\path [fill=gray!30!] (.3,-.9) -- (3.5,-.9) -- (3.5,-1.6) -- (.3,-1.6);
\draw [line width=1.2pt] (-7.3,-1.6) -- (-4.1,-1.6) -- (-4.1,1.6) -- (-7.3,1.6) -- (-7.3,-1.6);
\draw [line width=1.2pt] (-3.5,-1.6) -- (-.3,-1.6) -- (-.3,1.6) -- (-3.5,1.6) -- (-3.5,-1.6);
\draw [line width=1.2pt] (7.3,-1.6) -- (4.1,-1.6) -- (4.1,1.6) -- (7.3,1.6) -- (7.3,-1.6);
\draw [line width=1.2pt] (3.5,-1.6) -- (.3,-1.6) -- (.3,1.6) -- (3.5,1.6) -- (3.5,-1.6);
\draw [line width=1.2pt] (-7.3,.9) -- (-4.1,.9);
\draw [line width=1.2pt] (-1,1.6) -- (-1,-1.6);
\draw [line width=1.2pt] (.3,-.9) -- (3.5,-.9);
\draw [line width=1.2pt] (4.8,-1.6) -- (4.8,1.6);
\draw (-6.1,1.55) node[anchor=north west] {$R_1$};
\draw (-1.05,0.35) node[anchor=north west] {$R_2$};
\draw (1.5,-.95) node[anchor=north west] {$R_3$};
\draw (4.1,.35) node[anchor=north west] {$R_4$};
\end{tikzpicture}
\caption{The rectangles $R_1$, $R_2$, $R_3$ and $R_4$.} \label{figure1}
\end{figure}
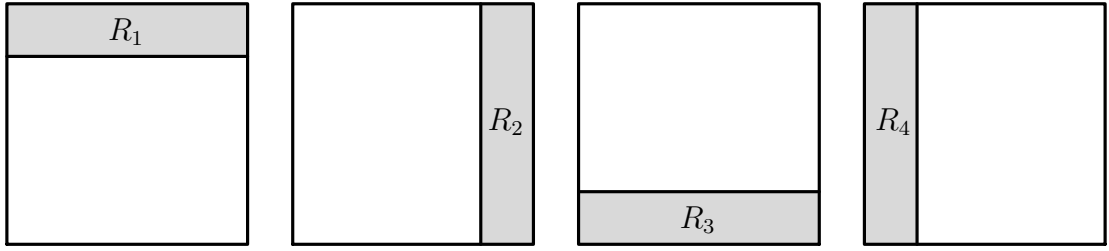

Let $R$ be one of these rectangles and, without loss of generality, suppose it to be horizontal, that is of dimension $K\times\frac{K-K^{\frac{1}{2}}}{2}$.
Let $T(R^*)$ and $B(R^*)$ be the corresponding vertices in the top and in the bottom of the support of the (dual) set of edges $R^*$, that is, the ones with highest and lowest second coordinate (see Figure \ref{figure2}).
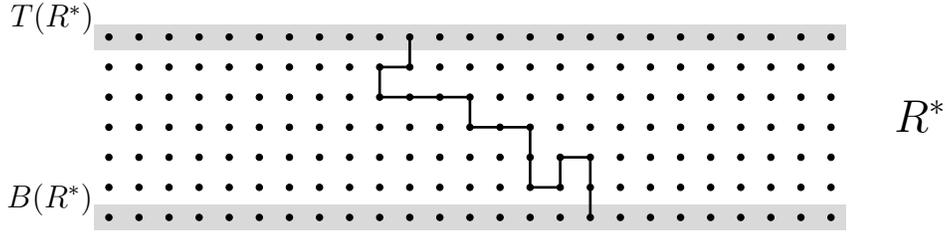
\begin{figure}
\centering
\begin{tikzpicture}[line cap=round,line join=round,>=triangle 45,x=1cm,y=1cm]
\path [fill=gray!30!] (-5,1.37) -- (5,1.37) -- (5,1.03) -- (-5,1.03);
\path [fill=gray!30!] (-5,-1.37) -- (5,-1.37) -- (5,-1.03) -- (-5,-1.03);
\draw[fill]   (0,0) circle (1.2pt);
\draw[fill]   (0.4,0) circle (1.2pt);
\draw[fill]   (0.8,0) circle (1.2pt);
\draw[fill]   (1.2,0) circle (1.2pt);
\draw[fill]   (1.6,0) circle (1.2pt);
\draw[fill]   (2,0) circle (1.2pt);
\draw[fill]   (2.4,0) circle (1.2pt);
\draw[fill]   (2.8,0) circle (1.2pt);
\draw[fill]   (3.2,0) circle (1.2pt);
\draw[fill]   (3.6,0) circle (1.2pt);
\draw[fill]   (4,0) circle (1.2pt);
\draw[fill]   (4.4,0) circle (1.2pt);
\draw[fill]   (4.8,0) circle (1.2pt);
\draw[fill]   (-0.4,0) circle (1.2pt);
\draw[fill]   (-0.8,0) circle (1.2pt);
\draw[fill]   (-1.2,0) circle (1.2pt);
\draw[fill]   (-1.6,0) circle (1.2pt);
\draw[fill]   (-2,0) circle (1.2pt);
\draw[fill]   (-2.4,0) circle (1.2pt);
\draw[fill]   (-2.8,0) circle (1.2pt);
\draw[fill]   (-3.2,0) circle (1.2pt);
\draw[fill]   (-3.6,0) circle (1.2pt);
\draw[fill]   (-4,0) circle (1.2pt);
\draw[fill]   (-4.4,0) circle (1.2pt);
\draw[fill]   (-4.8,0) circle (1.2pt);
\draw[fill]   (0,0.4) circle (1.2pt);
\draw[fill]   (0.4,0.4) circle (1.2pt);
\draw[fill]   (0.8,0.4) circle (1.2pt);
\draw[fill]   (1.2,0.4) circle (1.2pt);
\draw[fill]   (1.6,0.4) circle (1.2pt);
\draw[fill]   (2,0.4) circle (1.2pt);
\draw[fill]   (2.4,0.4) circle (1.2pt);
\draw[fill]   (2.8,0.4) circle (1.2pt);
\draw[fill]   (3.2,0.4) circle (1.2pt);
\draw[fill]   (3.6,0.4) circle (1.2pt);
\draw[fill]   (4,0.4) circle (1.2pt);
\draw[fill]   (4.4,0.4) circle (1.2pt);
\draw[fill]   (4.8,0.4) circle (1.2pt);
\draw[fill]   (-0.4,0.4) circle (1.2pt);
\draw[fill]   (-0.8,0.4) circle (1.2pt);
\draw[fill]   (-1.2,0.4) circle (1.2pt);
\draw[fill]   (-1.6,0.4) circle (1.2pt);
\draw[fill]   (-2,0.4) circle (1.2pt);
\draw[fill]   (-2.4,0.4) circle (1.2pt);
\draw[fill]   (-2.8,0.4) circle (1.2pt);
\draw[fill]   (-3.2,0.4) circle (1.2pt);
\draw[fill]   (-3.6,0.4) circle (1.2pt);
\draw[fill]   (-4,0.4) circle (1.2pt);
\draw[fill]   (-4.4,0.4) circle (1.2pt);
\draw[fill]   (-4.8,0.4) circle (1.2pt);
\draw[fill]   (0,-0.4) circle (1.2pt);
\draw[fill]   (0.4,-0.4) circle (1.2pt);
\draw[fill]   (0.8,-0.4) circle (1.2pt);
\draw[fill]   (1.2,-0.4) circle (1.2pt);
\draw[fill]   (1.6,-0.4) circle (1.2pt);
\draw[fill]   (2,-0.4) circle (1.2pt);
\draw[fill]   (2.4,-0.4) circle (1.2pt);
\draw[fill]   (2.8,-0.4) circle (1.2pt);
\draw[fill]   (3.2,-0.4) circle (1.2pt);
\draw[fill]   (3.6,-0.4) circle (1.2pt);
\draw[fill]   (4,-0.4) circle (1.2pt);
\draw[fill]   (4.4,-0.4) circle (1.2pt);
\draw[fill]   (4.8,-0.4) circle (1.2pt);
\draw[fill]   (-0.4,-0.4) circle (1.2pt);
\draw[fill]   (-0.8,-0.4) circle (1.2pt);
\draw[fill]   (-1.2,-0.4) circle (1.2pt);
\draw[fill]   (-1.6,-0.4) circle (1.2pt);
\draw[fill]   (-2,-0.4) circle (1.2pt);
\draw[fill]   (-2.4,-0.4) circle (1.2pt);
\draw[fill]   (-2.8,-0.4) circle (1.2pt);
\draw[fill]   (-3.2,-0.4) circle (1.2pt);
\draw[fill]   (-3.6,-0.4) circle (1.2pt);
\draw[fill]   (-4,-0.4) circle (1.2pt);
\draw[fill]   (-4.4,-0.4) circle (1.2pt);
\draw[fill]   (-4.8,-0.4) circle (1.2pt);
\draw[fill]   (0,-0.8) circle (1.2pt);
\draw[fill]   (0.4,-0.8) circle (1.2pt);
\draw[fill]   (0.8,-0.8) circle (1.2pt);
\draw[fill]   (1.2,-0.8) circle (1.2pt);
\draw[fill]   (1.6,-0.8) circle (1.2pt);
\draw[fill]   (2,-0.8) circle (1.2pt);
\draw[fill]   (2.4,-0.8) circle (1.2pt);
\draw[fill]   (2.8,-0.8) circle (1.2pt);
\draw[fill]   (3.2,-0.8) circle (1.2pt);
\draw[fill]   (3.6,-0.8) circle (1.2pt);
\draw[fill]   (4,-0.8) circle (1.2pt);
\draw[fill]   (4.4,-0.8) circle (1.2pt);
\draw[fill]   (4.8,-0.8) circle (1.2pt);
\draw[fill]   (-0.4,-0.8) circle (1.2pt);
\draw[fill]   (-0.8,-0.8) circle (1.2pt);
\draw[fill]   (-1.2,-0.8) circle (1.2pt);
\draw[fill]   (-1.6,-0.8) circle (1.2pt);
\draw[fill]   (-2,-0.8) circle (1.2pt);
\draw[fill]   (-2.4,-0.8) circle (1.2pt);
\draw[fill]   (-2.8,-0.8) circle (1.2pt);
\draw[fill]   (-3.2,-0.8) circle (1.2pt);
\draw[fill]   (-3.6,-0.8) circle (1.2pt);
\draw[fill]   (-4,-0.8) circle (1.2pt);
\draw[fill]   (-4.4,-0.8) circle (1.2pt);
\draw[fill]   (-4.8,-0.8) circle (1.2pt);
\draw[fill]   (0,0.8) circle (1.2pt);
\draw[fill]   (0.4,0.8) circle (1.2pt);
\draw[fill]   (0.8,0.8) circle (1.2pt);
\draw[fill]   (1.2,0.8) circle (1.2pt);
\draw[fill]   (1.6,0.8) circle (1.2pt);
\draw[fill]   (2,0.8) circle (1.2pt);
\draw[fill]   (2.4,0.8) circle (1.2pt);
\draw[fill]   (2.8,0.8) circle (1.2pt);
\draw[fill]   (3.2,0.8) circle (1.2pt);
\draw[fill]   (3.6,0.8) circle (1.2pt);
\draw[fill]   (4,0.8) circle (1.2pt);
\draw[fill]   (4.4,0.8) circle (1.2pt);
\draw[fill]   (4.8,0.8) circle (1.2pt);
\draw[fill]   (-0.4,0.8) circle (1.2pt);
\draw[fill]   (-0.8,0.8) circle (1.2pt);
\draw[fill]   (-1.2,0.8) circle (1.2pt);
\draw[fill]   (-1.6,0.8) circle (1.2pt);
\draw[fill]   (-2,0.8) circle (1.2pt);
\draw[fill]   (-2.4,0.8) circle (1.2pt);
\draw[fill]   (-2.8,0.8) circle (1.2pt);
\draw[fill]   (-3.2,0.8) circle (1.2pt);
\draw[fill]   (-3.6,0.8) circle (1.2pt);
\draw[fill]   (-4,0.8) circle (1.2pt);
\draw[fill]   (-4.4,0.8) circle (1.2pt);
\draw[fill]   (-4.8,0.8) circle (1.2pt);
\draw[fill]   (0,1.2) circle (1.2pt);
\draw[fill]   (0.4,1.2) circle (1.2pt);
\draw[fill]   (0.8,1.2) circle (1.2pt);
\draw[fill]   (1.2,1.2) circle (1.2pt);
\draw[fill]   (1.6,1.2) circle (1.2pt);
\draw[fill]   (2,1.2) circle (1.2pt);
\draw[fill]   (2.4,1.2) circle (1.2pt);
\draw[fill]   (2.8,1.2) circle (1.2pt);
\draw[fill]   (3.2,1.2) circle (1.2pt);
\draw[fill]   (3.6,1.2) circle (1.2pt);
\draw[fill]   (4,1.2) circle (1.2pt);
\draw[fill]   (4.4,1.2) circle (1.2pt);
\draw[fill]   (4.8,1.2) circle (1.2pt);
\draw[fill]   (-0.4,1.2) circle (1.2pt);
\draw[fill]   (-0.8,1.2) circle (1.2pt);
\draw[fill]   (-1.2,1.2) circle (1.2pt);
\draw[fill]   (-1.6,1.2) circle (1.2pt);
\draw[fill]   (-2,1.2) circle (1.2pt);
\draw[fill]   (-2.4,1.2) circle (1.2pt);
\draw[fill]   (-2.8,1.2) circle (1.2pt);
\draw[fill]   (-3.2,1.2) circle (1.2pt);
\draw[fill]   (-3.6,1.2) circle (1.2pt);
\draw[fill]   (-4,1.2) circle (1.2pt);
\draw[fill]   (-4.4,1.2) circle (1.2pt);
\draw[fill]   (-4.8,1.2) circle (1.2pt);
\draw[fill]   (0,-1.2) circle (1.2pt);
\draw[fill]   (0.4,-1.2) circle (1.2pt);
\draw[fill]   (0.8,-1.2) circle (1.2pt);
\draw[fill]   (1.2,-1.2) circle (1.2pt);
\draw[fill]   (1.6,-1.2) circle (1.2pt);
\draw[fill]   (2,-1.2) circle (1.2pt);
\draw[fill]   (2.4,-1.2) circle (1.2pt);
\draw[fill]   (2.8,-1.2) circle (1.2pt);
\draw[fill]   (3.2,-1.2) circle (1.2pt);
\draw[fill]   (3.6,-1.2) circle (1.2pt);
\draw[fill]   (4,-1.2) circle (1.2pt);
\draw[fill]   (4.4,-1.2) circle (1.2pt);
\draw[fill]   (4.8,-1.2) circle (1.2pt);
\draw[fill]   (-0.4,-1.2) circle (1.2pt);
\draw[fill]   (-0.8,-1.2) circle (1.2pt);
\draw[fill]   (-1.2,-1.2) circle (1.2pt);
\draw[fill]   (-1.6,-1.2) circle (1.2pt);
\draw[fill]   (-2,-1.2) circle (1.2pt);
\draw[fill]   (-2.4,-1.2) circle (1.2pt);
\draw[fill]   (-2.8,-1.2) circle (1.2pt);
\draw[fill]   (-3.2,-1.2) circle (1.2pt);
\draw[fill]   (-3.6,-1.2) circle (1.2pt);
\draw[fill]   (-4,-1.2) circle (1.2pt);
\draw[fill]   (-4.4,-1.2) circle (1.2pt);
\draw[fill]   (-4.8,-1.2) circle (1.2pt);
\draw (-6.25,1.8) node[anchor=north west] {$T(R^*)$};
\draw (-6.3,-.6) node[anchor=north west] {$B(R^*)$};
\draw (5.5,.5) node[anchor=north west] {\Large $R^*$};
\draw [line width=1pt] (-.8,1.2) -- (-0.8,.8);
\draw [line width=1pt] (-1.2,.8) -- (-0.8,.8);
\draw [line width=1pt] (-1.2,.8) -- (-1.2,.4);
\draw [line width=1pt] (-.8,.4) -- (-1.2,.4);
\draw [line width=1pt] (-.8,.4) -- (-.4,.4);
\draw [line width=1pt] (0,.4) -- (-.4,.4);
\draw [line width=1pt] (0,.4) -- (0,0);
\draw [line width=1pt] (0.4,0) -- (0,0);
\draw [line width=1pt] (0.4,0) -- (0.8,0);
\draw [line width=1pt] (0.8,-0.4) -- (0.8,0);
\draw [line width=1pt] (0.8,-0.4) -- (0.8,-.8);
\draw [line width=1pt] (1.2,-0.8) -- (0.8,-.8);
\draw [line width=1pt] (1.2,-0.8) -- (1.2,-.4);
\draw [line width=1pt] (1.6,-0.4) -- (1.2,-.4);
\draw [line width=1pt] (1.6,-0.4) -- (1.6,-.8);
\draw [line width=1pt] (1.6,-1.2) -- (1.6,-.8);
\end{tikzpicture}
\caption{A configuration $\om^*$ is good transversally if it has a path of open edges connecting $T(R^*)$ with $B(R^*)$.} \label{figure2}
\end{figure}

We say that a configuration $\om\in\llav{0,1}^{E(R)}$ is good lengthwise if its dual configuration $\om^*\in\llav{0,1}^{E(R)^*}$ does not have any path of open edges connecting $T(R^*)$ with $B(R^*)$; in this case, we say $\om^*$ is bad transversally.
Observe that a sufficient condition for a configuration $\om\in\llav{0,1}^{E(\Delta_K)}$ to have a circuit is that, for every $1\le i\le 4$, the projection $\om_{E(R_i)}$ is good lengthwise.
We have
\begin{equation}
B_\rho (\{\om\in \llav{0,1}^{E(\Delta_K)}:\om\textnormal{ has a circuit}\})
 \geq 
\pare{ B_\rho (\{\om\in \llav{0,1}^{E(R)}:\om \textnormal{ is good lengthwise}\})}^4;
\end{equation}
in the last inequality, we used the fact that the event $\llav{\om\in\llav{0,1}^{E(R)}:\omega \textnormal{ is good lengthwise}}$ is increasing and that the Bernoulli probability satisfies the FKG property;
see \cite{grimett}.
To estimate the probability of the last set, we consider its complement:
\begin{equation}
\llav{\om\in \llav{0,1}^{E(R)}:\om^* \textnormal{ is good transversaly}}.
\end{equation}
Observe that, if $\om^*$ is good transversally, there exists a self-avoiding path $\gamma$ of open edges starting in $B(R^*)$ and such that $\abs{\gamma}=\lfloor\sqrt{K}\rfloor$, where $\lfloor\sqrt{K}\rfloor$ denotes the integer part of $\sqrt{K}$ and $\abs{\gamma}$ the number of edges of $\gamma$.
Then
\begin{align}
&B_\rho(\{\om\in \llav{0,1}^{E(R)}:\om^* \textnormal{ is good transversaly}\})\nonumber
\\[0.3cm]
& \hspace{1cm}
\leq  B_\rho\Bigg(\bigcup_{x^*\in B(R^*)} \ \bigcup_{\substack{ \gamma\textnormal{ starting at }x^* \\[0.05cm] \abs{\gamma}=\lfloor\sqrt{ K}\rfloor }}\llav{\om\in \llav{0,1}^{E(R)}:\om^*_{e^*}=1 \textnormal{ for every }e^*\in\gamma   }\Bigg)
\\[0.3cm]
& \hspace{1cm}\leq
\sum_{x^*\in B(R^*)} \ \sum_{\substack{ \gamma\textnormal{ starting at }x^* \\[0.05cm] \abs{\gamma}=\lfloor\sqrt{ K }\rfloor }}
B_\rho(\{\om\in \llav{0,1}^{E(R)}:\om^*_{e^*}=1 \textnormal{ for every }e^*\in\gamma   \})
\\[0.3cm]
& \hspace{1cm}\leq
K 3^{\lfloor\sqrt{K}\rfloor}
(1-\rho)^{\lfloor\sqrt{K}\rfloor}.
\end{align}
We conclude that
\begin{align}
&B_\rho(\{\om\in \llav{0,1}^{E(C)}:\om\textnormal{ does not have a circuit}\})
\leq
1-\pare{1-K 3^{\lfloor\sqrt{K}\rfloor}
(1-\rho)^{\lfloor\sqrt{K}\rfloor}}^4
\\[0.3cm]
& \hspace{1cm}
\leq 1-\pare{1-8K\corch{3(1-\rho)}^{\lfloor\sqrt{K}\rfloor}}=
8K\corch{3(1-\rho)}^{\lfloor\sqrt{K}\rfloor}.
\end{align}
Coming back to \eqref{rr44}, we obtain the upper bound
$\corch{C_1 \pare{8K\corch{3(1-\rho)}^{\lfloor\sqrt{K}\rfloor}}^\delta}^{N_{K,L}}$.
Condition $\beta>\log\sqrt 5$ is equivalent to $3(1-\rho)<1$.
Take $K$ large enough to satisfy $\pare{8K\corch{3(1-\rho)}^{\lfloor\sqrt{K}\rfloor}}^\delta<1$ to conclude. \qed

\bigskip

\subsection{Proof of \eqref{lb}.}\label{denominator}

Given $\eta\in I_{|C_{l}|}$, if $|\eta| < m_\beta$
we choose $p<1$ such that $\eta=pm_{\beta}-(1-p)m_{\beta}$.
Supposing that $\Delta_L=[0,L)^d$ we split it as: $\Delta_L=\Delta_L^+\cup\Delta\cup \Delta_L^-$, where
\begin{equation}\label{Delta}
\Delta \defi \{
x=(x_1,\ldots, x_d) \in \Delta_L:\, |x_1-p L|\leq c^*
\}
\end{equation}
for an appropriate $c^*>0$ to be chosen next.
The set $\Delta_L^+$ (respectively $\Delta_L^-$) is the part of the domain corresponding to
smaller (respectively larger) values of $x_1$.
With this definition, letting $\tau_1=+$ and $\tau_2=-$,
there is a $c>0$ such that
\begin{equation}\label{muplus}
\mu^{\nn}_{\Delta_L^{\tau_i},0,\tau_{i}}\Big(
|\sum_{x\in \Delta_L^{\tau_i}}(\sigma(x)- \tau_i m_{\beta})|\leq c |\Delta_L^{\tau_i}|^{1/2}
\Big)
> \frac 12.
\end{equation}
We express the set in \eqref{muplus} by the union (over all possible magnetizations $m_i$, $i=1,2$,
with $|m_i-\tau_i m_{\beta}|\leq c |\Delta_L^{\tau_i}|^{-1/2}$) of the set 
$\{\sum_{x\in \Delta_L^{\tau_i}}\sigma(x)=m_i | \Delta_L^{\tau_i}|\}$. Then,
it follows that there are two values $m^{*}_i$, $i=1,2$, of the magnetization
with the above constraint so that
\begin{equation}\label{question}
\mu^{\nn}_{\Delta_L^{\tau_i},0,\tau_{i}}\Big(
\sum_{x\in \Delta_L^{\tau_i}}\sigma(x)=m^{*}_i | \Delta_L^{\tau_i}| 
\Big)
\geq \frac 12 \frac{2}{c | \Delta_L^{\tau_i}|^{1/2}}.
\end{equation}
With this, we define the set
\begin{equation}\label{G}
\mathcal G_\eta=\{
\sigma:\, \sum_{x\in \Delta_L^{\tau_i}}\sigma (x)=m^{*}_i |\Delta_L |, \, i=1,2,\,
\sum_{x\in \Delta}\sigma_{\Lambda}(x)=b|\Delta|
\},
\end{equation}
with $b$ such that $m^{*}_1| \Delta_L^+ |+m^{*}_2 | \Delta_L^- | + b|\Delta|= \eta |\Delta_L |$.
It is easy to see that such a $b$ exists for a $c^*$ in \eqref{Delta} large enough.

Thus, to get a lower bound to the left hand side of \eqref{lb} we restrict to $\mathcal G_\eta$.
As a consequence, in each subdomain the 
corresponding probabilities can be bounded as in \eqref{question}
and we are left with only
the boundary terms, which are of the order $L^{d-1}$
(for a box $\Delta_L=L^d$). We have
\begin{eqnarray}\label{tlb}
\mu^{\nn}_{\Delta_L, 0}(\{m_{\Delta_{L}}=\eta\}) & \geq & \mu^{\nn}_{\Delta_L, 0}(\mathcal G_\eta) 
\nonumber\\
& \geq &
\frac{1}{Z^{\nn}_{\Delta_L,0}}
e^{-2\beta J (2d+2) L^{d-1}}\times \nonumber\\
&&
\sum_{\sigma_{\Delta_L^+},\, \sigma_{\Delta_L^-},\, \sigma_{\Delta}}
\mathbf 1_{\mathcal G_\eta}
\,
e^{-\beta H_{\Delta_L^+}(\sigma_{\Delta_L^+} | \mathbf 1_{(\Delta_L^+)^c})}
e^{-\beta H_{\Delta_L^-}(\sigma_{\Delta_L^-} | \mathbf 1_{(\Delta_L^-)^c})}
e^{-\beta H_{\Delta}(\sigma_{\Delta})}
\nonumber\\
& \geq & 
\frac{1}{Z^{\nn}_{\Delta_L,0}}
e^{-2\beta J (2d+2) L^{d-1}}
e^{-\beta 2d J |\Delta|}
\prod_{i=1,2}\Big(\frac{1}{c |\Delta_L^{\tau_i}|^{1/2}}Z^{\nn}_{\Delta_L^{\tau_i},0,\tau_{i}}\Big).
\end{eqnarray}
Then we can easily conclude since
\begin{eqnarray}\label{conclude}
Z^{\nn}_{\Delta_L^+,0,+} Z^{\nn}_{\Delta_L^-,0,-} & \geq &
Z^{\nn}_{\Delta_L^+,0,+} Z^{\nn}_{\Delta_L^-,0,-} Z^{\nn}_{\Delta,0}
2^{-\Delta} e^{-\beta d J |\Delta|}\nonumber\\
& \geq &
Z^{\nn}_{\Delta_L,0} 
e^{-2\beta J (2d+2) L^{d-1}}
2^{-\Delta} e^{-\beta d J |\Delta|}.
\end{eqnarray}

If $|\eta|>m_\beta$, then we choose $h \defi h(\eta)$ as in discussion following
\eqref{mbeta}
and obtain:
\begin{equation}\label{foreta}
\mu^{\nn}_{\Delta_L,h(\eta)}\Big(|
\sum_{x\in \Delta_L} \sigma(x)
-\eta|\geq c |\Delta_L|^{1/2}
\Big)\leq \frac{1}{c^2 |\Delta_L|}\sum_{x,y}\E_{\mu^{\nn}_{\Delta_L,h(\eta)}}
[|(\sigma(x)-\eta)(\sigma(y)-\eta)|]
\leq \frac 12
\end{equation}
for an appropriate choice of $c$.
Following the steps above, \eqref{foreta} implies \eqref{lb} for the case $|\eta|>m_\beta$
and concludes the proof. \qed

\bigskip

{\bf Acknowledgements.}
It is a great pleasure to thank Errico Presutti for suggesting us the problem and for his continuous advising.
We also acknowledge fruitful suggestions from Roman Koteck\'y at an earlier version of the paper.
This work was initiated when NSL and DT were visiting GSSI. 
NSL also acknowledges the hospitality of the University of Sussex.
The visit of NSL to GSSI was partially supported by grant 2010-MINCYT-NIO Interacting stochastic systems: fluctuations, hydrodynamics, scaling limits;
the visit of NSL to the University of Sussex was partially supported by 
grant 14-­STIC-­03 Evaluation and Optimal Control of High-­dimensional stochastic network - ECHOS.
NSL was also supported by a scholarship from CONICET-Argentina during the course of his PhD.

\bigskip

\end{document}